\documentclass[11pt]{article}

\usepackage{amsmath,amssymb,amsfonts}
\usepackage{bm}
\usepackage{amsthm}
\usepackage{graphicx}
\usepackage{booktabs}
\usepackage{float}
\usepackage{xcolor}
\usepackage{xurl}
\usepackage[left=0.7in,right=0.7in,top=0.8in,bottom=0.8in]{geometry}
\usepackage{tikz}
\usetikzlibrary{arrows.meta,positioning,calc}

\usepackage{subcaption}

\usepackage{algorithm}
\usepackage{algorithmic}

\usepackage[hidelinks]{hyperref}

\graphicspath{{pic/}}

\DeclareGraphicsExtensions{.pdf,.png,.jpg,.jpeg}

\theoremstyle{plain}
\newtheorem{theorem}{Theorem}

\newtheorem{proposition}{Proposition}

\theoremstyle{definition}
\newtheorem{definition}{Definition}

\theoremstyle{remark}

\begin{document}
	
\title{Algebraic Invariant Quadratization Schemes for Cahn--Hilliard Equations}
\author{Fei Xie$^{1}$ \quad Nan Lu$^{2}$ \quad
Yajuan Sun$^{1}$\thanks{The work of the third author is supported by the National Natural Science Foundation of China (Grant No. 12271513).}\\[0.75em]
\small $^{1}$State Key Laboratory of Mathematical Sciences, Academy of Mathematics and Systems Science,\\
\small Chinese Academy of Sciences, Beijing 100190, China; University of Chinese Academy of Sciences, Beijing 100049, China\\
\small $^{2}$Department of Mathematics, Shanghai Normal University, Shanghai 200234, China\\[0.25em]
\small \texttt{xiefei2021@lsec.cc.ac.cn} \quad
\texttt{lunan@shnu.edu.cn} \quad
\texttt{sunyj@lsec.cc.ac.cn}}
\date{}
\maketitle
	
\begin{abstract}
In this paper, we propose the Algebraic Invariant Quadratization (AIQ) framework for rational-like energy functions by introducing auxiliary variables, which are interpreted as Casimir functions of the extended system. Combining AIQ with symplectic Runge--Kutta (SRK) methods in time and Fourier pseudo-spectral  discretization in space, we obtain fully discrete schemes. The resulting schemes are applied to Cahn--Hilliard equations in both the isotropic and anisotropic cases. We analyze the discrete dispersion relation, spinodal instability, coarsening behavior, and missing-orientation phenomena. Numerical comparisons demonstrate the improved performance superiority of the proposed method over the stabilized invariant energy quadratization (S-IEQ) and scalar auxiliary variable (SAV) methods in preserving the original energy evolution and capturing the underlying physical phenomena.

\end{abstract}

\noindent\textbf{Keywords:} Cahn--Hilliard equation; anisotropic phase-field model; energy-stable scheme; dispersion relation; coarsening rate.

\medskip
\noindent\textbf{MSC 2020:} 65P10; 65L05; 65M12.
	
\section{Introduction}
For many physical systems, preserving the intrinsic geometric structures is essential for accurately capturing their qualitative behavior. These structures may include symplecticity, mass conservation, momentum conservation, Casimir invariants, and energy conservation or dissipation. Typical examples arise from Hamiltonian systems in celestial mechanics, plasma and fluid models with geometric invariants, and biological or phase-field models governed by dissipative dynamics. In systems where the dynamics are constrained by energy surfaces or driven by energy dissipation, the energy law plays an important role in stability and long-time evolution. A numerical method that fails to respect this structure may introduce artificial energy drift and lead to unreliable long-time predictions. Therefore, constructing energy-preserving or energy-dissipative schemes is a central issue in structure-preserving numerical methods.

Various numerical methods have been developed to preserve energy structures, including the discrete variational derivative method~\cite{Furihata2010DVD}, the average vector field (AVF) method~\cite{Celledoni2012}, and discrete-gradient-based integral-preserving frameworks for PDEs~\cite{DahlbyOwren2011}. These methods usually require more complicated constructions and may involve numerical quadrature. Recently, auxiliary-variable methods such as IEQ~\cite{Yang2017IEQ} and SAV~\cite{Shen2018SAV} have provided more systematic and efficient reformulation strategies and have been widely used for gradient flows. These methods usually require less detailed structural analysis and are easier to construct and implement. They are effective and flexible. However, they usually preserve a modified energy rather than the original energy itself, which may lead to energy errors or drift and affect the accuracy of long-time physical simulations.

To maintain consistency with the original energy law and improve the reliability of long-time simulations, recent studies have focused on auxiliary-variable formulations that preserve the original energy structure. For Hamiltonian systems with polynomial first integrals, the multiple quadratic auxiliary variable (MQAV) method was introduced in \cite{Tapley2022} to preserve the original invariants. More recently, for systems reformulated in linear-gradient form, we establish a dimension-raising framework \cite{Lu2025}. In this framework, the original system is embedded into an extended space through quadraticized auxiliary variables which can be identified as Casimir functions of the extended dynamics. Therefore, for the extended system.

Preserving the modified quadratic energy alone is not sufficient to recover the original energy law. The Casimir constraints must also be preserved. Indeed, the preservation of these constraints ensures that the numerical solution remains on the constraint manifold, where the extended formulation is equivalent to the original one. This observation also explains why standard IEQ and SAV methods do not automatically guarantee consistency with the original energy.

Motivated by this idea, we further study systems with rational-like invariants and establish an Algebraic Invariant Quadratization (AIQ) framework. The key point is to construct an extended formulation with two quadratic structures: a quadratic energy for the extended system and quadratic auxiliary constraints. We	prove that rational-like invariants can be quadratized by introducing quadratic auxiliary variables, which provides the theoretical foundation for the AIQ construction. We combine the AIQ framework with SRK time discretizations and Fourier pseudo-spectral spatial discretizations. The resulting fully discrete schemes preserve both the quadratic energy of the extended system and the Casimir constraints. Thereby recovering the original energy law on the constraint manifold.

We use Cahn-Hillirad (CH) equations as model problems to validate the proposed AIQ framework, The CH equations are fundamental phase-field models for phase separation and interface-driven pattern formation in materials science~\cite{Boettinger2002,cahn1958}. They can be formulated as an \(H^{-1}\) gradient flows associated with a Ginzburg--Landau free energy and have two basic structural properties: mass conservation and energy dissipation~\cite{Wu2022Review}. These features make CH-type equations a natural class of models for applying and testing original-energy-consistent structure-preserving methods. In many applications, the surface energy gives rise to direction-dependent surface tension and equilibrium shapes characterized by the Wulff construction~\cite{CahnHoffman1974,Wulff1901}. 

In such models, the anisotropic factors often lead to more complicated energy densities involving polynomial, rational, and algebraic branch functions, which makes anisotropic CH equations particularly suitable for the AIQ framework. In this paper, we apply AIQ to both isotropic and anisotropic CH models. The rational-like energy terms can be rewritten as quadratic functions in an extended space, and the original energy law is recovered through the preservation of auxiliary constraints. We prove the corresponding original discrete energy-dissipation laws at the fully discrete level. Numerical experiments verify the temporal accuracy, dispersion behavior, coarsening dynamics, and missing-orientation phenomena, and show improved original-energy consistency compared with S-IEQ and SAV.

The outline of this paper is as follows. Section~2 presents the AIQ	framework for quadratizing rational-like invariants and verifies the structure-preserving property of the extended system. Section~3 develops	fully discrete schemes for isotropic and anisotropic CH equations and proves their discrete energy-dissipation laws and discrete mass conservation. Section~4 examines the discrete dispersion relation and related physical phenomena, including spinodal instability, coarsening behavior, and missing orientations. Section~5 verifies the temporal accuracy, compares the original-energy behavior with existing auxiliary-variable methods, and investigates the evolution dynamics from different initial conditions. Finally, Section~6 concludes the paper with a brief summary and several directions for future work.	
	
\section{Quadratization-based numerical methods}
We begin this section by introducing the definition of rational-like functions.
\begin{definition}[Rational-like Function]\label{defn:rational}
	A function \(H(x)\) defined on \(\mathbb R^n\) is called a rational-like function if there exists a nonzero polynomial of degree $d$ in $n+1$ variables $P(x,y)=\sum_{i=0}^d p_i(x)\,y^i$, such that for all $x\in\mathbb{R}^n$,
	\[
	P\bigl(x,\;H(x)\bigr)
	\equiv 0.
	\]
\end{definition}
Rational-like functions form a subclass of algebraic functions characterized by the above structure, which facilitates energy reformulation.

\begin{definition}[Dimension-raising Transformation]
	Suppose \(G(x,y):\mathbb{R}^{n+d} \to \mathbb{R}^d\) is a continuously differentiable function, and assume that
	\(\partial_yG(x,y)\in\mathbb{R}^{d\times d}\) is nonsingular whenever \(G(x,y)=0\).
	For a given \(H:\mathbb{R}^n \to \mathbb{R}\), \(G\) is called a dimension-raising transformation if \(H\) can be embedded into a higher-dimensional function
	\(\widetilde H:\mathbb{R}^{n+d} \to \mathbb{R}\) such that
	\begin{equation}\label{eq:restrict}
		\widetilde H(x,y)=H(x),\qquad
		(x,y)\in\mathcal M:=\{(x,y)\in\mathbb{R}^{n+d}:G(x,y)=0\}.
	\end{equation}
\end{definition}

A function \(H:\mathbb{R}^n\to\mathbb{R}\) is called quadraticizable if there exist an integer \(d\ge1\), a constraint map \(G=(G_1,\ldots,G_d)^\top:\mathbb{R}^{n+d}\to\mathbb{R}^d\), and a quadratic function \(\widetilde H:\mathbb{R}^{n+d}\to\mathbb{R}\) satisfying  \eqref{eq:restrict}, where each component \(G_i\) is a quadratic function of \((x,y)\). In other words, \(H\) can be represented as the restriction of a quadratic function in a higher-dimensional space under quadratic constraints. This construction is called quadratic dimension raising. We next show that every rational-like function is quadraticizable.
\begin{theorem}\label{thm:quar}
	Assume that \(H:\mathbb R^n\to\mathbb R\) is a rational-like function. Then \(H\) is quadraticizable.
\end{theorem}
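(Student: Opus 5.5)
We build the auxiliary variables from the monomials of $x$ and $H$ that occur in $P$, and let the relation $P(x,H(x))\equiv 0$ itself become a quadratic constraint. Write $P(x,y)=\sum_{i=0}^d p_i(x)y^i$. If $p_d\equiv 0$ we lower $d$, so assume $p_d\not\equiv 0$. Every term of $P$ is a monomial $x^\alpha y^i$. Collect all monomials in $(x,y)$ of total degree at most $\deg P$, and introduce one auxiliary variable for each monomial of degree $\ge 2$. These are built by a chain of quadratic relations: every monomial $m$ of degree $\ge2$ factors as $m=m'\cdot v$, where $v$ is a single variable $x_j$ or $y$ and $m'$ has lower degree. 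The new variable $z_m$ then satisfies the quadratic constraint
\[
z_m - z_{m'}\,v = 0 .
\]
In this chain, degree-one monomials are the original coordinates $x_j$, and $y$ itself is an auxiliary variable standing for $H$.

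On the constraint set, every $z_m$ equals the corresponding monomial evaluated at $(x,y)$. The remaining step is to pin $y$ to the value $H(x)$, which is where $P$ enters. Because $P$ is a linear combination of monomials, the relation $P(x,y)=0$ becomes a \emph{linear} (hence quadratic) constraint
\[
G_0:=\sum_{\alpha,i} c_{\alpha i}\, z_{x^\alpha y^i}=0 .
\]
Finally we take $\widetilde H(x,y,z):=y$, which is linear and so a fortiori quadratic. On $\mathcal M$ it equals $H(x)$ as soon as $y$ is forced to equal $H(x)$.

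The main obstacle is that $P(x,\cdot)=0$ generally has several roots, so $\mathcal M$ contains points with $y$ equal to another root $\ne H(x)$. Moreover, the definition demands that $\partial_y G$ be nonsingular on $\mathcal M$ and that the number of constraints equal the number of new variables $d$. The counting is fine by construction: one constraint per auxiliary variable, plus $G_0$ paired with $y$. The Jacobian of the chain constraints with respect to the $z$'s is triangular with unit diagonal, so nonsingularity reduces to
\[
\partial_y P(x,y)\neq 0 \quad \text{on } \mathcal M,
\]
that is, to $H$ being a simple root.

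I would handle this as follows. First, replace $P$ by a square-free factor that is irreducible in $y$ and still annihilates $H$. Then $\partial_yP$ does not vanish identically along the graph of $H$; it can vanish only on a discriminant set of $x$. Second, restrict the constraint manifold to a neighborhood of the graph of $H$, which is the branch selected by the initial data. This restriction is legitimate for the numerical schemes, because the Casimirs $G$ are preserved and the trajectory stays on the branch where it starts. Equivalently, $\mathcal M$ is read as the connected component of $\{G=0\}$ containing the graph.

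If a fully global statement is required instead, one can add one more auxiliary variable $w$ with the quadratic constraint $w\,z_{\partial_yP}-1=0$, where $z_{\partial_yP}$ is the linear combination of monomial variables representing $\partial_yP$. This enforces $\partial_yP\ne0$ algebraically, though it does not by itself exclude other simple roots. That branch-selection issue is the step I expect to need the most care, and I would state the result on the branch component of $\mathcal M$.
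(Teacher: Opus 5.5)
Your route differs from the paper's. The paper never uses $P$. It splits $H$ into elementary operations: pairing the factors of monomials, $xy-1=0$ for $1/x$, and a binary-exponent chain for $x^{1/q}$. Outside the root gadget, each auxiliary variable is then a product or reciprocal of earlier quantities and is uniquely determined by $x$. You instead quadratize the relation $P(x,y)=0$ directly, with $y$ an auxiliary variable and $\widetilde H=y$. Three parts of your argument are correct: the monomial chain, the count of auxiliary variables against constraints, and the reduction of $\det\partial_{(y,z)}G$ to $\partial_yP$ through the unit-triangular $z$-block. When $P(x,\cdot)$ has exactly one real root for every $x$, and that root is simple, your argument is complete. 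Examples are $P$ linear in $y$ together with your $w$-constraint, or $P=y^5+y-x$. The root of the latter is not expressible by radicals, so your route reaches functions the paper's decomposition cannot.

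The gap is the branch step you flag, and neither of your remedies closes it. The identity $\widetilde H=H$ must hold on all of $\mathcal M=\{G=0\}$. In your construction $\mathcal M$ is the lift of the whole real set $\{P=0\}$, so it contains every other real branch. That set can also be singular on the graph of $H$ even when $H$ is analytic and $P$ is irreducible.

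Take $H(x)=x\sqrt{1+x^2}$ with minimal polynomial $P=y^2-x^2(1+x^2)$. The branches $y=\pm H(x)$ cross at the node $(0,0)$, where $\partial_yP=0$. Three things then go wrong:
\begin{itemize}
\item your $\partial_{(y,z)}G$ is singular at a point of the graph;
\item every neighborhood of that point, and hence the connected component containing the graph, meets the lift of $y=-H(x)$, on which $\widetilde H=-H$;
\item the constraint $w\,z_{\partial_yP}=1$ deletes the graph point over $x=0$ but keeps the spurious branch.
\end{itemize}

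Yet this $H$ is quadraticizable. With auxiliary variables $(s,w)$, $G=(s^2-1-x^2,\;w^2-s)$ and $\widetilde H=xs$, the zero set is exactly $s=\sqrt{1+x^2}$, $w=\pm(1+x^2)^{1/4}$, and $\det\partial_{(s,w)}G=4sw\neq0$ there. So the missing ingredient has two parts. First, you need intermediate algebraic variables defined forward, as in the paper's decomposition, so that $\{G=0\}$ gains neither foreign branches nor singularities that $H$ lacks. Second, you need sign constraints written as squares (here $w^2=s$) to select the branch. The paper's own even-$q$ root gadget leaves $u_0=\pm x^{1/q}$ and needs the same sign fix.

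There is also a smaller issue. Picking a factor of $P$ that vanishes identically along the graph tacitly requires regularity of $H$, e.g.\ analyticity; $H=|x|$ with $y^2-x^2$ switches factors. As written, you prove only the branchwise statement you announce at the end, not the theorem.
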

\begin{proof}
	Since the rational-like functions are generated by finitely many algebraic operations, we verify the claim for the four elementary cases.
	
	Firstly, assume that $H(x)$ is a monomial of $n$-variables with even degree $d=2k$. By relabeling each occurrence of a variable as a distinct symbol, we write every repeated factor $x_i$ as $x_{\alpha_j}$ with a new index $\alpha_j$.
	Then $H(x)$ can be expressed by
	\[
	H(x)=x_{\alpha_1}x_{\alpha_2}\cdots x_{\alpha_{2k-1}}x_{\alpha_{2k}},
	\]
	where $\alpha_{i}, i=1,\cdots, 2k$ take values from 1 to $2k$. Let $y_{i}=x_{\alpha_{2i-1}}\,x_{\alpha_{2i}}$, we can define a dimension-raising transformation whose \(i\)-th component is \(G_i(x,y)=y_{i}-x_{\alpha_{2i-1}}\,x_{\alpha_{2i}}\), which is quadratic. Then \(\tilde{H}(x,y)=y_1 \cdots y_k\) agrees with \(H(x)\) on the constraint manifold and has degree \(k\) in the auxiliary variables. If \(k>1\), we apply the same procedure to the product \(\prod_{i=1}^k y_i\).
	
	Secondly, considers a monomial of odd degree $d=2k+1$, namely,
	\[
	H(x)=x_{\alpha_1}\cdots x_{\alpha_{2k}}\,x_{\alpha_{2k+1}}
	=\Bigl(\prod_{j=1}^{2k}x_{\alpha_j}\Bigr)\,x_{\alpha_{2k+1}}.
	\]
	The first term is a monomial of  degree $2k$. Thus, using the same auxiliary variable as in the first case, we can define  $\tilde{H}(x,y)=H(x)=\Bigl(\prod_{i=1}^ky_i\Bigr)\,x_{\alpha_{2k+1}}$.
	After finitely many steps, the original monomial is represented by a quadratic extended function subject to quadratic constraints, and hence $H$ can be quadraticized.
	
	In the third case, suppose \(H(x)\) is a function with a rational power. If \(H(x)=1/x\), introducing an auxiliary variable \(y=1/x\) gives \(\tilde{H}(x, y)=y\). The corresponding dimension-raising quadratic function is \(G(x,y) = xy - 1\). 
	
	%
	In the last case, consider a rational power $H(x)=x^{1/q}$ with $q\in\mathbb{N}$, $q\ge2$. From the binary expansion we have $q=\sum_{i=0}^{s} b_i 2^i$. We introduce auxiliary variable $u_j$, $v_j$, $i=0,1,\cdots s$ intend to satisfy $u_0^q=x$. 
	Let \(q=\sum_{j=0}^{s} b_j2^j\) be the binary representation of \(q\), with	\(b_j\in\{0,1\}\). We introduce auxiliary variables \(u_i\) and \(v_i\) by \(u_0=x^{1/q}\), \(v_0=1\), and
	\[
	u_{i+1}=u_i^2,\qquad v_{i+1}=v_i u_i^{b_i},\qquad i=0,\ldots,s-1.
	\]
	Then \(x=v_su_s^{b_s}\), and the construction involves only quadratic relations	between the auxiliary variables.
	Once $u_0=x^{1/q}$ is enforced by the quadratic constraints, the general case $x^{p/q}=(u_0)^p$ follows, and the monomial construction applies to $(u_0)^p$. This concludes the proof.
\end{proof}
For example, we consider the fourfold anisotropy function
\[
\Gamma(x_1,x_2)
=
1-3\alpha+4\alpha
\frac{x_1^4+x_2^4}{(x_1^2+x_2^2)^2}.
\]
According to Definition~\ref{defn:rational}, \(\Gamma\) is a rational-like function. By Theorem~\ref{thm:quar}, it is quadraticizable. More explicitly, introducing the auxiliary variables
\begin{equation}\label{eg:fourfold}
	y_1=x_1^2,\quad
	y_2=x_2^2,\quad
	y_3=(y_1+y_2)^2,\quad
	y_4=y_3^{-1},\quad
	y_5=y_1^2+y_2^2,\quad
	y_6=y_4y_5,
\end{equation}
we obtain \(\widetilde\Gamma(y_1,y_2,y_3,y_4,y_5,y_6)=1-3\alpha+4\alpha y_6\), which is a linear function of the auxiliary variable \(y_6\) on the constraint set.

Consider an ODE system of the form
\begin{align}\label{eq:ODE}
\dot x= f(x).
\end{align}
If the system has a conservative or dissipative structure, then it can be rewritten as
\begin{align}\label{eq:gradientODE}
\dot x=\mathbb{M}(x)\nabla H(x)
\end{align}
where \(\mathbb M(x)\) is skew-symmetric or negative semidefinite\cite{McLachlanQuispelRobidoux1999}.


For the system~\eqref{eq:ODE}, Theorem~\ref{thm:quar} shows that if the quantity \(H(x)\) is rational-like, then \(H\) can be represented by a quadratic function \(\widetilde H(x,y)\) in an extended space, together with a constraint map \(G(x,y)\) whose components are quadratic. However, the dimension-raising transformation \(G(x,y)\) is not unique. If the \(y\)-Jacobian \(\partial_yG(x,y)\) is nonsingular on the constraint manifold, then the implicit function theorem implies that there exists a smooth function \(\varphi\) such that 
\[ G\bigl(x,\varphi(x)\bigr)\equiv 0. \] 
Thus the auxiliary variables can be written as \(y=\varphi(x)\) on the constraint manifold.

Introduce the homeomorphic  map $j:\mathbb{R}^n\to \mathcal{M}, j(x)=(x,\varphi(x))$. It is clear that
\[
\tilde H(x,y)\big|_\mathcal{M}
= \tilde H\bigl(j(x)\bigr)
= H(x).
\]
In what follows, we demonstrate that this extended system not only exactly preserves the original invariants but also has a variety of geometric and algebraic properties.

Suppose each component \(G_i(x,y)\) of the dimension-raising transformation is a homogeneous quadratic polynomial. Let \(z=(x^\top,y^\top)^\top\in\mathbb{R}^{n+d}\). Then \(G_i(x,y)=\frac12\,z^\top Q^i z,\; i=1,\ldots,d,\) where \(Q^i\in\mathbb{R}^{(n+d)\times(n+d)}\) is symmetric and can be partitioned as
\[
Q^i=\begin{pmatrix}
Q_{11}^i & Q_{12}^i\\
(Q_{12}^i)^\top & Q_{22}^i
\end{pmatrix}.
\]
The Jacobian $DG(x,y)\in\mathbb{R}^{d\times(n+d)}$ is given by
\[
\begin{aligned}
D G&=
\begin{pmatrix}
	\nabla G_1^\top \\
	\vdots  \\[0.75ex]
	\nabla G_d^\top\\
\end{pmatrix}
&=\left(\underbrace{
	\begin{array}{c}
		x^\top Q_{11}^1 + y^\top Q_{12}^{1\,\top} \\[0.75ex]
		\vdots \\[0.75ex]
		x^\top Q_{11}^d + y^\top Q_{12}^{d\,\top}
	\end{array}
}_{A(x,y)}
\;\Bigg|\;
\underbrace{
	\begin{array}{c}
		x^\top Q_{12}^1 + y^\top Q_{22}^1 \\[0.75ex]
		\vdots \\[0.75ex]
		x^\top Q_{12}^d + y^\top Q_{22}^d
	\end{array}
}_{B(x,y)}\!\right).
\end{aligned}
\]
For simplicity of the matrix representation, we assume that \(B(x,y)\) has constant full rank on the considered domain. In a more general implicit formulation, this assumption is not necessary. Under the assumption, the Moore--Penrose inverse \(B^\dagger(x,y)=
\bigl(B(x,y)^\top B(x,y)\bigr)^{-1}B(x,y)^\top\) is well defined and smooth along the relevant trajectories.

For a given system~\eqref{eq:ODE}, the corresponding extended system is
\begin{equation}\label{eq:extendODE}
\begin{pmatrix}
	\dot{x}\\
	\dot{y}
\end{pmatrix}
=F(x,y):=
\begin{pmatrix}
	\tilde f(x,y)\\[0.2em]
	-\,B^\dagger(x,y)\,A(x,y)\,\tilde f(x,y)
\end{pmatrix},
\end{equation}

Assume that $\mathcal{M}$ can be represented as a graph $y=\varphi(x)$. Then the restriction of $F$ to $\mathcal{M}$ is $F(x,y)\big|_{\mathcal{M}}=F(x,\varphi(x))=F(j(x))$. Since the $x$-component of the extended system~\eqref{eq:extendODE} is $\dot x=\tilde f(x,y)$, consistency with the original system $\dot x=f(x)$ on $\mathcal{M}$ implies
\begin{equation}\label{eq:dotx}
\tilde f\bigl(j(x)\bigr)=f(x).
\end{equation}
Moreover, along trajectories on $\mathcal{M}$ we have $y(t)=\varphi(x(t))$, and the chain rule gives $\dot y(t)=\varphi'\bigl(x(t)\bigr)\,\dot x(t)=\varphi'\bigl(x(t)\bigr)\,f\bigl(x(t)\bigr)$. On the other hand, the $y$-component of system~\eqref{eq:extendODE} is $\dot y=-B^\dagger(x,y)\,N(x,y)\,\tilde f(x,y)$, and therefore, on $\mathcal{M}$,
\begin{equation}\label{eq:doty}
-\,B^\dagger\bigl(j(x)\bigr)\,N\bigl(j(x)\bigr)\,f(x)
=\varphi'(x)\,f(x).
\end{equation}
Combining Eq.~\eqref{eq:dotx} and Eq.~\eqref{eq:doty} yields
\begin{equation}\label{eq:F_restriction}
F(j(x))=D(j(x))\,f(x)
=
\begin{pmatrix}
	I\\
	\varphi'(x)
\end{pmatrix}f(x).
\end{equation}
Here $D(j(x))$ denotes the Jacobian matrix of $j(x)$.

A scalar function $C(x)$ is called a Casimir invariant associated with
the structure matrix $\mathbb{M}(x)$ if its gradient lies pointwise in the null space of $\mathbb{M}(x)$, i.e., \(\mathbb{M}(x)\nabla C(x)=0\). Thus, a Casimir invariant is determined by the null space of the structure matrix, and is therefore preserved along the corresponding flow.

According to our result in \cite{Lu2025}, the constraint components \(G_i(x,y)\) can be interpreted as Casimir invariants of the extended system. Hence, to recover the original energy law, one must preserve not only the extended energy \(\widetilde H(x,y)\), but also the Casimir constraints \(G(x,y)=0\). This keeps the numerical solution on the constraint manifold.

\begin{proposition}
	The extended system~\eqref{eq:extendODE} is equivalent to the original system. Moreover, it inherits the same conservative (or dissipative) property as the original system.
\end{proposition}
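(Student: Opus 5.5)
The plan is to prove the two claims separately: equivalence first, then inheritance of the conservative or dissipative structure. Throughout I read the matrix $N$ in \eqref{eq:doty} as the block $A$ of $DG$.

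\textbf{Step 1: $G$ is invariant under the extended flow.} For a solution $(x(t),y(t))$ of \eqref{eq:extendODE} I will compute
\[
\frac{d}{dt}G(x,y)=A\dot x+B\dot y=A\tilde f-BB^\dagger A\tilde f .
\]
This vanishes provided $BB^\dagger=I_d$, which I will justify from the standing assumption that $B$ is a nonsingular $d\times d$ block ($\partial_yG$ is nonsingular on $\mathcal M$), so that $B^\dagger=B^{-1}$. This is the main obstacle. The written formula $B^\dagger=(B^\top B)^{-1}B^\top$ is in general only a left inverse. The identity $BB^\dagger=I$ needs $B$ to have full row rank, and for square $B$ this coincides with invertibility. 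I will state this explicitly and use it.

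With this, each $G_i$ is a first integral of \eqref{eq:extendODE}, consistent with the Casimir interpretation from \cite{Lu2025}. Hence solutions starting on $\mathcal M$ stay on $\mathcal M$.

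\textbf{Step 2: equivalence.} Take initial data $(x_0,\varphi(x_0))\in\mathcal M$. By Step 1 the solution satisfies $G(x(t),y(t))=0$. By the implicit function theorem (uniqueness of the local branch, extended by continuity along the trajectory), $y(t)=\varphi(x(t))$. Then \eqref{eq:dotx} gives $\dot x=\tilde f(j(x))=f(x)$, so the $x$-component solves \eqref{eq:ODE}.

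Conversely, if $x(t)$ solves \eqref{eq:ODE}, then $j(x(t))$ solves \eqref{eq:extendODE}. This follows from \eqref{eq:F_restriction}, since $\frac{d}{dt}j(x)=Dj(x)\,f(x)=F(j(x))$. By uniqueness of ODE solutions the two flows correspond under the homeomorphism $j$, which is the claimed equivalence.

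\textbf{Step 3: the energy law.} Along extended trajectories starting on $\mathcal M$, I will use $\widetilde H(j(x))=H(x)$ and Step 2:
\[
\frac{d}{dt}\widetilde H(x,y)=\frac{d}{dt}H(x(t))=\nabla H(x)^\top\mathbb M(x)\nabla H(x).
\]
This is $0$ if $\mathbb M$ is skew-symmetric and $\le 0$ if $\mathbb M$ is negative semidefinite.

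To express this structurally, I will also indicate an extended structure matrix $\widetilde{\mathbb M}=Dj\,\mathbb M\,Dj^\top$, evaluated on $\mathcal M$. Then $F=\widetilde{\mathbb M}\nabla\widetilde H$ there, because $Dj^\top\nabla\widetilde H=\nabla(\widetilde H\circ j)=\nabla H$. Moreover $\widetilde{\mathbb M}$ inherits skew-symmetry or negative semidefiniteness from $\mathbb M$.

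The $G_i$ are Casimirs of $\widetilde{\mathbb M}$, since $Dj^\top\nabla G_i=\nabla(G_i\circ j)=0$. This completes the inheritance of the conservative or dissipative property.
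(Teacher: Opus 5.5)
Your proof is correct, and its energy step coincides with the paper's. The paper's argument is the identity \eqref{eq:verify_pre}, $\nabla\widetilde H^\top F|_{\mathcal M}=\nabla H^\top f$, which is exactly your chain rule through $\widetilde H\circ j=H$. Beyond that step, the paper cites \cite{Lu2025} for equivalence on $\mathcal M$ and only asserts $\frac{d}{dt}G=0$, whereas you derive both.

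Your key observation is that $B=\partial_yG$ is square, so $(B^\top B)^{-1}B^\top=B^{-1}$, and this is what makes $\frac{d}{dt}G=(I-BB^\dagger)A\tilde f$ vanish. This holds wherever $F$ is defined at all, since the formula for $B^\dagger$ already forces $B$ to be invertible. That is what makes $G$ a true first integral rather than merely tangent to $\mathcal M$. Invertibility ``on $\mathcal M$'' alone is therefore not quite the right hypothesis to quote.

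Your ordering also avoids a circularity in the paper, which obtains \eqref{eq:doty} by assuming that extended trajectories already remain on $\mathcal M$. More directly, differentiating $G(x,\varphi(x))\equiv0$ gives $A+B\varphi'=0$, hence $\varphi'=-B^{-1}A$. This justifies \eqref{eq:F_restriction} outright and makes your converse via ODE uniqueness self-contained; it even yields invariance of $\mathcal M$ without Step 1.

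The congruence $\widetilde{\mathbb M}=Dj\,\mathbb M\,Dj^\top$ is a genuine addition. It makes the Casimir interpretation explicit, though only on $\mathcal M$, which suffices here.

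The paper's version is shorter because it outsources these structural facts. Yours is a complete proof from the stated assumptions.
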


\begin{proof}
	Let \(F(x,y)\) denote the vector field of the extended system. On the constraint manifold \(\mathcal{M}\), \(\tilde H(x,y)\) coincides with the original energy \(H(x)\), and the extended dynamics reduces to the original one in the \(x\)-component. Hence, by construction,
	\begin{equation}\label{eq:verify_pre}
		\nabla \tilde H(x,y)^\top F(x,y)\big|_{\mathcal{M}}
		=
		\nabla H(x)^\top f(x).
	\end{equation}
	Therefore,
	\[
	\nabla \tilde H(x,y)^\top F(x,y)\big|_{\mathcal{M}}
	=
	\begin{cases}
		0, & \text{if the original system is conservative},\\[0.4em]
		\le 0, & \text{if the original system is dissipative}.
	\end{cases}
	\]
	Hence, the extended system inherits the same conservation or dissipation property on \(\mathcal{M}\).
	
	The equivalence between the extended system and the original system on the constraint manifold follows from~\cite{Lu2025}. Also, \(G\) is conserved along the extended flow, i.e.,
	\[
	\frac{d}{dt}G(x(t),y(t))=0.
	\]
\end{proof}

An $s$-stage symplectic Runge--Kutta (SRK) method~\cite{Hairer2006} has coefficients $(a_{ij},b_i)$ satisfying
\begin{align}\label{eq:srk-symplectic-condition}
b_i a_{ij}+b_j a_{ji}-b_i b_j=0,
\qquad 1\le i,j\le s .
\end{align}
This condition is essential for preserving quadratic invariants under Runge--Kutta discretizations. At the discrete level, the key point is that SRK methods preserve quadratic invariants. We prove below that this property applies to both the extended quadratic energy and the quadratic Casimir constraints in the extended system.
\begin{proposition}
	Applying an SRK method to the extended system~\eqref{eq:extendODE} 	yields a structure-preserving discretization that exactly preserves all quadratic invariants. If the initial data satisfy the constraint $G(x_0,y_0)=0$, the numerical solution remains on the constraint manifold for all time steps. Eliminating the auxiliary variables via the constraint then produces a method that preserves the original invariants of the original system. Moreover, if the SRK method is of order $p$, so is the reduced integrator.
\end{proposition}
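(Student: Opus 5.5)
The plan is to reduce everything to the classical fact that Runge--Kutta methods whose coefficients satisfy \eqref{eq:srk-symplectic-condition} preserve every quadratic first integral of the vector field they are applied to. First I check that the extended energy $\widetilde H$ (conservative case) and each constraint component $G_i$ are quadratic invariants of the extended field $F$. For $G_i$ this is immediate from \eqref{eq:extendODE}: $\nabla G_i^\top F = A\tilde f - B B^\dagger A\tilde f$. Under the standing assumption that $B$ has full rank with $d$ rows (square and nonsingular), $BB^\dagger=I$, so the expression vanishes identically, not just on $\mathcal M$. This is exactly the Casimir property quoted from \cite{Lu2025}. For $\widetilde H$ I use the preceding Proposition: on $\mathcal M$ one has $\nabla\widetilde H^\top F = \nabla H^\top f = 0$.

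Next comes the standard computation. For a quadratic $Q(z)=\tfrac12 z^\top S z+c^\top z$ with $\nabla Q(z)^\top F(z)=0$, write $z_1=z_0+h\sum_i b_i k_i$ with $k_i=F(Z_i)$ and $Z_i=z_0+h\sum_j a_{ij}k_j$. Expanding gives
\[
Q(z_1)-Q(z_0)=h\sum_i b_i\nabla Q(Z_i)^\top k_i - h^2\sum_{i,j}(b_ia_{ij}+b_ja_{ji}-b_ib_j)\,\tfrac12 k_i^\top S k_j ,
\]
and both terms vanish. Applied to each $G_i$, this gives $G(x_{n+1},y_{n+1})=G(x_n,y_n)=\dots=G(x_0,y_0)=0$ by induction. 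Since $G_i$ is conserved on all of $\mathbb R^{n+d}$, the stage values do not need to lie on $\mathcal M$.

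\textbf{Main obstacle.} The energy is conserved by $F$ only on $\mathcal M$, while the RK stages $Z_i$ generally leave $\mathcal M$, so the first term above is not obviously zero. I would handle this in one of two ways. The first is to use the gradient structure: write the extended system as $\dot z=\widetilde{\mathbb M}(z)\nabla\widetilde H(z)$ with $\widetilde{\mathbb M}$ skew (or negative semidefinite) off $\mathcal M$ as well, which is the construction of \cite{Lu2025}. Then $\nabla\widetilde H(Z_i)^\top k_i=0$ (resp. $\le0$) holds at every stage, and this yields $\widetilde H(z_1)=\widetilde H(z_0)$ (resp. $\le$ in the dissipative case, where the $h^2$ term still cancels). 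The second, failing that, is to state the energy claim for the extended quadratic invariant understood as a global first integral of $F$.

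Once $z_n\in\mathcal M$ for all $n$, I would write $y_n=\varphi(x_n)$ via the implicit function theorem. This gives $H(x_n)=\widetilde H(j(x_n))=\widetilde H(z_n)=\widetilde H(z_0)=H(x_0)$, so the reduced map $x_n\mapsto x_{n+1}$ preserves (or dissipates) the original invariant.

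For the order, the reduced integrator is $\Phi_h(x)=\pi\,\Psi_h(j(x))$, where $\Psi_h$ is the SRK map and $\pi$ is the projection onto $x$. The exact extended flow starting at $j(x)$ stays on $\mathcal M$, and by \eqref{eq:F_restriction} its $x$-component is the exact flow of $\dot x=f(x)$. Since $\Psi_h(j(x))-\phi^F_h(j(x))=O(h^{p+1})$ with $F$ smooth near $\mathcal M$, projecting gives a local error of $O(h^{p+1})$ for $\Phi_h$. Standard stability of one-step maps then gives global order $p$.
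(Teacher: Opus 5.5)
Your proof has the same skeleton as the paper's: SRK conserves quadratic first integrals, so $G$ and $\widetilde H$ are conserved; the implicit function theorem gives $y_n=\varphi(x_n)$; hence $H(x_n)=H(x_0)$ and order $p$. It is more careful at the two points where the paper's three-line argument is loose.

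First, the paper deduces $\widetilde H(x_{n+1},y_{n+1})=\widetilde H(x_n,y_n)$ directly from ``SRK preserves quadratic invariants''. But the preceding proposition only gives $\nabla\widetilde H^\top F=0$ on $\mathcal M$, and the stage values leave $\mathcal M$. For the midpoint rule, $G_i$ at the stage equals $-\tfrac18(z_{n+1}-z_n)^\top Q^i(z_{n+1}-z_n)$, which is generally nonzero. Two ingredients of yours repair this: the check that $DG\,F=A\tilde f-BB^\dagger A\tilde f=0$ wherever $B$ is invertible, and the explicitly stated extra hypothesis $F=\widetilde{\mathbb M}\nabla\widetilde H$ with $\widetilde{\mathbb M}$ skew off $\mathcal M$. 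Together they make the stagewise identity $\nabla\widetilde H(Z_i)^\top k_i=0$ hold. This is also the mechanism the paper itself uses later for the CH schemes, where $(\mu_i,K_i)_h$ is evaluated at the stages.

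Second, the paper's reduced integrator inserts $\varphi(X_j)$ into the stages. Since $\tilde f(X_j,\varphi(X_j))=f(X_j)$, that is just the SRK method applied to $\dot x=f(x)$. The extended stages satisfy $Y_j\neq\varphi(X_j)$ in general, so this is not the map the extended scheme computes, and it generally does not conserve a non-quadratic $H$. Your reduced map $\Phi_h=\pi\circ\Psi_h\circ j$ is the right object. Your local-error projection argument, that the exact extended flow from $j(x)$ stays on $\mathcal M$ and projects onto the flow of $f$, together with standard one-step stability, proves order $p$ for it.

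Two minor corrections:
\begin{itemize}
\item $G$ is conserved on the open set where $B$ is invertible, which contains the stages for small $h$, not on all of $\mathbb R^{n+d}$.
\item In the dissipative case you need $b_i\ge0$ to conclude $\widetilde H(z_1)\le\widetilde H(z_0)$.
\end{itemize}
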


\begin{proof}
	SRK methods preserve quadratic invariants; hence
	\[
	G(x_{n+1},y_{n+1})=G(x_n,y_n),\quad \widetilde H(x_{n+1},y_{n+1})=\widetilde H(x_n,y_n).
	\]
	Due to the implicit function theorem, it is easy to get \(y=\varphi(x)\) at the grid point from \(G(x,y)=0\). Substituting this relation into the SRK discretization of the extended system yields a reduced integrator for the original system:
	\[
	\begin{aligned}
		&X_i=x_n+h\sum_{j=1}^s a_{ij}\widetilde f(X_j,\varphi(X_j)),
		\qquad i=1,\dots,s,	\\
		&x_{n+1}=x_n+h\sum_{i=1}^s b_i\widetilde f(X_i,\varphi(X_i)).
	\end{aligned}
	\]
	The numerical algorithm has	the same order as the underlying SRK method.
\end{proof}
The above dimension-raising idea can also be extended to infinite-dimensional systems, where the energy is described by a functional. Let $\Omega\subset\mathbb R^d$ be a periodic domain. We consider $u$ in a sufficiently smooth Sobolev space so that all quantities below are well defined.

The energy functional is given by
\begin{equation}\label{eq:H_pde}
\mathcal{H}[u]=\int_{\Omega} H\bigl(\operatorname{Pr}^{(m)}u\bigr)\,dx,
\end{equation}
where $\operatorname{Pr}^{(m)}u$ denotes $u$ and its derivatives up to order $m$. We consider evolution equations of the form
\begin{equation}\label{eq:ori_pde}
u_t=\mathcal D\frac{\delta\mathcal H}{\delta u},
\end{equation}
which satisfy the formal energy identity \(\frac{d}{dt}\mathcal H=\left(\mathcal D\frac{\delta\mathcal H}{\delta u},\frac{\delta\mathcal H}{\delta u}\right)\), where $\mathcal D$ is skew-adjoint in the conservative case and negative semidefinite in the dissipative case.

\begin{definition}
	The energy functional $\mathcal H[u]$ is called quadratic if its density is a quadratic form in $\operatorname{Pr}^{(m)}u$, i.e.,
	\[
	H(\operatorname{Pr}^{(m)}u)=\frac12\bigl(\operatorname{Pr}^{(m)}u\bigr)^\top A\,\operatorname{Pr}^{(m)}u,
	\]
	where $A$ is a symmetric constant matrix.
\end{definition}
For such functionals the SRK method inherits a precise structure--preserving property.
\begin{theorem}
	Assume that \(\mathcal H[u]\) is quadratic and there exists a symmetric linear operator \(S\) such that
	\[
	\frac{\delta \mathcal H}{\delta u}=Su.
	\]
	Let an SRK method with coefficients \((a_{ij},b_i)\) be	applied to system~\eqref{eq:ori_pde}. Then the fully discrete solution satisfies
	\[
	\mathcal H[u_{n+1}]-\mathcal H[u_n]
	=
	\Delta t\sum_{i=1}^s b_i
	\bigl(\mathcal D S U_i,\,S U_i\bigr) \begin{cases} =0, & \text{if } \mathcal D \text{ is skew-adjoint},\\[0.4em] \le 0, & \text{if } \mathcal D \text{ is negative semidefinite}, \end{cases}
	\]
	where \(U_i\) are the SRK stage values.
\end{theorem}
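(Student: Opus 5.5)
Since $\mathcal H$ is quadratic with $\delta\mathcal H/\delta u=Su$ and $S$ symmetric, I will write $\mathcal H[u]=\tfrac12(Su,u)$ in the $L^2(\Omega)$ inner product on the periodic domain. This follows by integrating the density $\tfrac12(\operatorname{Pr}^{(m)}u)^\top A\,\operatorname{Pr}^{(m)}u$ by parts; there are no boundary terms because of periodicity, and this identifies $S$ as the associated self-adjoint differential operator. The SRK scheme applied to \eqref{eq:ori_pde} reads
\[
U_i=u_n+\Delta t\sum_{j=1}^s a_{ij}K_j,\qquad u_{n+1}=u_n+\Delta t\sum_{i=1}^s b_iK_i,\qquad K_i=\mathcal D S U_i .
\]
The plan is to mimic the classical Cooper argument for preservation of quadratic invariants, while keeping track of the non-vanishing term.

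First I expand the bilinear form using the symmetry of $S$:
\[
\mathcal H[u_{n+1}]-\mathcal H[u_n]=\Delta t\sum_i b_i(Su_n,K_i)+\tfrac{\Delta t^2}{2}\sum_{i,j}b_ib_j(SK_i,K_j).
\]
Next I eliminate $u_n$ via $u_n=U_i-\Delta t\sum_j a_{ij}K_j$, which gives
\[
(Su_n,K_i)=(SU_i,K_i)-\Delta t\sum_j a_{ij}(SK_j,K_i).
\]
Substituting, the $\Delta t^2$ terms collect into
\[
-\tfrac{\Delta t^2}{2}\sum_{i,j}\bigl(b_ia_{ij}+b_ja_{ji}-b_ib_j\bigr)(SK_i,K_j).
\]
To obtain this I symmetrize the sum $\sum_{i,j}b_ia_{ij}(SK_j,K_i)$ using $(SK_j,K_i)=(SK_i,K_j)$. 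By the symplecticity condition \eqref{eq:srk-symplectic-condition}, this whole term vanishes.

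What remains is $\Delta t\sum_i b_i(SU_i,K_i)=\Delta t\sum_i b_i(\mathcal D SU_i,SU_i)$, which is the claimed identity. For the sign, skew-adjointness of $\mathcal D$ gives $(\mathcal D w,w)=0$ for each $w=SU_i$, so the sum is zero. In the dissipative case, $(\mathcal D w,w)\le0$, and the sum is $\le0$ provided every $b_i\ge0$.

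The main obstacle is this last point. The statement does not explicitly assume $b_i\ge 0$, so I will note that the standard SRK methods used (Gauss--Legendre collocation) have positive weights, and add that hypothesis in the dissipative case. A secondary subtlety is justifying that $S$ is symmetric as an operator on the relevant space and that all pairings are finite. This follows from the smoothness assumption on $u$ and from the periodic integration by parts, so I would treat it briefly.
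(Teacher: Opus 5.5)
Your proposal is correct and follows essentially the same route as the paper: with $K_i=\mathcal D SU_i$ and $\mathcal H[u]=\tfrac12(u,Su)$, expand $\mathcal H[u_{n+1}]-\mathcal H[u_n]$, eliminate $u_n$ through the stage equations, and use the symplecticity condition \eqref{eq:srk-symplectic-condition} to cancel the $\Delta t^2$ terms, leaving $\Delta t\sum_i b_i(\mathcal DSU_i,SU_i)$. You write out the Cooper-type algebra that the paper compresses into one line, and your extra hypothesis $b_i\ge0$ in the dissipative case is also what the paper's proof invokes, although the theorem statement omits it.
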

\begin{proof}
	Let \(K_i=\mathcal D S U_i\). Then
	\[
	u_{n+1} = u_n + \Delta t \sum_{i=1}^s b_i K_i, \qquad
	U_i = u_n + \Delta t \sum_{j=1}^s a_{ij} K_j.
	\]	
	For the quadratic functional \(\mathcal H[u] = \frac12(u,Su)\),
	\[
	\mathcal H[u_{n+1}] - \mathcal H[u_n]
	= \Delta t \sum_{i=1}^s b_i (K_i, S U_i)
	= \Delta t \sum_{i=1}^s b_i (\mathcal D S U_i, S U_i),
	\]
	where the last equality uses the SRK symplectic condition \eqref{eq:srk-symplectic-condition}.
	
	If $\mathcal D$ is skew-adjoint, $(\mathcal D S U_i, S U_i)=0$. If $\mathcal D$ is	negative semidefinite and $b_i \ge 0$, then $(\mathcal D S U_i, S U_i) \le 0$ for each	stage $i$. This completes the proof.
\end{proof}
\begin{theorem} 
	Assume that system~\eqref{eq:ori_pde} admits a rational-like energy density. Then there exist auxiliary variables \(\mathbf y(x)\in\mathbb R^r\) and dimension-raising constraints 
	\[ 
	G_i\bigl(\operatorname{Pr}^{(n)}u,\mathbf y\bigr)=0, \qquad i=1,\ldots,r, 
	\] 
	where \(G_i\) is quadratic with respect to \((\operatorname{Pr}^{(n)}u,\mathbf y)\), and the system can be embedded into an extended space \((\operatorname{Pr}^{(n)}u,\mathbf y)\) with a quadratic energy functional 
	\[ 
	\widetilde{\mathcal H}[u,\mathbf y] = \int_{\Omega} \widetilde H\bigl(\operatorname{Pr}^{(n)}u,\mathbf y\bigr)\,dx, 
	\] 
	where \(\widetilde H\) is quadratic. Moreover, the equivalence \( \widetilde{\mathcal H}[u,\mathbf y]=\mathcal H[u] \) holds on the constraint manifold defined by \(G_i=0\), \(i=1,\ldots,r\). \end{theorem}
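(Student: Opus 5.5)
The plan is to reduce the statement to the finite-dimensional Theorem~\ref{thm:quar}, applied pointwise in $x\in\Omega$. Write $p=\operatorname{Pr}^{(n)}u(x)\in\mathbb R^N$ for the jet of $u$ at a point, where $N$ counts $u$ and all partial derivatives up to order $n$ (take $n\ge m$). The energy density $H(p)$ is, by assumption, a rational-like function of the finitely many real variables $p$ in the sense of Definition~\ref{defn:rational}. The key observation is that Theorem~\ref{thm:quar} never uses that its arguments are independent functions of anything. It only manipulates $H$ as an algebraic expression in $n$ real symbols. So I would apply it with the symbols $p_1,\dots,p_N$ to obtain three things:
\begin{itemize}
\item an integer $r$;
\item quadratic polynomials $G_i(p,y)$, $i=1,\dots,r$;
\item a quadratic $\widetilde H(p,y)$,
\end{itemize}
such that $\widetilde H(p,y)=H(p)$ whenever $G(p,y)=0$. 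Crucially, none of these depend on $x$.

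Next I would promote the algebraic variables to fields by setting $\mathbf y(x):=\varphi(\operatorname{Pr}^{(n)}u(x))$. Here $\varphi$ is the branch produced by the implicit function theorem. It is available because $\partial_yG$ is nonsingular on $\mathcal M$, as required in the definition of a dimension-raising transformation. Concretely, $\varphi$ is given by the explicit recursive choices in the proof of Theorem~\ref{thm:quar}: products for monomials, $y=1/x$ for reciprocals, and the binary-expansion chain for $x^{1/q}$. Then $G_i(\operatorname{Pr}^{(n)}u(x),\mathbf y(x))=0$ holds for every $x$.

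Define $\widetilde{\mathcal H}[u,\mathbf y]=\int_\Omega \widetilde H(\operatorname{Pr}^{(n)}u,\mathbf y)\,dx$. This is quadratic in the extended variables $(\operatorname{Pr}^{(n)}u,\mathbf y)$ in the sense of the preceding definition, with a constant symmetric matrix. On the constraint manifold the integrands agree pointwise, so integrating the pointwise identity gives $\widetilde{\mathcal H}[u,\mathbf y]=\mathcal H[u]$.

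The remaining steps are bookkeeping:
\begin{itemize}
\item Check that $\widetilde H$ may contain a linear or constant part, as in $1-3\alpha+4\alpha y_6$ for the fourfold anisotropy example. Such terms can be homogenized by adjoining a constant auxiliary variable $y_0$ with the quadratic constraint $y_0^2-1=0$ if strict homogeneity is needed.
\item Check that the integrals are well defined, using the standing smoothness assumption on $u$.
\end{itemize}

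The main obstacle is well-definedness of $\mathbf y$ as a function of $x$. Rational-like densities may have singularities, for example the denominator $(u_x^2+u_y^2)^2$ in $\Gamma$, and algebraic branches such as $x^{1/q}$. I would first restrict to the open set where all denominators introduced by the reciprocal steps are nonzero and where the chosen real branch is smooth. There I would argue that $\mathbf y$ inherits the regularity of $\operatorname{Pr}^{(n)}u$. Where this fails, I would treat the constraint manifold as defined only on this admissible set, consistent with the paper's assumption that $B$ has full rank on the considered domain.
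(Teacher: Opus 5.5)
Your proposal is correct and follows the paper's own argument. The paper likewise treats the components of $\operatorname{Pr}^{(n)}u$ as independent algebraic symbols, applies the quadratization of Theorem~\ref{thm:quar} to the density pointwise, and implicitly integrates the resulting identity; your explicit choice $\mathbf y(x)=\varphi(\operatorname{Pr}^{(n)}u(x))$ and your treatment of branches and singular sets only fill in details the paper leaves unstated. One small caveat on your optional homogenization step: the constraint $y_0^2-1=0$ also admits the branch $y_0=-1$, on which $\widetilde H\neq H$ whenever $\widetilde H$ has linear terms. So either use the affine constraint $y_0-1=0$ (the paper already allows non-homogeneous constraints such as $xy-1$), or simply keep the non-homogeneous quadratic $\widetilde H$, which the SRK energy identity handles equally well.
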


\begin{proof}
	Denote $u_i$ be the \(i\)-th component of \(\operatorname{Pr}^{(n)}u\), then $\operatorname{Pr}^{(n)}u=(u_0,u_1,\cdots, u_n)$ with $u_0=u$. The rational-like density $H(\operatorname{Pr}^{(n)}u)$ has the same structure as in Theorem~\ref{thm:quar}. Therefore, we apply the same quadratization procedure to obtain a quadratic extended representation.
\end{proof}

This section has developed the Algebraic Invariant Quadratization (AIQ) framework. The main idea is to rewrite nonlinear rational-like energies as quadratic energies in an extended variable space by introducing suitable auxiliary variables whose constraints are quadratic Casimir invariant. Applying an SRK method to the extended system preserves these quadratic structures, and hence recovers the original conservative or dissipative energy law on the constraint manifold.

\section{Numerical discretization for CH systems}
We now apply the AIQ framework to CH equations using Fourier pseudo-spectral spatial discretization. The periodic setting provides the discrete summation-by-parts identities needed for the following mass-conservation and energy-dissipation estimates.

The Cahn--Hilliard (CH) equation \cite{cahn1958} describes the dynamics of phase separation, driven by a diffuse-interface free energy. The isotropic CH equation reads
\begin{equation}\label{eq:CH-iso}
\left\{
\begin{aligned}
	&u_t=\nabla\!\cdot\!\big(M\nabla\mu\big),
	\qquad (x,t)\in \Omega \times (0,T],\\
	&\mu=-\Delta u+\varepsilon^{-2}F'(u)+\beta\,\Delta^2 u,
\end{aligned}
\right.
\end{equation}
where \(0<\varepsilon\ll 1\) is the interfacial thickness, \(\beta\ge0\) is the regularization strength, and \(F(u)\) is the bulk potential. Its free energy functional is given by
\begin{equation}\label{eq:CH-iso-e}
E_{\mathrm{iso}}(u)
=\frac12(\nabla u,\nabla u)+\varepsilon^{-2}(F(u),1)+\frac{\beta}{2}(\Delta u,\Delta u),
\end{equation}
where \((\cdot,\cdot)\) denotes the \(L^2(\Omega)\) inner product. namely \((\phi,\psi)=\int_\Omega \phi(x)\psi(x)\,\mathrm{d}x\). It is known that
\begin{align}\label{eq:iso-EnDis}
\frac{\mathrm d}{\mathrm d t}E_{\mathrm{iso}}(u)
=\int_\Omega \mu\,u_t\,\mathrm d x
=\int_\Omega \mu\,\nabla\!\cdot\!\bigl(M\nabla\mu\bigr)\,\mathrm d x
=-M\int_\Omega |\nabla\mu|^2\,\mathrm d x\le 0
\end{align}
which indicates the system is dissipative. Under periodic boundary conditions, the total mass is conserved. Indeed,
\begin{align}\label{eq:iso-MaCon}
\frac{\mathrm d}{\mathrm dt}\int_\Omega u(x,t)\,\mathrm dx
=
\int_\Omega u_t\,\mathrm dx
=
\int_\Omega \nabla\!\cdot\!\bigl(M\nabla\mu\bigr)\,\mathrm dx
=
0,
\end{align}
where the last equality follows from the cancellation of periodic boundary fluxes.

For Eq.\eqref{eq:CH-iso}, we apply the AIQ method in time and the Fourier pseudo-spectral discretization in space. In this section, we show that the resulting numerical discretizations preserve the dissipative property of the CH equation.

Let $\Omega=[a_x,b_x]\times[a_y,b_y]$, and let $h_x$ and $h_y$ be the grid sizes in the $x$ and $y$ directions respectively.
Set $\Omega_h
=
\{(x_j,y_k)| x_j=a_x+jh_x,\;
y_k=a_y+kh_y\}$ and define $V_h=\{U|U=\{U_{jk}\},(x_j,y_k)\in\Omega_h \}$
as the space of grid functions on $\Omega_h$. Denote \(D_x\) and \(D_y\) as the first-order Fourier pseudo-spectral differentiation matrices \cite{Lu2024}. We use the discrete inner product \((U,V)_h=h_xh_y\sum_{j,k}U_{jk}V_{jk}\). For a nonnegative grid weight \(W\), we write \((A,B)_{h,W}:=(W\odot A,B)_h\); in particular, \((A,B)_{h,\Gamma}:=(\Gamma(\Theta)\odot A,B)_h\). On the periodic grid, the Fourier differentiation matrices satisfy the corresponding summation-by-parts identities and the discrete Laplacian has zero mean.
Taking \(M=1\) and $F(u)=\frac14(u^2-1)^2$, we have \(F'(u)=u^3-u\). Employing the Fourier pseudo-spectral method for system~\eqref{eq:CH-iso} in space gives
\begin{equation}\label{eq:CH-iso-semi-dis}
U_t
-\Delta_h\Bigl[
-\Delta_hU+\varepsilon^{-2}(U^{\odot 3}-U)+\beta\Delta_h^2U
\Bigr]=0,
\end{equation} 
for $U\in V_h$. Here, $\Delta_h$  denotes the discrete Laplacian which is  $\Delta_h U = D_x^2U + U(D_y^\top)^2$,  and \(\odot\) denotes the Hadamard product.\footnote{For any $U, V\in V_h$, the Hadamard product defined by $(U\odot V)_{ij}=U_{ij}V_{ij}$, $U^{\odot 2}=U\odot U$.}

By introducing the auxiliary variable \(q=\frac12(u^2-1)\), we apply an $s$-stage SRK method to system~\eqref{eq:CH-iso-semi-dis}. The fully discrete scheme is given by
\begin{equation}\label{fu-dis-scheme:CH-iso}
\left\{
\begin{aligned}
	U_i
	&=
	U^n+\Delta t\sum_{j=1}^s a_{ij}K_j,
	\qquad i=1,\dots,s,\\
	Q_i
	&=
	Q^n+\Delta t\sum_{j=1}^s a_{ij}L_j,
	\qquad i=1,\dots,s,\\
	K_i
	&=
	\Delta_h\mu_i,\\
	\mu_i
	&=
	-\Delta_h U_i
	+2\varepsilon^{-2}U_i\odot Q_i
	+\beta\Delta_h^2U_i,\\
	L_i
	&=
	U_i\odot K_i,\\
	U^{n+1}
	&=
	U^n+\Delta t\sum_{i=1}^s b_iK_i,\\
	Q^{n+1}
	&=
	Q^n+\Delta t\sum_{i=1}^s b_iL_i .
\end{aligned}
\right.
\end{equation}
Here $U_i,Q_i,\mu_i$ are the internal SRK stage values, and $K_i,L_i$
denote the corresponding stage derivatives. The coefficients $(a_{ij},b_i)$
satisfy the symplectic condition~\eqref{eq:srk-symplectic-condition}.

We set
\begin{equation}\label{eq:E-iso-h-rk}
\widetilde E_{\mathrm{iso},h}(U,Q)
=
-\frac12(\Delta_hU,U)_h
+\varepsilon^{-2}\|Q\|_h^2
+\frac{\beta}{2}\|\Delta_hU\|_h^2 .
\end{equation}
The original discrete energy is defined by
\begin{equation}\label{eq:E-iso-h}
E_{\mathrm{iso},h}(U)
=
-\frac12(\Delta_hU,U)_h
+\varepsilon^{-2}
\left\|
\frac12\bigl(U^{\odot2}-\mathbf1\bigr)
\right\|_h^2
+\frac{\beta}{2}\|\Delta_hU\|_h^2.
\end{equation}
If $b_i\ge 0$ for $i=1,\dots,s$, the scheme satisfies the discrete
energy dissipation law with respect to $\widetilde E_{\mathrm{iso},h}$.
Moreover, since the constraint \(Q=\frac12\bigl(U^{\odot2}-\mathbf1\bigr)\) is a quadratic invariant of the extended system, it is preserved by the SRK
discretization. Hence, on the constraint manifold \(\mathcal M_h=\{(U,Q)\in V_h \times V_h|Q_{jk}=\tfrac12(U_{jk}^2-1),\forall j,k\}\) we obtain
\[
\widetilde E_{\mathrm{iso},h}(U,Q)|_{\mathcal M_h}=E_{\mathrm{iso},h}(U).
\]
The discrete energy-dissipation law is as follows
\begin{proposition}\label{thm:CH-iso-e-dis}
	The numerical discretization~\eqref{fu-dis-scheme:CH-iso} satisfies the discrete energy-dissipation law:
	\[
		\frac{E_{\mathrm{iso},h}(U^{n+1})-E_{\mathrm{iso},h}(U^n)}{\Delta t}
		=
		-\sum_{i=1}^s b_i
		\left(
		\|D_x\mu_i\|_h^2+\|\mu_iD_y^\top\|_h^2
		\right).
	\]
	In particular, if \(b_i\ge0\) for \(i=1,\ldots,s\), then
	\[
	E_{\mathrm{iso},h}(U^{n+1})\le E_{\mathrm{iso},h}(U^n).
	\]
\end{proposition}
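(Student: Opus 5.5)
The argument has three parts: compute the change of the extended energy $\widetilde E_{\mathrm{iso},h}$ across one step using the quadratic-invariant property of SRK methods; show that the constraint $Q=\frac12(U^{\odot2}-\mathbf 1)$ is preserved exactly, so that $\widetilde E_{\mathrm{iso},h}$ agrees with $E_{\mathrm{iso},h}$ at both time levels; and finally rewrite the stage dissipation terms with discrete summation by parts.

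\textbf{Energy difference.} Write $\widetilde E_{\mathrm{iso},h}(U,Q)=\frac12(U,SU)_h+\varepsilon^{-2}(Q,Q)_h$ with $S=-\Delta_h+\beta\Delta_h^2$. The matrix $S$ is symmetric with respect to $(\cdot,\cdot)_h$, because the Fourier differentiation matrices satisfy $D^\top=-D$. Insert $U^{n+1}=U^n+\Delta t\sum_i b_iK_i$ and $Q^{n+1}=Q^n+\Delta t\sum_i b_iL_i$, and substitute $U^n=U_i-\Delta t\sum_j a_{ij}K_j$ (similarly for $Q^n$). The symplectic condition \eqref{eq:srk-symplectic-condition} cancels the $\Delta t^2$ terms, exactly as in the proof of the quadratic-functional theorem above. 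This leaves
\[
\widetilde E_{\mathrm{iso},h}^{n+1}-\widetilde E_{\mathrm{iso},h}^{n}=\Delta t\sum_i b_i\bigl[(K_i,SU_i)_h+2\varepsilon^{-2}(L_i,Q_i)_h\bigr].
\]
Since $L_i=U_i\odot K_i$, the Hadamard product can be moved across the pointwise inner product: $(U_i\odot K_i,Q_i)_h=(K_i,U_i\odot Q_i)_h$. The bracket therefore equals $(K_i,\mu_i)_h=(\Delta_h\mu_i,\mu_i)_h$.

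\textbf{Constraint preservation.} Set $G=Q-\frac12(U^{\odot2}-\mathbf 1)$ componentwise. For the extended stage vector field, $\dot G=L-U\odot K=0$ identically, so each component of $G$ is a quadratic invariant. Expanding $G^{n+1}-G^n$ the same way gives
\[
G^{n+1}-G^n=\Delta t\sum_i b_i\bigl(L_i-U_i\odot K_i\bigr)-\frac{\Delta t^2}{2}\sum_{i,j}\bigl(b_ib_j-b_ia_{ij}-b_ja_{ji}\bigr)K_i\odot K_j.
\]
The first sum vanishes by definition of $L_i$, and the second vanishes by \eqref{eq:srk-symplectic-condition}. Hence, with initial data $Q^0=\frac12(U^{0\,\odot2}-\mathbf 1)$, every step stays on $\mathcal M_h$. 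It follows that $\widetilde E_{\mathrm{iso},h}(U^n,Q^n)=E_{\mathrm{iso},h}(U^n)$ for all $n$.

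\textbf{Summation by parts.} Use $\Delta_h\mu=D_x^2\mu+\mu(D_y^\top)^2$ together with skew-symmetry, $(D_x^2\mu,\mu)_h=-\|D_x\mu\|_h^2$ and $(\mu (D_y^\top)^2,\mu)_h=-\|\mu D_y^\top\|_h^2$. Dividing by $\Delta t$ gives the stated identity, and the inequality follows immediately when $b_i\ge0$.

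The step needing most care is the first expansion. The $\Delta t^2$ cross terms must be grouped correctly, both in the energy computation and in the constraint computation, where the quadratic term $K_i\odot K_j$ has to be symmetrized. There is a second point: the identity is only exact if the implicit stage equations are solved exactly. The proposal simply assumes this, as the paper implicitly does.
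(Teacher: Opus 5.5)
Your proposal is correct and follows the same route as the paper. Both arguments use three ingredients:
\begin{itemize}
\item the SRK quadratic identity for $\widetilde E_{\mathrm{iso},h}$, with the Hadamard transfer $(Q_i,U_i\odot K_i)_h=(U_i\odot Q_i,K_i)_h$ producing $(\mu_i,\Delta_h\mu_i)_h$;
\item discrete summation by parts;
\item exact preservation of the constraint $Q=\frac12(U^{\odot2}-\mathbf 1)$ under consistent initialization, so that $\widetilde E_{\mathrm{iso},h}=E_{\mathrm{iso},h}$ at every step.
\end{itemize}
The only difference is one of detail. You carry out the $\Delta t^2$ cancellations explicitly, including for the constraint with its linear and constant parts. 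The paper instead cites the general fact that SRK methods preserve quadratic invariants.
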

\begin{proof}
	Since the Fourier pseudo-spectral Laplacian satisfies the discrete integration by parts formula, applying a symplectic Runge--Kutta method to this system gives
	\[
	\begin{aligned}
		&\widetilde E_{\mathrm{iso},h}(U^{n+1},Q^{n+1})
		-
		\widetilde E_{\mathrm{iso},h}(U^n,Q^n)  \\
		=&
		\Delta t\sum_{i=1}^s b_i
		\left[
		\bigl(
		-\Delta_h U_i+\beta\Delta_h^2U_i,K_i
		\bigr)_h
		+
		2\varepsilon^{-2}(Q_i,U_i\odot K_i)_h
		\right]  \\
		=&
		\Delta t\sum_{i=1}^s b_i
		\left[\bigl(
		-\Delta_h U_i
		+2\varepsilon^{-2}U_i\odot Q_i
		+\beta\Delta_h^2U_i,
		K_i
		\bigr)_h\right] \\
		=&
		\Delta t\sum_{i=1}^s b_i(\mu_i,\Delta_h\mu_i)_h .
	\end{aligned}
	\]
	Using again the discrete integration by parts formula yields \((\mu_i,\Delta_h\mu_i)_h=-\|D_x\mu_i\|_h^2-\|\mu_iD_y^\top\|_h^2\). Since \(b_i\ge0\), we obtain 
	\[
	\widetilde E_{\mathrm{iso},h}(U^{n+1},Q^{n+1})
	\le
	\widetilde E_{\mathrm{iso},h}(U^n,Q^n)
	\]
	It remains to connect the modified energy with the original one. For the quadratic constraint
	\(C(U,Q)=Q-\tfrac12(U^{\odot2}-\mathbf1)\), the stage equations give
	\(\dot C=L-U\odot K=0\). Since SRK methods satisfying condition~\eqref{eq:srk-symplectic-condition} preserve quadratic invariants, \(C(U^{n+1},Q^{n+1})=C(U^n,Q^n)\). Therefore, if the initial auxiliary variable is chosen consistently, \(Q^n=\tfrac12((U^n)^{\odot2}-\mathbf1)\) for all $n$. Furthermore, on the constraint manifold we have 
	\[
	E_{\mathrm{iso},h}(U^{n+1})
	\le
	E_{\mathrm{iso},h}(U^n).
	\]
	This completes the proof.
\end{proof}

Anisotropic interfacial energies are used to describe direction-dependent phenomena such as faceted pattern formation and crystal growth. In CH-type models, this directional dependence leads to more complicated energy densities, often with rational-like terms involving the gradient of the phase field~\cite{ma2006implementation}.
The anisotropic CH equation considered here is
\begin{equation}\label{eq:CH-ani}
\left\{
\begin{aligned}
	&u_t = \nabla\!\cdot\!\big(M\nabla\mu\big),
	\qquad (x,t)\in \Omega \times (0,T],\\
	&\mu=
	-\,\nabla\!\cdot\big(\Gamma(\theta)\,\nabla u\big)
	+\frac{\Gamma(\theta)}{\varepsilon^2}F'(u)
	+\beta\,\Gamma(\theta)\,\Delta^2 u,
\end{aligned}
\right.
\end{equation}
where $\Gamma(\theta)$ is the anisotropy factor with $\theta=\operatorname{atan2}(u_y,u_x)$.
The corresponding energy can be written as
\begin{equation}\label{eq:E-ani}
E_{\mathrm{ani}}(u)
=
\frac12 (\nabla u,\nabla u)_\Gamma
+\varepsilon^{-2}(F(u),1)_\Gamma
+\frac{\beta}{2}(\Delta u,\Delta u)_\Gamma,
\end{equation}
where $(\phi,\psi)_\Gamma$ denotes the continuous $\Gamma$-weighted $L^2(\Omega)$ inner product defined by $(\phi,\psi)_\Gamma =\int_\Omega \Gamma(\Theta)\,\phi\,\psi\,\mathrm{d}x$. Its discrete counterpart is denoted by \((\cdot,\cdot)_{h,\Gamma}\), as defined above.
Clearly, the energy is dissipative:
\[
\frac{\rm d}{\rm dt}E_{\mathrm{ani}}(u)=-M\int_\Omega |\nabla\mu|^2\,\rm d x\le 0.
\]
The anisotropic CH equation satisfies the same mass-conservation law as~\eqref{eq:iso-MaCon}.

For $U\in V_h$, denote $\Gamma(\Theta)=\Gamma(\Theta(U_{jk}))$  with $\Theta(U_{jk})=\operatorname{atan2}((U_{jk})_y,(U_{jk})_x)$ the discrete orientation.
Applying the Fourier pseudo-spectral method in space to~\eqref{eq:CH-ani}, yields the semi-discrete numerical discretization
\begin{equation}\label{eq:semi-discrete-ani}
U_t-\Delta_h\Psi(U)=0,
\end{equation}
where $\Psi(U)
=-D_x\,\bigl( \Gamma(\Theta)\odot D_xU\bigr)
-\bigl( \Gamma(\Theta)\odot (UD_y^\top)\bigr)D_y^\top+\varepsilon^{-2} \Gamma(\Theta)\odot f(U)
+\beta\, \Gamma(\Theta)\odot \Delta_h^2U.$
Let the discrete weighted norm be \(|U|_{1,h,\Gamma}^2=
(U_x,U_x)_{h,\Gamma}
+
(U_y,U_y)_{h,\Gamma}\),
and \(\|\Delta_h U\|_{h,\Gamma}^2=(\Delta_h U,\Delta_h U)_{h,\Gamma}\), then the discrete anisotropic energy is defined by
\begin{equation}\label{eq:E-ani-h}
E_{\mathrm{ani},h}(U)
=
\frac12 |U|_{1,h,\Gamma}^2
+\varepsilon^{-2}\bigl(F(U),\,\mathbf 1\bigr)_{h,\Gamma}
+\frac{\beta}{2}\|\Delta_h U\|_{h,\Gamma}^2.
\end{equation}
For the anisotropic case, applying the AIQ method to~\eqref{eq:semi-discrete-ani} gives the following fully discrete scheme:
\begin{equation}\label{fu-dis-scheme:CH-ani}
\left\{
\begin{aligned}
	U_i &= U^n+\Delta t\sum_{j=1}^s a_{ij}K_j,\\
	Y_{\ell,i} &= Y_\ell^n+\Delta t\sum_{j=1}^s a_{ij}P_{\ell,j},
	\qquad \ell=1,\dots,6,\\
	Z_{\ell,i} &= Z_\ell^n+\Delta t\sum_{j=1}^s a_{ij}R_{\ell,j},
	\qquad \ell=1,2,\\
	\Phi_{1,i} &= \Phi_1^n+\Delta t\sum_{j=1}^s a_{ij}S_{1,j},\\
	U^{n+1} &= U^n+\Delta t\sum_{i=1}^s b_iK_i,\\
	Y_\ell^{n+1} &= Y_\ell^n+\Delta t\sum_{i=1}^s b_iP_{\ell,i},
	\qquad \ell=1,\dots,6,\\
	Z_\ell^{n+1} &= Z_\ell^n+\Delta t\sum_{i=1}^s b_iR_{\ell,i},
	\qquad \ell=1,2,\\
	\Phi_1^{n+1} &= \Phi_1^n+\Delta t\sum_{i=1}^s b_iS_{1,i},
\end{aligned}
\right.
\qquad i=1,\dots,s,
\end{equation}
where we choose the fourfold anisotropy function $\Gamma(\theta)=1+\alpha\cos(4\theta) = 1-3\alpha+4\alpha\,\frac{u_x^4+u_y^4}{(u_x^2+u_y^2)^2}$. This is exactly the rational-like function discussed in Example~\ref{eg:fourfold},. We define the same auxiliary variables $y_1,\cdots,y_6$ together with $z_1=\frac12(u^2-1),\,z_2=z_1^2,\,\phi_1=(\Delta u)^2.$ Their discrete counterparts are denoted by \(Y_1,\dots,Y_6\), \(Z_1\), \(Z_2\), and \(\Phi_1\), respectively. The symbols of Eq.~\eqref{fu-dis-scheme:CH-ani} are as follows: \begin{equation}\label{eq:CH-ani-aux-rk-derivatives} 
\left\{ \begin{aligned} K_i &= \Delta_h\mu_i,\\ P_{1,i} &= 2\,U_{x,i}\odot K_{x,i},\\ P_{2,i} &= 2\,U_{y,i}\odot K_{y,i},\\ P_{3,i} &= 2\,(Y_{1,i}+Y_{2,i})\odot(P_{1,i}+P_{2,i}),\\ P_{4,i} &= -\,Y_{4,i}^{\odot 2}\odot P_{3,i},\\ P_{5,i} &= 2\,Y_{1,i}\odot P_{1,i} +2\,Y_{2,i}\odot P_{2,i},\\ P_{6,i} &= Y_{4,i}\odot P_{5,i} +Y_{5,i}\odot P_{4,i},\\ R_{1,i} &= U_i\odot K_i,\\ R_{2,i} &= 2\,Z_{1,i}\odot R_{1,i},\\ S_{1,i} &= 2\,\Delta_h U_i\odot \Delta_h K_i . \end{aligned} \right. 
\end{equation}
For the anisotropic scheme, the stage derivative \(K_i\) has the same conservative form \(K_i=\Delta_h\mu_i\), with the anisotropic chemical potential and auxiliary-variable relations specified in Appendix~A. The mass conservation property therefore follows directly from the discrete divergence form of the stage derivatives and is independent of the auxiliary variables.

Further details are given in Appendix~A. The following proposition  shows that the proposed scheme preserves the energy dissipation.
\begin{proposition}\label{thm:ani-diss-law}
	Assume that the auxiliary variables are initialized consistently, so that	the auxiliary constraints in Appendix~A are satisfied at \(t=0\). Then the numerical discretization~\eqref{fu-dis-scheme:CH-ani} satisfies the discrete	energy-dissipation identity:
	\[
	\begin{aligned}
		\frac{E_{\mathrm{ani},h}(U^{n+1})-E_{\mathrm{ani},h}(U^n)}{\Delta t}
		&=
		-\sum_{i=1}^s b_i
		\left(
		\|D_x\mu_i\|_h^2+\|\mu_iD_y^\top\|_h^2
		\right).
	\end{aligned}
	\]
	In particular, if \(b_i\ge0\) for \(i=1,\ldots,s\), then
	\[
	E_{\mathrm{ani},h}(U^{n+1})\le E_{\mathrm{ani},h}(U^n).
	\]
\end{proposition}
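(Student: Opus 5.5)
I will follow the same two-step template as the proof of Proposition~\ref{thm:CH-iso-e-dis}: first show that the extended quadratic energy dissipates under the SRK scheme, then show that the Casimir constraints are preserved, so that the extended energy restricts to \(E_{\mathrm{ani},h}\).

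For the first step, I would write the extended energy on the auxiliary variables as
\[
\widetilde E_{\mathrm{ani},h}
=\tfrac12\bigl(\widetilde\Gamma,\,Y_1+Y_2\bigr)_h
+\varepsilon^{-2}\bigl(\widetilde\Gamma,\,Z_2\bigr)_h
+\tfrac{\beta}{2}\bigl(\widetilde\Gamma,\,\Phi_1\bigr)_h,
\]
with \(\widetilde\Gamma=(1-3\alpha)\mathbf 1+4\alpha Y_6\). This is a quadratic function of \((U,Y,Z,\Phi_1)\), since it is linear in \(Y_1,Y_2,Z_2,\Phi_1\) plus bilinear terms \(Y_6\cdot(Y_1+Y_2)\), \(Y_6\cdot Z_2\) and \(Y_6\cdot\Phi_1\). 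On the constraint manifold \(Y_1+Y_2=|\nabla_hU|^2\), \(Z_2=F(U)\), \(\Phi_1=(\Delta_hU)^2\) and \(\widetilde\Gamma=\Gamma(\Theta)\), so \(\widetilde E_{\mathrm{ani},h}=E_{\mathrm{ani},h}(U)\).

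The SRK condition~\eqref{eq:srk-symplectic-condition} implies the exact stage identity \(\widetilde E(z^{n+1})-\widetilde E(z^n)=\Delta t\sum_i b_i\,\nabla\widetilde E(Z_i)\cdot \dot z_i\). This is the same computation as in the quadratic-functional theorem: \(\widetilde E\) is quadratic, and its gradient is affine. I then need
\[
\nabla\widetilde E(Z_i)\cdot\dot z_i=(\mu_i,K_i)_h=(\mu_i,\Delta_h\mu_i)_h .
\]

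The key point for this identity is that the stage derivatives \(P_{\ell,i},R_{\ell,i},S_{1,i}\) in \eqref{eq:CH-ani-aux-rk-derivatives} are exactly the linearizations, evaluated at the stage values, of the constraint relations along the direction \(K_i\). Contracting \(\nabla\widetilde E(Z_i)\) with them is therefore a chain-rule computation. Moving the discrete derivatives off \(K_i\) by summation by parts gives \((\mu_i,K_i)_h\), where \(\mu_i\) is the anisotropic chemical potential at the stage as defined in Appendix~A. The identity then holds by the definition of \(\mu_i\).

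Summation by parts for \(\Delta_h\) gives \((\mu_i,\Delta_h\mu_i)_h=-\|D_x\mu_i\|_h^2-\|\mu_iD_y^\top\|_h^2\), which is the claimed dissipation for \(\widetilde E_{\mathrm{ani},h}\).

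For the second step, each constraint is a quadratic relation: \(Y_1-U_x^{\odot2}\), \(Y_3-(Y_1+Y_2)^{\odot2}\), \(Y_3\odot Y_4-\mathbf 1\), \(Y_6-Y_4\odot Y_5\), \(Z_2-Z_1^{\odot2}\), \(\Phi_1-(\Delta_hU)^{\odot2}\), and so on. By the construction of \(P,R,S\), each is a first integral of the extended semi-discrete system. One check needs care: \(P_{4,i}=-Y_{4,i}^{\odot2}\odot P_{3,i}\) gives \(\tfrac{d}{dt}(Y_3\odot Y_4)=Y_4\odot P_3+Y_3\odot P_4=Y_4\odot P_3\odot(\mathbf 1-Y_3\odot Y_4)\). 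So \(Y_3\odot Y_4-\mathbf 1\) is not conserved identically; rather, the constraint set is invariant. Even so, I would use the form in which the SRK update conserves it, or argue inductively: on the constraint manifold the vector field coincides with one for which \(Y_3\odot Y_4\) is a conserved quadratic. By the invariance of quadratic invariants under SRK methods (the second Proposition of Section~2), consistent initialization propagates the constraints to every \(n\). Restricting the first step's identity to the constraint manifold then yields the stated identity for \(E_{\mathrm{ani},h}\), and the monotonicity follows when \(b_i\ge0\).

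I expect the main obstacle to be two things. The first is verifying the stage identity \(\nabla\widetilde E(Z_i)\cdot\dot z_i=(\mu_i,K_i)_h\) with the weighted terms, since \(Y_6\) couples with everything. The second is making the reciprocal constraint \(Y_3\odot Y_4=\mathbf 1\) genuinely a conserved quadratic at the stage level. For the latter I would first try replacing \(Y_{4,i}^{\odot2}\) in \(P_{4,i}\) by \(Y_{4,i}\odot(Y_{3,i})^{\odot-1}\) on the manifold, or treating \(Y_3\odot Y_4\) as an exactly conserved Casimir in the extended structure matrix as in~\cite{Lu2025}.
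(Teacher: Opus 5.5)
Your outline is the paper's proof: the same quadratic $\widetilde E_{\mathrm{ani},h}$, the exact SRK identity, summation by parts, and then restriction to $\mathcal M_h$. The energy half is fine. Your first worry is not a real obstacle. Every stage derivative in \eqref{eq:CH-ani-aux-rk-derivatives} is linear in $K_i$, so $\nabla\widetilde E(Z_i)\cdot k_i=(\mu_i,K_i)_h$ for the $\mu_i$ obtained by transposing the discrete derivatives, and no constraint is needed at the stages. This does fix $\mu_i$ as the full chain-rule expression, including the part through $P_{6,i}$, not the operator $\Psi$ of \eqref{eq:semi-discrete-ani}.

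\textbf{The gap is your second step, and your ``inductive'' patch does not close it.} Under \eqref{eq:srk-symplectic-condition}, any quadratic $C$ satisfies $C(z^{n+1})-C(z^n)=\Delta t\sum_i b_i\nabla C(Z_i)\cdot k_i$. The gradient is evaluated at the stage values, which are off $\mathcal M_h$ even when $z^n\in\mathcal M_h$. So agreement with a Casimir-preserving vector field on $\mathcal M_h$ gives nothing. Take $C=Y_3\odot Y_4-\mathbf 1$ with the displayed $P_{4,i}=-Y_{4,i}^{\odot2}\odot P_{3,i}$, and the implicit midpoint rule with $C(z^n)=0$. A direct computation gives, pointwise,
\[
C(z^{n+1})=-\frac{\Delta t^3\,Y_{4,1}^3P_{3,1}^3}{4+2\Delta t\,Y_{4,1}P_{3,1}},
\]
which is nonzero whenever $Y_{4,1}P_{3,1}\neq0$. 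Hence $Y_6^{n+1}\neq Y_5^{n+1}/Y_3^{n+1}$ and $\widetilde E^{\,n+1}_{\mathrm{ani},h}\neq E_{\mathrm{ani},h}(U^{n+1})$. For the scheme as displayed, only the identity for $\widetilde E_{\mathrm{ani},h}$ is exact; the reciprocal constraint drifts by $O(\Delta t^3)$ per step.

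You cannot borrow the missing step from the paper. Its proof evaluates $\widetilde E_{\mathrm{ani},h}$ on $\mathcal M_h$ at both time levels and invokes the SRK identity without checking the reciprocal constraint. The Section~2 preservation result applies to the extension \eqref{eq:extendODE}, where each $G_i$ is a first integral identically, and the displayed $P_{4,i}$ is not of that form. What closes the gap is your other suggestion, which changes the scheme: set $P_{4,i}=-Y_{4,i}\odot Y_{3,i}^{\odot(-1)}\odot P_{3,i}$. This is exactly $\dot y=-B^{\dagger}A\tilde f$ from \eqref{eq:extendODE} for $G=Y_3Y_4-1$. Then $Y_{4,i}\odot P_{3,i}+Y_{3,i}\odot P_{4,i}=0$ at every stage, so every constraint $G$ has $\nabla G(Z_i)\cdot k_i=0$ and consistent initialization propagates. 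Your argument then goes through with $\mu_i$ recomputed by the same chain rule; only $P_{6,i}$ changes. This requires $Y_{3,i}$ to have no zero entries, the same nondegeneracy $\nabla_hU\neq0$ that $\Theta$ and $Y_4$ already need. State the result for this modified scheme. As written, neither your argument nor the paper's gives the exact identity for $E_{\mathrm{ani},h}$.
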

\begin{proof}
	The definitions in Appendix~A imply that the anisotropic discrete energy admits a quadratic modified form
	\[
	\widetilde E_{\mathrm{ani},h}
	=
	\frac12\bigl(\Gamma(Y_6)\odot(Y_1+Y_2),\mathbf1\bigr)_h
	+\varepsilon^{-2}\bigl(\Gamma(Y_6)\odot Z_2,\mathbf1\bigr)_h
	+\frac{\beta}{2}\bigl(\Gamma(Y_6)\odot\Phi_1,\mathbf1\bigr)_h ,
	\]
	where, for the fourfold anisotropy, \(\Gamma(Y_6)=1-3\alpha+4\alpha Y_6\).
	On the constraint manifold we have
	\[
	\widetilde E_{\mathrm{ani},h}=E_{\mathrm{ani},h}(U).
	\]
	Thus, using the SRK quadratic energy identity, we obtain
	\[
	E_{\mathrm{ani},h}(U^{n+1})-E_{\mathrm{ani},h}(U^n)
	=
	\widetilde E_{\mathrm{ani},h}^{\,n+1}
	-
	\widetilde E_{\mathrm{ani},h}^{\,n}
	=
	\Delta t\sum_{i=1}^s b_i(\mu_i,K_i)_h .
	\]
	Since \(K_i=\Delta_h\mu_i\), the discrete summation-by-parts identity gives \((\mu_i,\Delta_h\mu_i)_h	=	-\|D_x\mu_i\|_h^2-\|\mu_iD_y^\top\|_h^2\le0\). Therefore,
	\[
	E_{\mathrm{ani},h}(U^{n+1})-E_{\mathrm{ani},h}(U^n)
	=
	-\Delta t\sum_{i=1}^s b_i
	\left(
	\|D_x\mu_i\|_h^2+\|\mu_iD_y^\top\|_h^2
	\right)
	\le0,
	\]
	where the last inequality follows from \(b_i\ge0\). This proves the discrete
	energy-dissipation law for the anisotropic scheme.
\end{proof}
\begin{proposition}\label{prop:discrete-mass-conservation}
	Assume that the Fourier pseudo-spectral Laplacian on the periodic grid satisfies \((\Delta_h V,\mathbf1)_h=0\) for all \(V\in V_h\). Then both the isotropic AIQ scheme~\eqref{fu-dis-scheme:CH-iso} and the anisotropic AIQ scheme~\eqref{fu-dis-scheme:CH-ani} preserve the discrete mass:
	\[
	(U^{n+1},\mathbf1)_h=(U^n,\mathbf1)_h .
	\]
\end{proposition}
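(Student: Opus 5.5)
The plan is to test the update formula for \(U^{n+1}\) against the constant grid function \(\mathbf 1\) and to use linearity of the discrete inner product. In both schemes~\eqref{fu-dis-scheme:CH-iso} and~\eqref{fu-dis-scheme:CH-ani}, the \(U\)-component is updated by
\[
U^{n+1}=U^n+\Delta t\sum_{i=1}^s b_iK_i .
\]
Only this relation is needed. The auxiliary variables \(Q\), \(Y_\ell\), \(Z_\ell\) and \(\Phi_1\) enter only through the stage chemical potentials \(\mu_i\), and the form of \(\mu_i\) will play no role. Taking \((\cdot,\mathbf1)_h\) of both sides gives
\[
(U^{n+1},\mathbf1)_h=(U^n,\mathbf1)_h+\Delta t\sum_{i=1}^s b_i\,(K_i,\mathbf1)_h .
\]

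The key step is to show that \((K_i,\mathbf1)_h=0\) for every stage \(i\). In the isotropic scheme the stage derivative is \(K_i=\Delta_h\mu_i\) with \(\mu_i\in V_h\). In the anisotropic scheme the stage derivative has the same form, \(K_i=\Delta_h\mu_i\), as stated in~\eqref{eq:CH-ani-aux-rk-derivatives}. Here \(\mu_i\) is the anisotropic chemical potential of Appendix~A, and it is again a grid function in \(V_h\). Applying the hypothesis \((\Delta_h V,\mathbf1)_h=0\) with \(V=\mu_i\) makes each stage contribution vanish. Therefore \((U^{n+1},\mathbf1)_h=(U^n,\mathbf1)_h\).

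As a side remark, the hypothesis itself can be justified from the pseudo-spectral structure. Since \(\Delta_h V=D_x^2V+V(D_y^\top)^2\), summation by parts moves each differentiation matrix onto \(\mathbf1\), and \(D_x\mathbf1=0\) and \(\mathbf1 D_y^\top=0\) because the derivative of the constant (zero-wavenumber) Fourier mode is zero. This step is only a consistency check, because the proposition assumes the identity.

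There is essentially no obstacle. The only point to verify is that the stage values are well defined, so that each \(\mu_i\) is a genuine element of \(V_h\). The argument needs no sign condition on \(b_i\), no symplectic condition~\eqref{eq:srk-symplectic-condition}, and no consistent initialization of the auxiliary variables. Mass conservation comes purely from the conservative divergence form \(K_i=\Delta_h\mu_i\), and it holds for any Runge--Kutta coefficients.
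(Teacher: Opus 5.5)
Your proposal is correct and follows the paper's proof exactly: pair the update $U^{n+1}=U^n+\Delta t\sum_i b_iK_i$ with $\mathbf 1$ and use $K_i=\Delta_h\mu_i$ together with the hypothesis $(\Delta_h V,\mathbf 1)_h=0$ to kill each stage term. The paper likewise notes that this holds for arbitrary coefficients $b_i$ and does not depend on the form of the (anisotropic) chemical potential.
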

	
\begin{proof}
	For both schemes, the update of the physical variable is
	\[
	U^{n+1}=U^n+\Delta t\sum_{i=1}^s b_iK_i,
	\qquad K_i=\Delta_h\mu_i .
	\]
	Taking the discrete inner product with \(\mathbf1\) gives
	\[
	(U^{n+1},\mathbf1)_h-(U^n,\mathbf1)_h
	=
	\Delta t\sum_{i=1}^s b_i(\Delta_h\mu_i,\mathbf1)_h=0.
	\]
	Thus the discrete mass is preserved for arbitrary SRK coefficients \(b_i\). In particular, the anisotropic chemical potential changes the value of \(\mu_i\), but not the identity \((\Delta_h\mu_i,\mathbf1)_h=0\) imposed by periodic summation by parts.
\end{proof}
	
\section{Dispersion analysis and physical properties}
We now apply the AIQ framework to the CH systems and examine their analytical properties. For the CH equation, the bulk potential \(F(u)\) usually takes a double-well form, e.g.,
$$
F(u)=\tfrac14(u^2-1)^2.
$$
Other common choices include the Flory Huggins form \cite{flory1953principles, huggins1941solutions}
$$
F(u)=(1+u)\ln(1+u)+(1-u)\ln(1-u)+\theta u^2
$$
and the form of higher-order polynomial \cite{wise2009energy}
$$
F(u)=\tfrac{a}{2}u^2+\tfrac{b}{4}u^4+\tfrac{c}{6}u^6 .
$$
The three commonly used potential functions are shown in Fig.~\ref{fig:potential}. The red dot in each panel marks the inflection point of the corresponding potential.
\begin{figure}[htbp]
\centering
\begin{subfigure}[t]{0.33\textwidth}
	\centering
	\includegraphics[width=\linewidth]{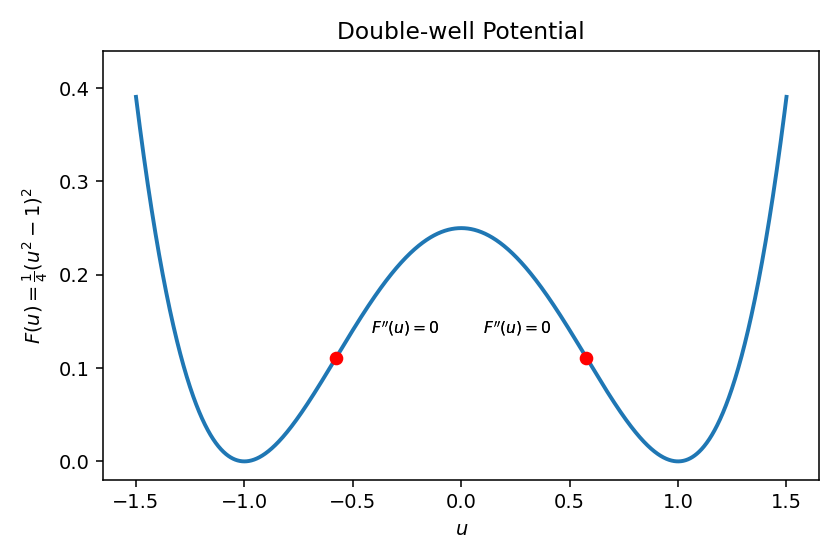}
	\caption{Double-well potential}
\end{subfigure}\hfill
\begin{subfigure}[t]{0.33\textwidth}
	\centering
	\includegraphics[width=\linewidth]{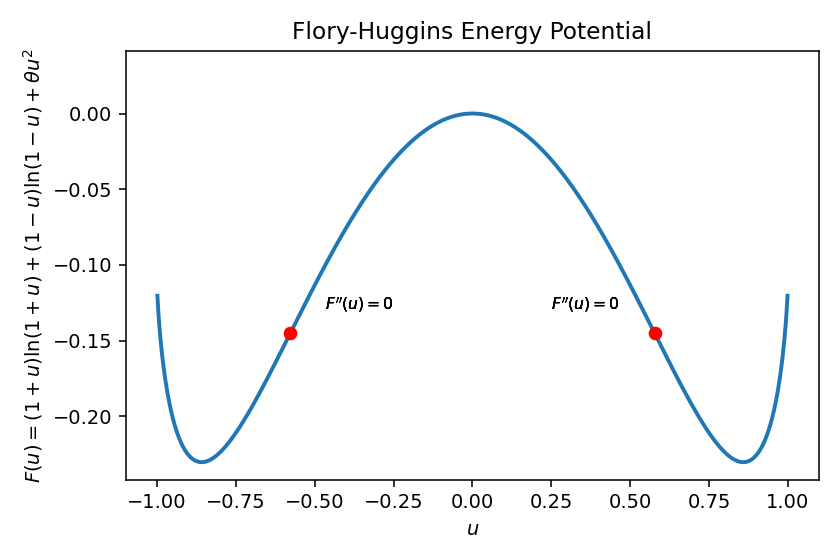}
	\caption{Flory--Huggins potential}
\end{subfigure}\hfill
\begin{subfigure}[t]{0.33\textwidth}
	\centering
	\includegraphics[width=\linewidth]{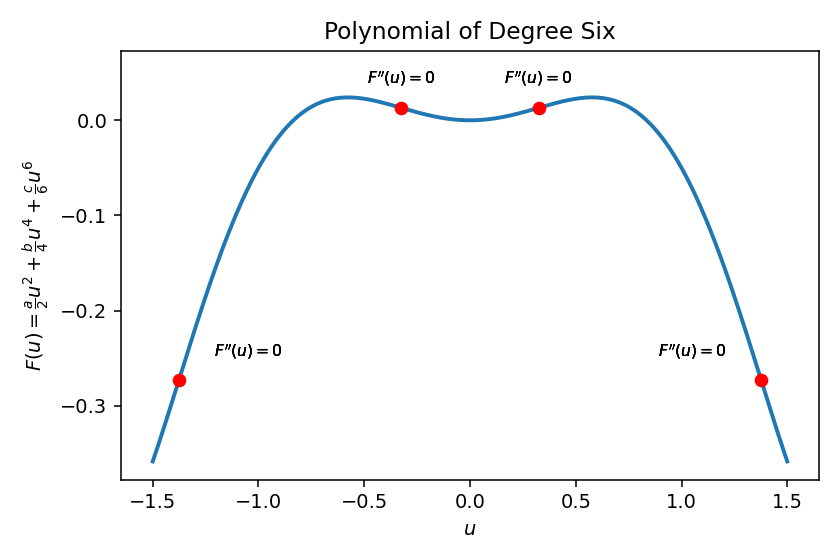}
	\caption{Sixth-order polynomial potential}%
\end{subfigure}
\caption{Three commonly used potential functions.}
\label{fig:potential}
\end{figure}
In what follows, we focus primarily on the double-well potential, while other choices are included to illustrate the broader range of admissible free-energy densities in the CH framework.

\paragraph{Dispersion analysis for the isotropic CH equation}
To derive the dispersion relation, we investigate the evolution of small-amplitude perturbations
around a homogeneous state by setting $u(\mathbf x,t)=u_0+\eta(\mathbf x,t)$ with $|\eta|\ll 1.$
Since the mass is conserved, $u_0=\frac{1}{|\Omega|}\int_\Omega u(\mathbf x,0)\,\mathrm{d}\mathbf x$ is constant. Linearizing CH system~\eqref{eq:CH-iso} around \(u_0\) with constant mobility gives
\begin{equation}\label{eq:CH-linear}
\eta_t
=
M\,\Delta\Bigl(-\Delta\eta+\varepsilon^{-2}F''(u_0)\eta+\beta\Delta^2\eta\Bigr).
\end{equation}

For a Fourier mode $\eta(\mathbf x,t)=\hat\eta_{\mathbf k}(t)e^{i\mathbf k\cdot\mathbf x}$ on a periodic domain, the linearized equation~\eqref{eq:CH-linear} reduces to
\begin{equation}\label{eq:eta}
\frac{\mathrm d}{\mathrm dt}\hat\eta_{\mathbf k}(t)
=
\lambda(\mathbf k;u_0)\,\hat\eta_{\mathbf k}(t),
\end{equation}
where $\lambda(\mathbf k;u_0)=-M\,|\mathbf k|^2\Bigl(|\mathbf k|^2+\varepsilon^{-2}F''(u_0)+\beta|\mathbf k|^4\Bigr)$ denotes the growth rate and  $\mathbf k=(k_x,k_y)$ is the wavevector. If we set $k=|\mathbf k|$ then the dispersion relation can be written as
\[
\lambda(k;u_0)
=
-Mk^2\Bigl(k^2+\varepsilon^{-2}F''(u_0)+\beta k^4\Bigr).
\]

The system~\eqref{eq:eta} is unstable if and only if the growth rate satisfies $\lambda(k;u_0)>0$, i.e., \(k^{2}+\varepsilon^{-2}F''(u_0)+\beta k^{4}<0\). Letting $s=k^{2}\ge0$ implies a  quadratic inequality \(\beta s^{2}+s+\varepsilon^{-2}F''(u_0)<0,\)  which  holds  only when $F''(u_0)<0$.
Moreover, for $\beta>0$ the unstable wavenumbers satisfy
$$
0<s<s_{+},\qquad
s_{+}=\frac{-1+\sqrt{\,1-4\beta\,\varepsilon^{-2}F''(u_0)\,}}{2\beta},
$$
or equivalently,
$$
0<|k|<\sqrt{s_{+}}
=\left(\frac{-1+\sqrt{\,1-4\beta\,\varepsilon^{-2}F''(u_0)\,}}{2\beta}\right)^{1/2}.
$$
In the limit $\beta\to0^{+}$, the above condition reduces to
$0<|k|<\varepsilon^{-1}\sqrt{-F''(u_0)}$.
Therefore, two regimes arise:
\begin{itemize}
\item[(1)] If $F''(u_0)>0$, the system is stable; all perturbations decay.
\item[(2)] If $F''(u_0)<0$, the system is spinodally unstable; small perturbations grow exponentially.
\end{itemize}

For the double-well potential, one has \(F''(u_0)=3u_0^2-1\). Hence the spinodal interval is \(|u_0|<1/\sqrt{3}\). In this regime, the linear growth rate \(\lambda=\lambda(\mathbf{k};u_0)\) is positive on a finite band of wave numbers and nonpositive otherwise. For fixed \(u_0\), the dependence of \(\lambda(\mathbf{k};u_0)\) on \(\mathbf{k}\) is illustrated in Fig.~\ref{fig:dispersion-iso}. Here \(u_0=0\) is chosen in the spinodal region, i.e. \(F''(u_0)<0\), so that the growth rate is positive on a finite band of wave numbers.
\begin{figure}[htbp]
\centering
\begin{subfigure}[t]{0.5\textwidth}
	\centering
	\includegraphics[width=\linewidth]{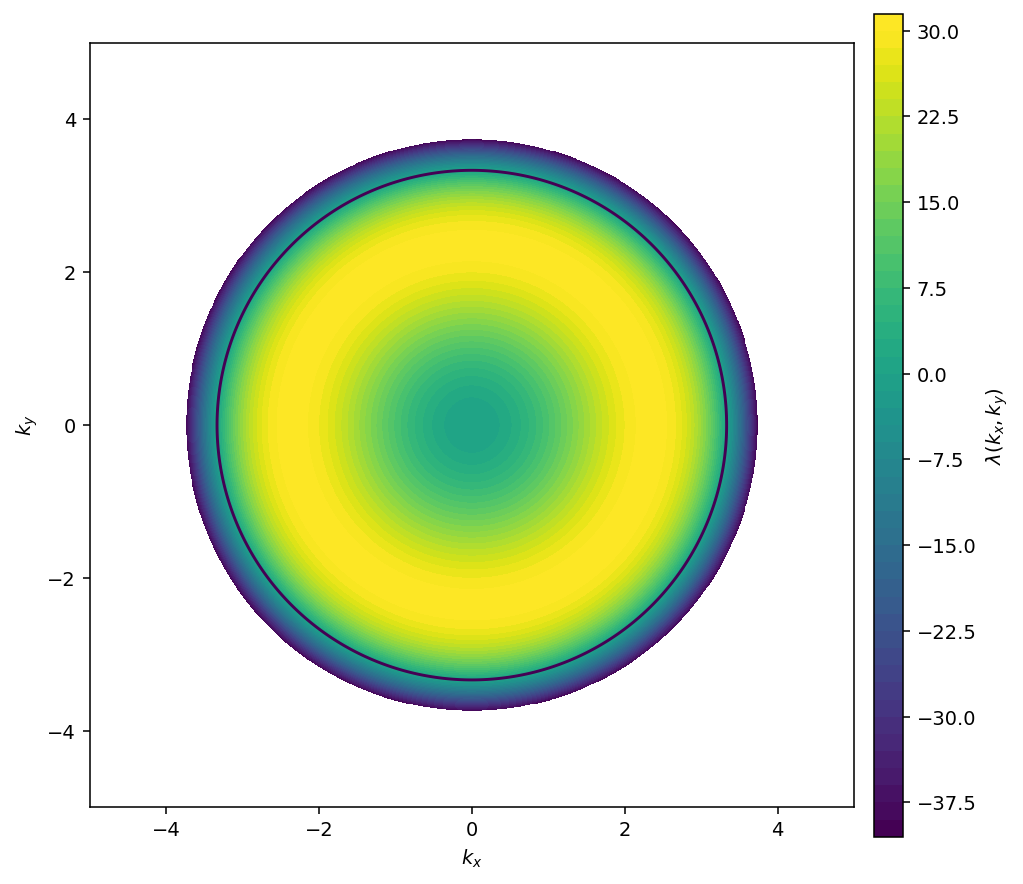}
	\caption{Contour of $\lambda(\mathbf{k})$}
\end{subfigure}\hfill
\begin{subfigure}[t]{0.45\textwidth}
	\includegraphics[width=\linewidth]{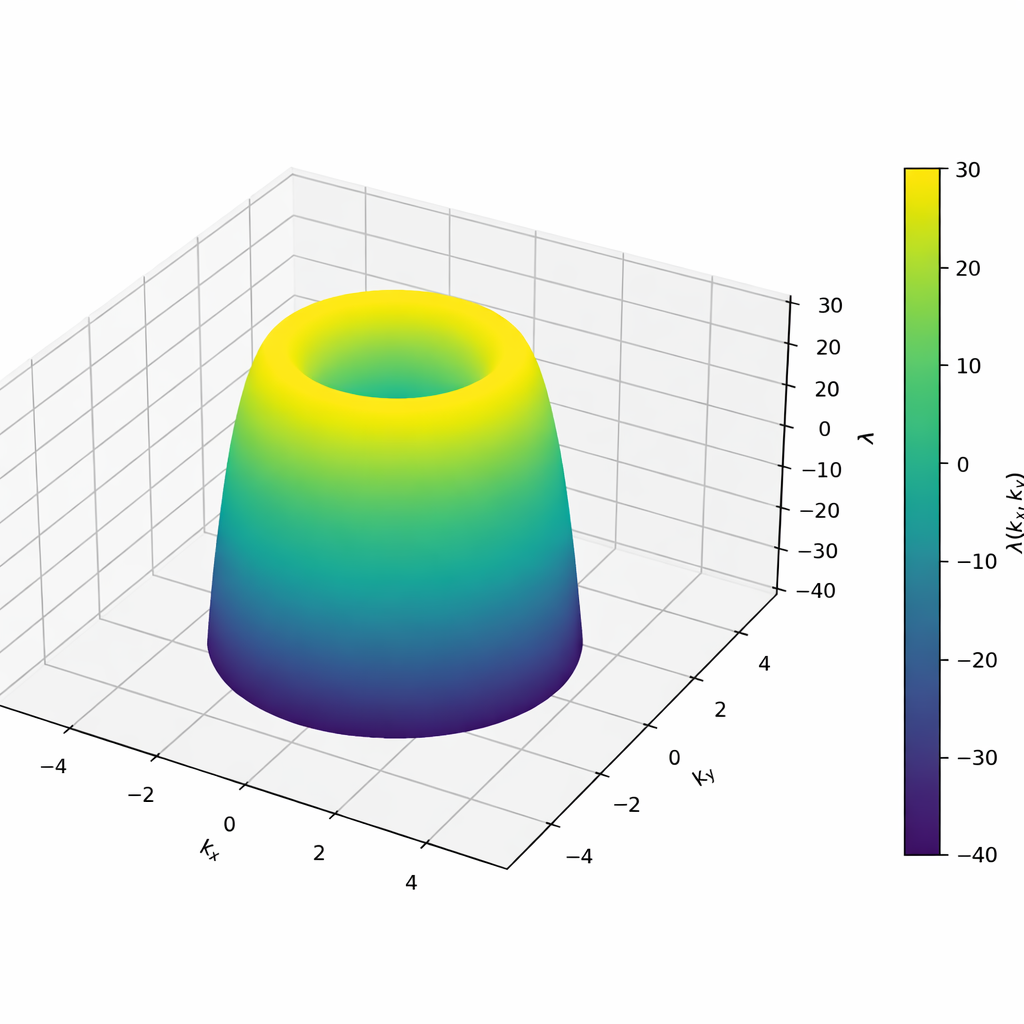}
	\caption{3D surface of $\lambda(\mathbf{k})$}
\end{subfigure}
\caption{Dispersion relation of the linearized isotropic Cahn--Hilliard equation for \(u_0=0\). }
\label{fig:dispersion-iso}
\end{figure}

Consider the following system with periodic boundary conditions
\begin{equation}\label{eq:linear-model}
u_t = a\,\Delta u + b\,\Delta^2 u + c\,\Delta^3 u
\qquad \text{in } \Omega \times (0,\infty),
\end{equation}
where \(a,b,c\in\mathbb{R}\) and \(\Omega=[0,L]^2\). Note that the linearized isotropic CH equation~\eqref{eq:CH-linear} corresponds to \eqref{eq:linear-model} when $a=-M\epsilon^{-2}F''(u_0), b=-M, c=M\beta.$ Then we have the following proposition.
\begin{proposition}\label{proposition:disp_cont_disc_refined}
	Consider Eq.~\eqref{eq:linear-model}. It admits normal-mode solutions of the form $u(\bm x,t)=U_0\,e^{\lambda t}e^{\mathrm i \mathbf k\cdot \bm x}$, $\mathbf k\in\frac{2\pi}{L}\mathbb Z^2$, \(U_0\in\mathbb C\) where the corresponding growth rate \(\lambda\) is given by
	\begin{equation}\label{eq:dis}
		\lambda(\mathbf k)=-a\,k^2+b\,k^4-c\,k^6.
	\end{equation}
\end{proposition}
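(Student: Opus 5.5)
The plan is to substitute the normal-mode ansatz directly into Eq.~\eqref{eq:linear-model} and read off the growth rate from the symbol of the operator. For a plane wave $e^{\mathrm i\mathbf k\cdot\bm x}$ with $\mathbf k\in\frac{2\pi}{L}\mathbb Z^2$, the function is $L$-periodic in both variables, so it is admissible for the periodic problem on $\Omega=[0,L]^2$. The key identity is $\Delta e^{\mathrm i\mathbf k\cdot\bm x}=-k^2e^{\mathrm i\mathbf k\cdot\bm x}$ with $k=|\mathbf k|$. Iterating it gives $\Delta^2 e^{\mathrm i\mathbf k\cdot\bm x}=k^4e^{\mathrm i\mathbf k\cdot\bm x}$ and $\Delta^3 e^{\mathrm i\mathbf k\cdot\bm x}=-k^6e^{\mathrm i\mathbf k\cdot\bm x}$. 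In other words, each plane wave is a common eigenfunction of $\Delta,\Delta^2,\Delta^3$.

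Next, insert $u=U_0e^{\lambda t}e^{\mathrm i\mathbf k\cdot\bm x}$. The left side is $u_t=\lambda u$, and by the identities above the right side equals $(-ak^2+bk^4-ck^6)u$. For $U_0\neq0$ the common nonvanishing factor $U_0e^{\lambda t}e^{\mathrm i\mathbf k\cdot\bm x}$ can be cancelled. Hence the ansatz solves the equation if and only if $\lambda$ satisfies \eqref{eq:dis}. The case $U_0=0$ is the trivial solution and holds for any $\lambda$.

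To justify the phrase ``admits normal-mode solutions'' as the full picture, I would add that any sufficiently regular periodic solution expands in the Fourier basis $\{e^{\mathrm i\mathbf k\cdot\bm x}\}_{\mathbf k\in\frac{2\pi}{L}\mathbb Z^2}$. Because the operator is linear with constant coefficients, the Fourier coefficients decouple into $\frac{\mathrm d}{\mathrm dt}\hat u_{\mathbf k}=\lambda(\mathbf k)\hat u_{\mathbf k}$. Each coefficient then evolves as $\hat u_{\mathbf k}(0)e^{\lambda(\mathbf k)t}$, which is exactly the normal-mode form.

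As a consistency check, I would substitute $a=-M\varepsilon^{-2}F''(u_0)$, $b=-M$, $c=M\beta$. This recovers $\lambda=-Mk^2\bigl(k^2+\varepsilon^{-2}F''(u_0)+\beta k^4\bigr)$, matching the earlier linearization. There is no real obstacle. The only point needing care is the sign bookkeeping in the powers of $-k^2$, together with the remark that only the lattice wavevectors $\mathbf k\in\frac{2\pi}{L}\mathbb Z^2$ are compatible with periodicity.
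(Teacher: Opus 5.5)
Your core argument is correct and is essentially the paper's proof. You substitute the ansatz, use that $e^{\mathrm i\mathbf k\cdot\bm x}$ is an eigenfunction of $\Delta$ with eigenvalue $-k^2$ (and hence of $\Delta^2,\Delta^3$ with eigenvalues $k^4,-k^6$), and cancel the nonvanishing factor. The Fourier-decoupling remark is a harmless addition.

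Your consistency check, however, does not go through as written. With $a=-M\varepsilon^{-2}F''(u_0)$, the term $-ak^2$ equals $+M\varepsilon^{-2}F''(u_0)k^2$, so you do not recover $-Mk^2\bigl(k^2+\varepsilon^{-2}F''(u_0)+\beta k^4\bigr)$. Expanding $M\Delta\bigl(-\Delta\eta+\varepsilon^{-2}F''(u_0)\eta+\beta\Delta^2\eta\bigr)$ gives $a=+M\varepsilon^{-2}F''(u_0)$, $b=-M$, $c=M\beta$. The paper's text contains the same sign slip, so you should state the corrected identification rather than claim the check works with the sign as given.
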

\begin{proof}
	Substituting $u(\bm x,t)=U_0\,e^{\lambda t}\,e^{\mathrm i \mathbf k\cdot \bm x}$ into \eqref{eq:linear-model} yields
	\[
	u_t=\lambda u.
	\] 
	For the spatial operators, the eigenfunction property of the Fourier mode gives $\Delta u = -k^2 u$, and consequently $\Delta^2 u = k^4 u$ and $\Delta^3 u = -k^6 u$. Inserting these expressions into the PDE leads to the characteristic equation
	\[
	\lambda(\mathbf k)=-a\,k^2+b\,k^4-c\,k^6.
	\]
	This completes the proof.
\end{proof}

In this paper, we discretize the spatial variables using a Fourier spectral method. For system~\eqref{eq:linear-model}, this yields an ODE for each Fourier mode $\hat{\eta}(\mathbf{k},t)$:
\begin{equation}
\frac{d}{dt}\hat{\eta}(\mathbf{k},t) = \lambda(\mathbf{k})\hat{\eta}(\mathbf{k},t),
\end{equation}
where the growth rate $\lambda(\mathbf{k})$ is given by~\eqref{eq:dis}.
\begin{proposition}\label{proposition:DispDisc}
Let $\Delta t$ denote the time step. Applying  Runge--Kutta method to system~\eqref{eq:eta} yields
\begin{align}\label{eq:DisDis}
	e^{\Delta t \tilde{\lambda}(\mathbf{k})} = R\bigl(\Delta t \lambda(\mathbf{k})\bigr),
\end{align}
where $\tilde{\lambda}(\mathbf{k})$ is the discrete growth rate and $R(z)$ is the stability function.
\end{proposition}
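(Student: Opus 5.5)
The plan is to apply a general $s$-stage Runge--Kutta method $(a_{ij},b_i)$ to the scalar linear test equation~\eqref{eq:eta}. Fix a wavevector $\mathbf k$, write $\lambda=\lambda(\mathbf k)$ and $z=\Delta t\,\lambda$. Then read off the one-step amplification factor, and define the discrete growth rate by matching the numerical amplification to an exact exponential over one step. Since each Fourier mode decouples under the pseudo-spectral discretization, the whole argument is a scalar computation.

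The first step is to write the stage equations. They are
\[
H_i=\hat\eta^n+\Delta t\sum_{j=1}^s a_{ij}\lambda H_j,
\qquad
\hat\eta^{n+1}=\hat\eta^n+\Delta t\sum_{i=1}^s b_i\lambda H_i .
\]
In vector form, with $H=(H_1,\dots,H_s)^\top$, $\mathbf 1=(1,\dots,1)^\top$ and $A=(a_{ij})$, this reads $(I-zA)H=\mathbf 1\,\hat\eta^n$. Whenever $I-zA$ is invertible (for instance for all $z\le 0$ when the method is A-stable, or for $|z|$ small in general), we get $H=(I-zA)^{-1}\mathbf 1\,\hat\eta^n$. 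Substituting into the update gives
\[
\hat\eta^{n+1}=R(z)\,\hat\eta^n,\qquad R(z)=1+z\,b^\top(I-zA)^{-1}\mathbf 1 .
\]
This $R$ is, by definition, the stability function of the method. Equivalently, by Cramer's rule, $R(z)=\det(I-zA+z\mathbf 1b^\top)/\det(I-zA)$, which shows it is a rational function of $z$.

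The second step is to compare with the exact mode evolution. The exact solution satisfies $\hat\eta(t_{n+1})=e^{\Delta t\lambda}\hat\eta(t_n)$. The discrete growth rate $\tilde\lambda(\mathbf k)$ is defined as the rate for which the numerical solution behaves like an exact exponential, $\hat\eta^{n}=e^{n\Delta t\tilde\lambda}\hat\eta^0$. Iterating the first step gives $\hat\eta^n=R(z)^n\hat\eta^0$. Comparing the two one-step factors yields $e^{\Delta t\tilde\lambda(\mathbf k)}=R(\Delta t\lambda(\mathbf k))$, which is \eqref{eq:DisDis}. If $R(z)\neq0$, one may take $\tilde\lambda=\Delta t^{-1}\log R(z)$, using a complex logarithm if $R(z)<0$.

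The only delicate point is well-posedness: the solvability of $(I-zA)H=\mathbf 1\hat\eta^n$ and the nonvanishing of $R$ needed to define $\tilde\lambda$. I would handle this by stating the identity for $z$ outside the finitely many poles of $R$ (the eigenvalue reciprocals of $A$). I would also note that for the Gauss SRK methods used in the paper, $R$ is the diagonal Padé approximant of $e^z$. That approximant is A-stable, so every $z=\Delta t\lambda\le0$ is admissible, and for $\lambda>0$ the identity holds for $\Delta t$ small enough. The rest is direct substitution.
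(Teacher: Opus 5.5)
Your proposal is correct and follows the same route as the paper. You apply the RK method to the scalar mode equation, obtain $\hat\eta^{n+1}=R(\Delta t\lambda)\hat\eta^n$, and define $\tilde\lambda(\mathbf k)$ by matching this one-step factor to an exponential. You also supply details the paper omits: the explicit formula $R(z)=1+z\,b^\top(I-zA)^{-1}\mathbf 1$, and the correct normalization $\hat\eta^n=e^{n\Delta t\tilde\lambda}\hat\eta^0$ (the paper writes $\exp(\Delta t\tilde\lambda)$ without the factor $n$).

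One minor caveat concerns your parenthetical remark. A-stability of $R$ alone does not guarantee that $I-zA$ is invertible for all $z\le0$, because a zero of $\det(I-zA)$ can cancel in $R$. This is harmless, since your main statement is conditioned on invertibility, and the claim does hold for the Gauss methods the paper uses.
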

\begin{proof}
	Applying SRK to system~\eqref{eq:eta} leads to
	\begin{align}\label{eq:DisSemiODE}
		{\hat u}^{n+1}=R(\Delta t\lambda(\mathbf{k})){\hat u}^{n}.
	\end{align}
	Upon introducing the discrete growth rate $\tilde{\lambda}$ via ${\hat u}^{n}=U_0\exp(\Delta t \tilde{\lambda})$, Eq.~\eqref{eq:DisSemiODE} reduces to Eq.~\eqref{eq:DisDis}. This completes the proof of this proposition.
\end{proof}

It follows from Proposition~\ref{proposition:DispDisc} that the discrete amplification factors for explicit Euler ($\rm EE$), implicit Euler ($\rm IE$), and implicit midpoint methods ($\rm IM$) can be calculated by
\begin{equation}\label{eq:g_three_methods}
\begin{aligned}
	g_{\rm EE}(\mathbf{k})
	&=1-\Delta t\,M|\mathbf{k}|^2\Bigl(A_0+|\mathbf{k}|^2+\beta|\mathbf{k}|^4\Bigr),\\[0.5ex]
	g_{\rm IE}(\mathbf{k})
	&=\frac{1}{1+\Delta t\,M|\mathbf{k}|^2\Bigl(A_0+|\mathbf{k}|^2+\beta|\mathbf{k}|^4\Bigr)},\\[0.5ex]
	g_{\rm IM}(\mathbf{k})
	&=\frac{1-\frac{\Delta t}{2}M|\mathbf{k}|^2\Bigl(A_0+|\mathbf{k}|^2+\beta|\mathbf{k}|^4\Bigr)}
	{1+\frac{\Delta t}{2}M|\mathbf{k}|^2\Bigl(A_0+|\mathbf{k}|^2+\beta|\mathbf{k}|^4\Bigr)}.
\end{aligned}
\end{equation}
The exact amplification factor is $g_{\rm ex}(\mathbf{k}) = \exp(\Delta t \lambda(\mathbf{k}))$. The discrete amplification maps produced by the three schemes, together with the exact amplification map, are shown in Fig.~\ref{fig:comparison-iso}, where the solid curve indicates the contour level $g(\mathbf{k}) = 1$.
\begin{figure}[htbp]
\centering
\resizebox{0.7\textwidth}{!}{%
	\begin{minipage}{\textwidth}
		\centering
		
		\begin{subfigure}{0.49\linewidth}
			\centering
			\includegraphics[width=\linewidth]{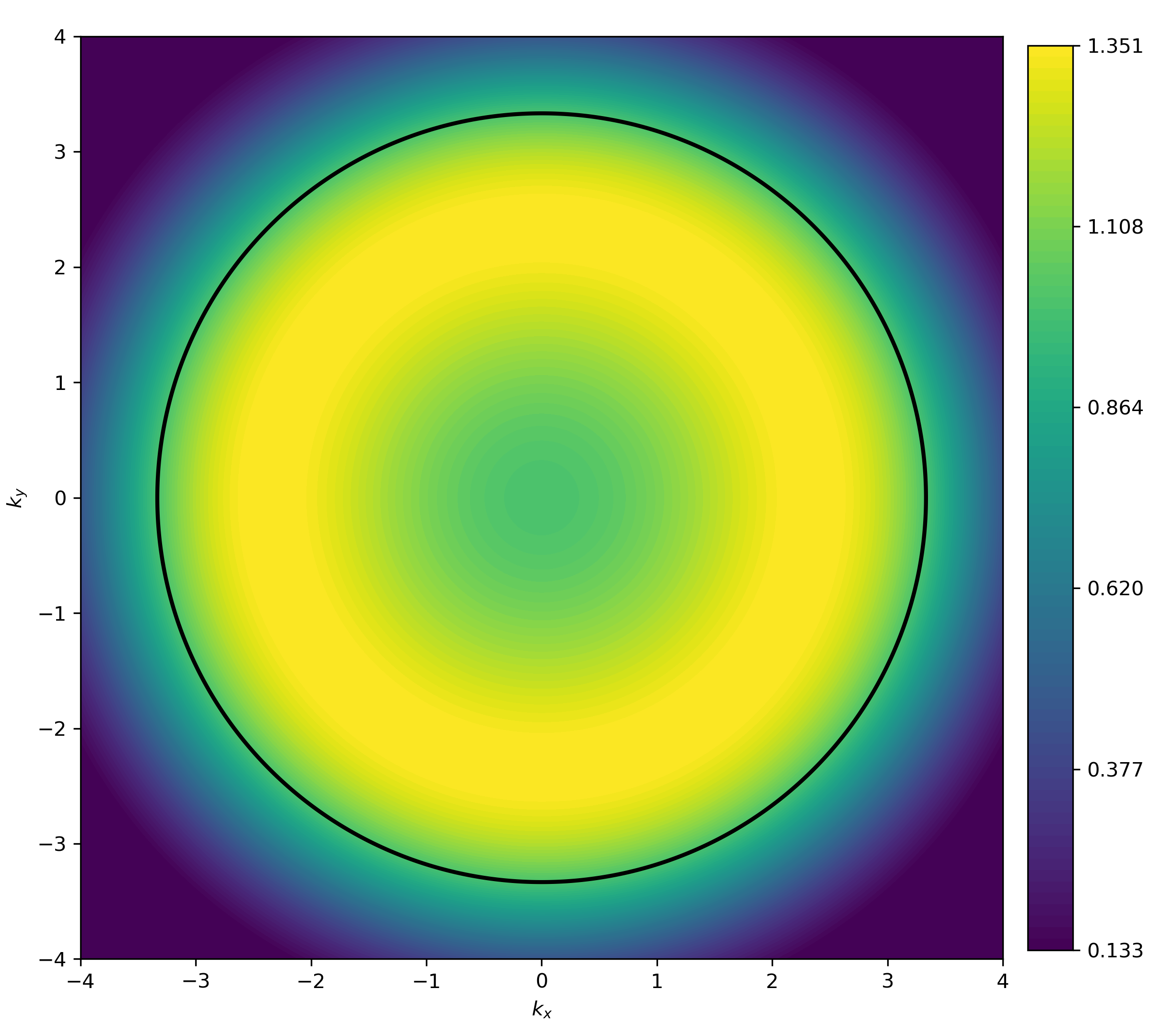}
		\end{subfigure}\hfill
		\begin{subfigure}{0.49\linewidth}
			\centering
			\includegraphics[width=\linewidth]{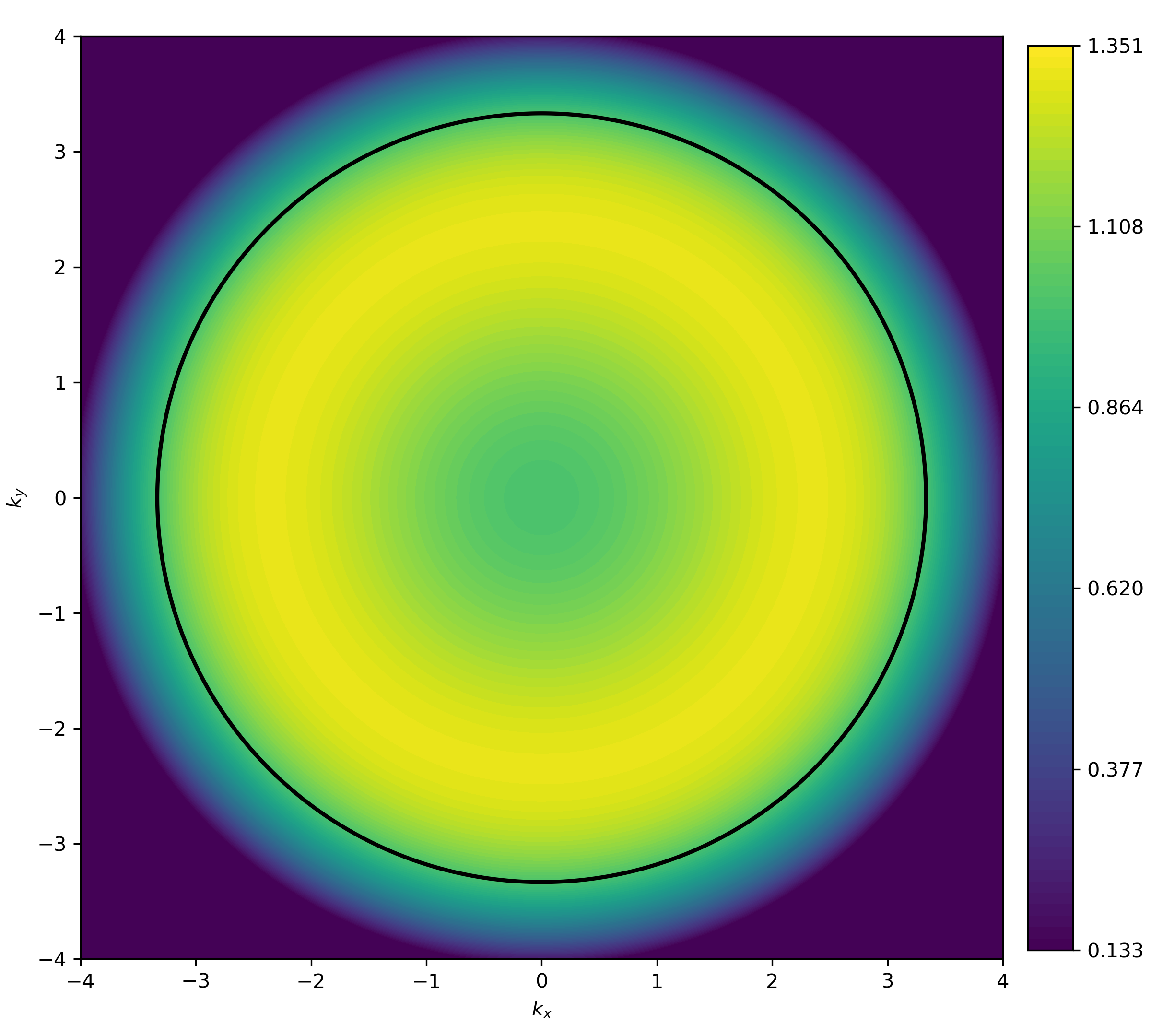}
		\end{subfigure}
		
		\vspace{0.6em}
		
		\begin{subfigure}{0.49\linewidth}
			\centering
			\includegraphics[width=\linewidth]{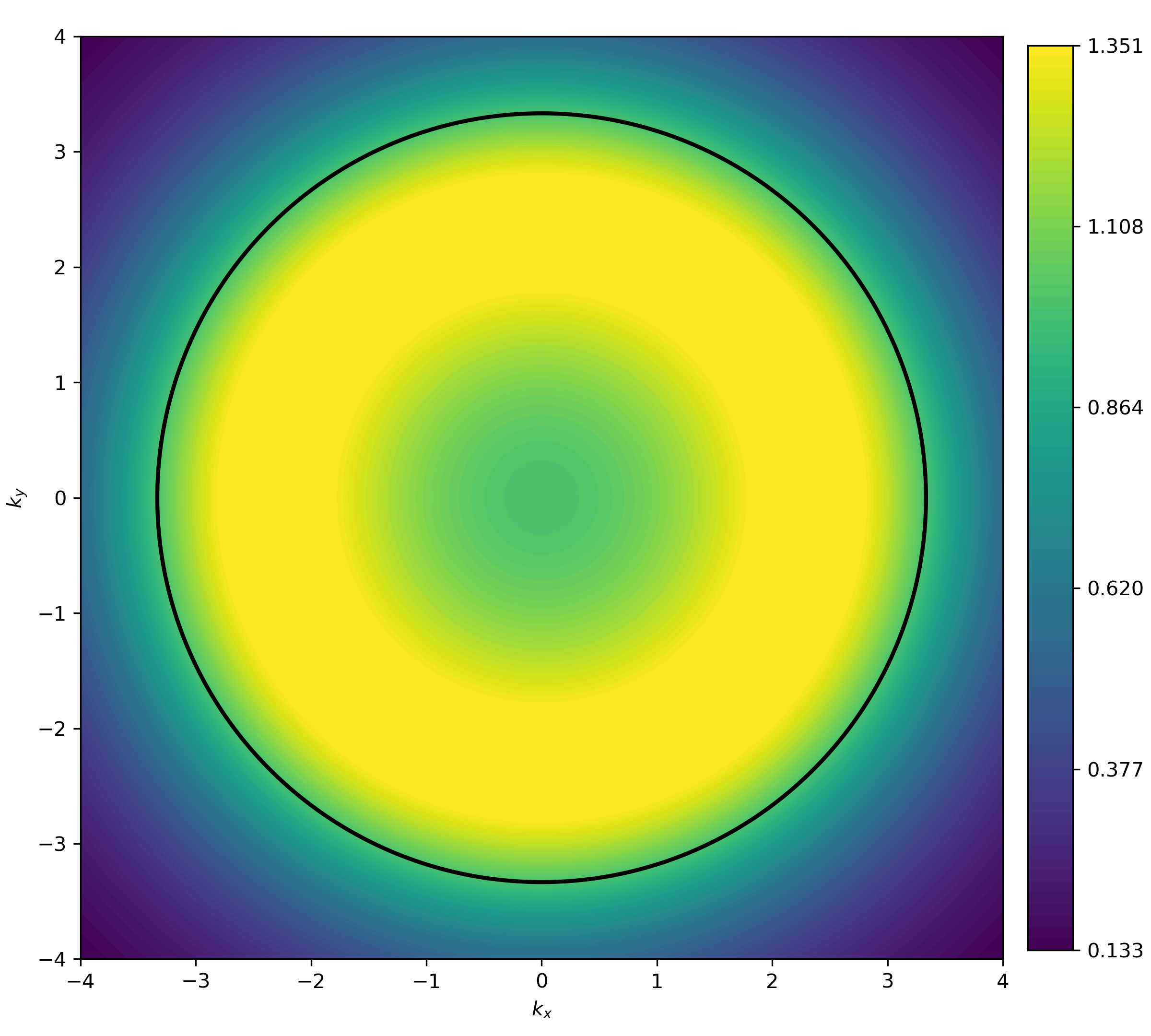}
		\end{subfigure}\hfill
		\begin{subfigure}{0.49\linewidth}
			\centering
			\includegraphics[width=\linewidth]{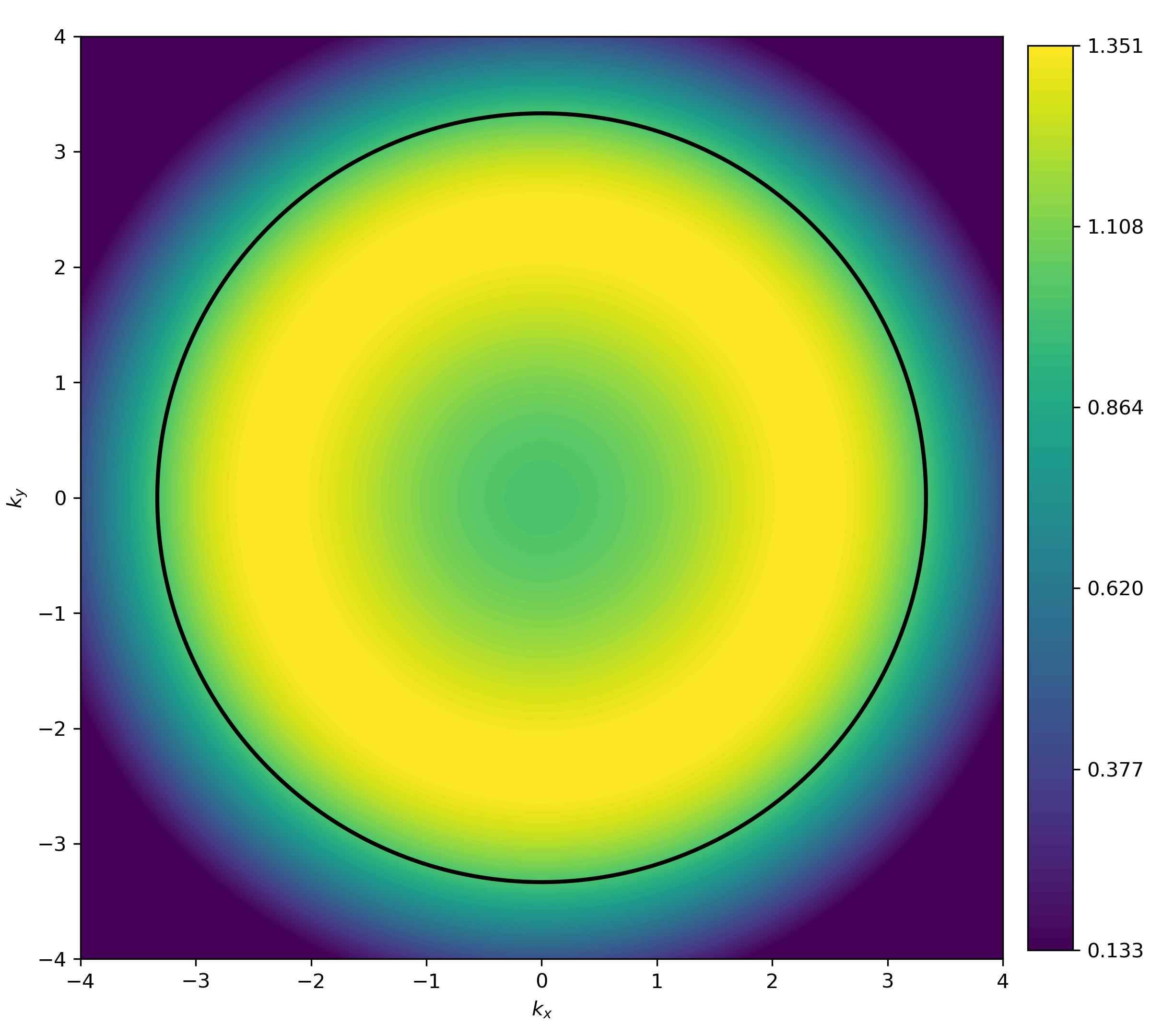}
		\end{subfigure}
		
	\end{minipage}%
	}
\caption{Contours of \(g(\mathbf{k})\) in the \((k_x,k_y)\)-plane for the isotropic case. Top left: exact; top right: explicit Euler; bottom left: implicit Euler; bottom right: implicit midpoint.}
\label{fig:comparison-iso}
\end{figure}
As shown in Fig.~\ref{fig:comparison-iso}, the explicit Euler scheme may produce  $|g_{\rm EE}(\mathbf{k})|>1$ for large wavenumbers $|\mathbf{k}|$, causing high-frequency modes to become unstable. In contrast, the implicit Euler scheme is stable for any $\Delta t$, but it excessively damps the solution. The implicit midpoint scheme is a much closer match to the exact map, providing a second-order accurate and A-stable approximation. As shown in the plot, it better preserves the shape of the level sets in the $(k_x,k_y)$ plane.

\paragraph{Spinodal instability}
From the dispersion relation derived above, we obtain the growth rate of each Fourier mode. This relation allows us to analyze the dependence of $\lambda(\mathbf{k})$ on the homogeneous background state $u_0$, and hence to distinguish the spinodal regime from the stable regime. Spinodal instability occurs when a band of wavenumbers yields $\lambda(\mathbf{k})>0$. This causes the growth of small perturbations.

We consider three homogeneous background states, namely $u_0 = 0,\ 0.70,\ 0.95$, and set the initial data as $\bar u = u_0 + \delta \xi$ with $\xi \sim \mathcal{N}(0,1)$ at each grid point.

Fig.~\ref{fig:spinodal} illustrates the spinodal instability criterion for the isotropic CH model. The homogeneous state is linearly unstable if and only if $F''(u_0) < 0$. In our tests, $u_0 = 0$ yields $F''(u_0) = -1 < 0$; consequently, random perturbations are amplified, and phase separation patterns emerge. In contrast, for $u_0 = 0.70$ and $u_0 = 0.95$, we have $F''(u_0) > 0$, so $\lambda(k) \le 0$ for all $k$. Hence, the perturbations decay and the solution remains nearly homogeneous.

\begin{figure}[htbp]
\centering
\includegraphics[width=\linewidth]{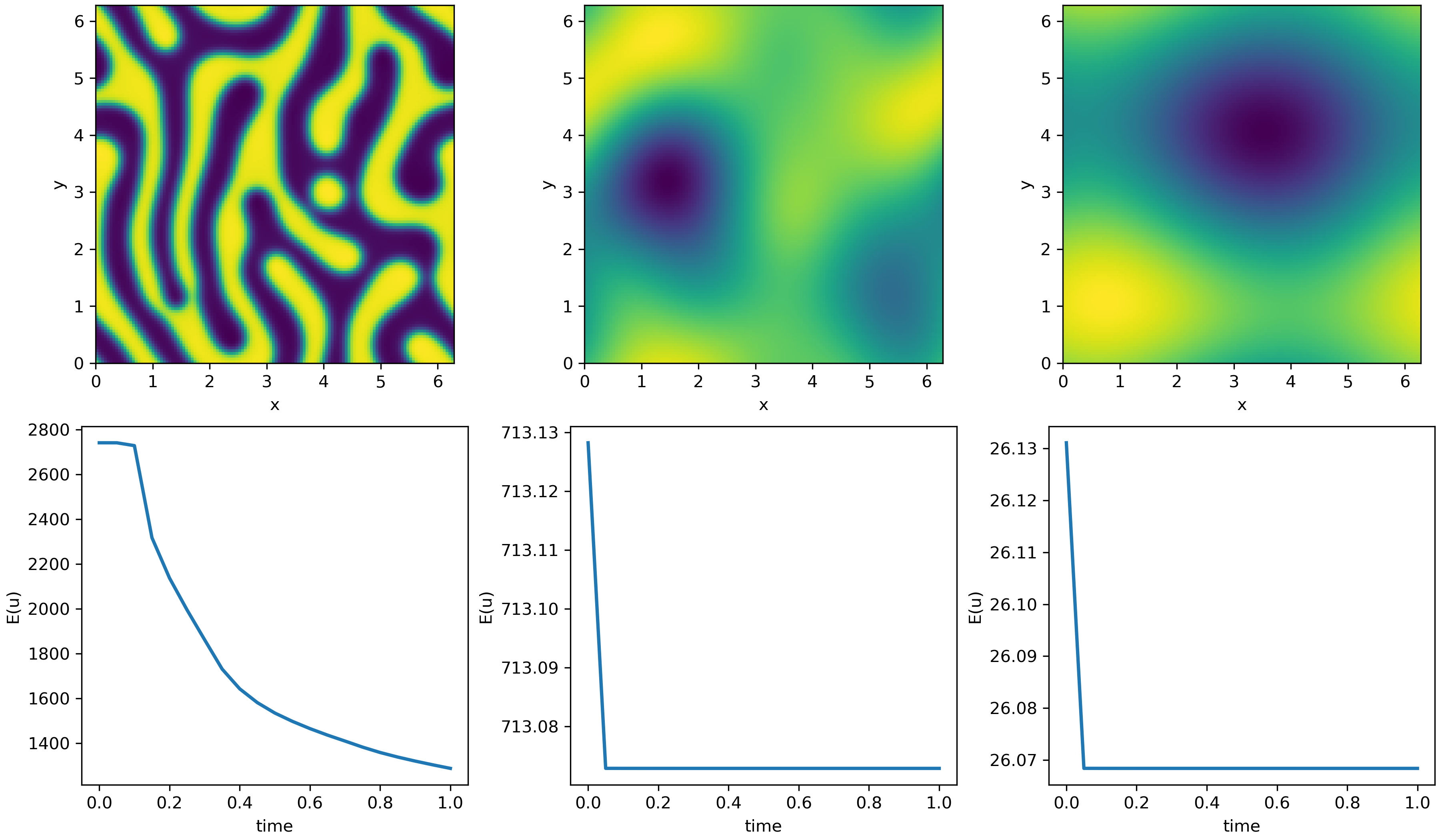}
\caption{Phase separation and energy decay with different initial conditions.}
\label{fig:spinodal}
\end{figure}

\paragraph{Coarsening law for the Isotropic CH model} 
In multi-phase dynamical systems, coarsening is commonly described by a
power-law relation for the characteristic length scale. Below, we present the standard scaling law for the isotropic CH model~\eqref{eq:CH-iso}.

\begin{proposition}[Coarsening scaling law]\label{theorem:coarsening_iso_CH}
Consider the isotropic CH equation~\eqref{eq:CH-iso} on a periodic domain $\Omega$ with constant mobility $M>0$. Let $\bar E(t)=\frac{E(t)}{|\Omega|}$ denote the energy density.  In the late-stage coarsening regime, the dynamics are governed by a single length scale $R(t)$, and the energy is dominated by interfacial contributions, yielding the scaling law
\[
\bar E(t)\sim \frac{\sigma}{R(t)},
\]
where \(\sigma>0\) denotes the surface tension. Then
\[
R(t)\sim (M\sigma t)^{1/3},
\qquad
\bar E(t)\sim M^{-1/3}\sigma^{2/3}t^{-1/3}.
\]
\end{proposition}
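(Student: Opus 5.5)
This is a scaling (dimensional-balance) argument, not a rigorous theorem. I would combine the energy-dissipation identity \eqref{eq:iso-EnDis} with the single-length-scale hypothesis and the assumption $\bar E\sim\sigma/R$. The idea is to estimate both sides of $\frac{d}{dt}\bar E=-\frac{M}{|\Omega|}\int_\Omega|\nabla\mu|^2\,dx$ in terms of $R(t)$ and $\dot R(t)$, and then integrate the resulting ODE for $R$.

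\textbf{Step 1: the left-hand side.} Differentiating the ansatz $\bar E(t)\sim \sigma/R(t)$ gives
\[
\frac{d\bar E}{dt}\sim -\frac{\sigma \dot R}{R^2}.
\]

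\textbf{Step 2: the size of $\nabla\mu$.} This is the main step. In the late stage, $u\approx\pm1$ in the bulk phases, and $\mu$ is quasi-static. By the Gibbs--Thomson relation, $\mu$ on an interface is of order $\sigma\kappa\sim\sigma/R$ (the $\varepsilon$-scalings are absorbed into $\sigma$). Away from interfaces $\mu$ is approximately harmonic, since $u_t\approx 0$ there forces $\Delta\mu\approx 0$. Its variations therefore occur over the only available length $R$, so $|\nabla\mu|\sim \sigma/R^2$ on a volume fraction of order one. This gives
\[
\frac{M}{|\Omega|}\int_\Omega|\nabla\mu|^2\,dx\sim \frac{M\sigma^2}{R^4}.
\]
I expect the justification of this estimate to be the main obstacle. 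The interfacial region has a volume fraction of only about $\varepsilon/R$, so I need to check that the inner layer does not dominate the dissipation. I would handle this by noting that $\mu$ is continuous across the interface to leading order (it is constant through the layer), so its gradient there is also of order $\sigma/R^2$. The contribution of the inner layer is then lower order.

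\textbf{Step 3: balance and integrate.} Equating Steps 1 and 2,
\[
\frac{\sigma\dot R}{R^2}\sim\frac{M\sigma^2}{R^4},
\]
which gives $R^2\dot R\sim M\sigma$. Integrating, $R^3\sim 3M\sigma t$ for large $t$, since the initial length becomes negligible. Hence $R\sim (M\sigma t)^{1/3}$.

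\textbf{Step 4: the energy law.} Substituting back,
\[
\bar E\sim \frac{\sigma}{(M\sigma t)^{1/3}}=M^{-1/3}\sigma^{2/3}t^{-1/3}.
\]

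As a consistency check, I would note that the same exponent follows from the mass-flux argument: $\dot R$ equals the interface velocity, which is of order $M|\nabla\mu|\sim M\sigma/R^2$. This is the Lifshitz--Slyozov--Wagner mechanism. The agreement of the two routes confirms that the estimate in Step 2 is the correct scaling.
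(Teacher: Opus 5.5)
Your proposal is correct and uses essentially the same scaling argument as the paper. You differentiate $\bar E\sim\sigma/R$ and use Gibbs--Thomson with $\kappa\sim 1/R$ to get $|\nabla\mu|\sim\sigma/R^2$; you then balance this against the dissipation $M\int_\Omega|\nabla\mu|^2\,dx/|\Omega|\sim M\sigma^2/R^4$ and integrate $R^2\dot R\sim M\sigma$. Your extra remarks (bulk harmonicity of $\mu$, the inner layer contributing only at lower order, and the flux cross-check $\dot R\sim M|\nabla\mu|$) are sound refinements that the paper omits, but they do not change the route.
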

\begin{proof}
Differentiating the scaling relation $\bar E(t)\sim \sigma/R(t)$ leads to
\begin{align}\label{eq:1}
	\frac{\mathrm d \bar E}{\mathrm d t}\sim -\frac{\sigma}{R(t)^2}\,\frac{\mathrm d R}{\mathrm d t}.
\end{align}
The Gibbs-Thomson curvature relation gives \(\mu\sim \sigma\kappa\), and under the single-length-scale hypothesis \(\kappa\sim 1/R(t)\), hence \(\mu\sim \sigma/R(t)\). Because $\mu$ varies over a length scale of order $R(t)$ in the late-stage regime, it follows that $|\nabla\mu| \sim \frac{\mu}{R(t)} \sim \frac{\sigma}{R(t)^2}$.
Thus, the energy dissipation per unit volume satisfies
\begin{align}\label{eq:2}
	-\frac{\mathrm d \bar E}{\mathrm d t}
	=\frac{M}{|\Omega|}\int_\Omega |\nabla\mu|^2\,\mathrm d x
	\sim M\Bigl(\frac{\sigma}{R(t)^2}\Bigr)^2.
\end{align}
Combining Eqs.~\eqref{eq:1} and~\eqref{eq:2} yields 
$$
\frac{\mathrm d R}{\mathrm d t}\sim \frac{M\sigma}{R(t)^2}.
$$
Integrating this scaling ODE gives $R(t)\sim (M\sigma\,t)^{1/3}$, and substituting back into $\bar E(t)\sim \sigma/R(t)$ obtains  $\bar E(t)\sim M^{-1/3}\sigma^{2/3}t^{-1/3}.$ This completes the scaling argument.
\end{proof}

We take a random initial condition on \(\Omega=[0,2\pi]^2\), with values uniformly distributed in \([-1,1]\). We simulate system~\eqref{eq:CH-iso} with $\varepsilon=0.3$ and $M=1$ using the proposed AIQ method. Figure~\ref{fig:coarsening-rate} shows that the energy density \(\bar E(t)\) exhibits the predicted \(t^{-1/3}\) decay, and the fitted line has slope \(-1/3\) in the log--log plot.

\begin{figure}[htbp]
\centering
	\includegraphics[width=0.7\linewidth]{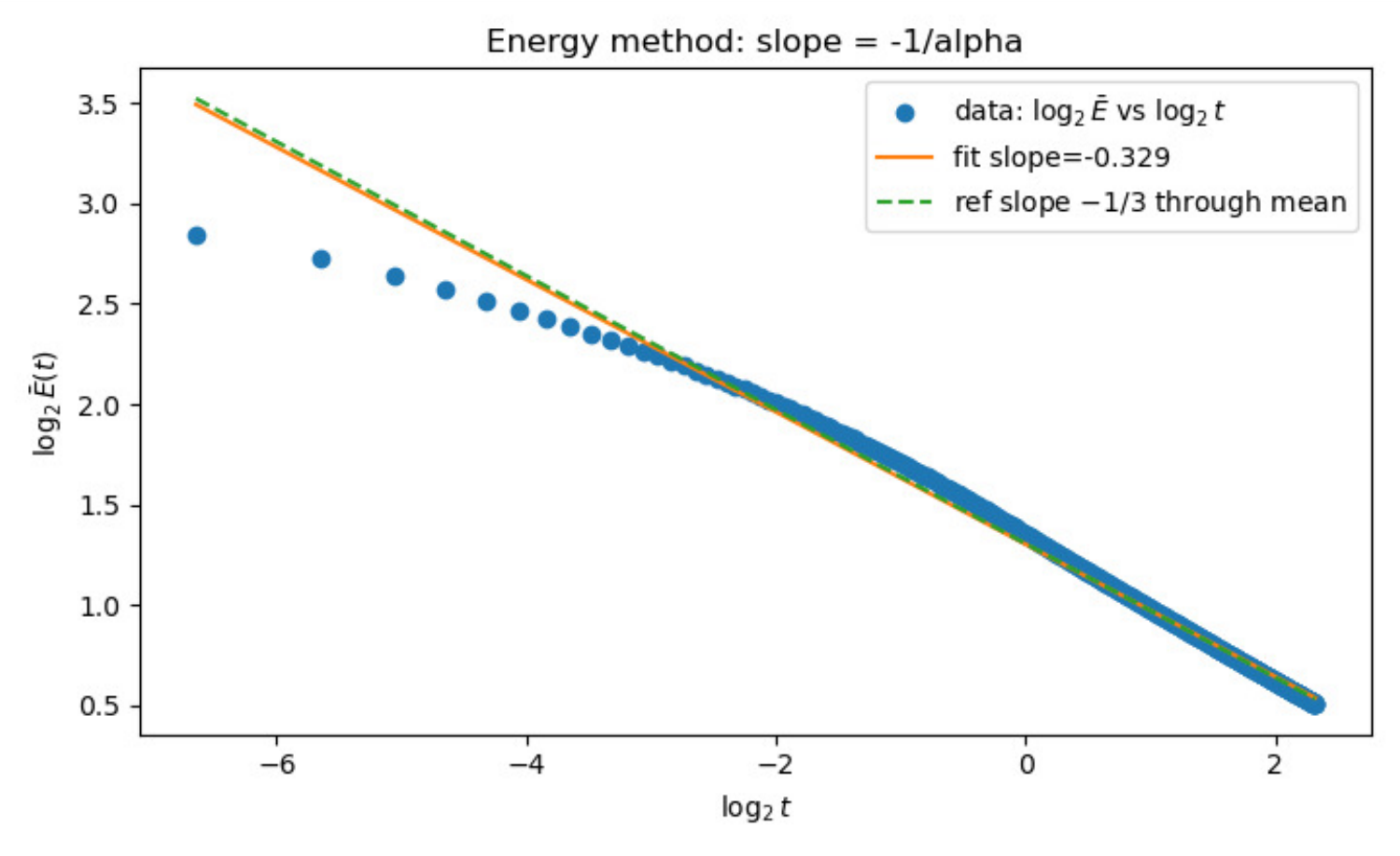}
\caption{Coarsening rate of the isotropic CH equation with $M=1$.}
\label{fig:coarsening-rate}
\end{figure}

\paragraph{Dispersion analysis for the  anisotropic CH equation}
In many crystalline applications, isotropic surface energy proves inadequate for describing faceted morphologies and directionally preferred growth. This motivates the investigation of the anisotropic CH model~\eqref{eq:CH-ani} and its associated dispersion behavior.

Similar to the isotropic case, we linearize the anisotropic model about a homogeneous state. Each Fourier mode $\hat{\eta}(\mathbf{k},t)$ satisfies the scalar ODE~\eqref{eq:eta}, where  
\begin{equation}\label{eq:disp-ani}
\lambda(\mathbf k)
= -M k^2 \Gamma(\theta_{\mathbf{k}})
\bigl(k^2 + \varepsilon^{-2}F''(u_0) + \beta k^4\bigr),
\qquad
\Gamma(\theta_{\mathbf{k}})
=1+\alpha\,\frac{k_x^4-6k_x^2k_y^2+k_y^4}{(k_x^2+k_y^2)^2}.
\end{equation}

Since the anisotropy factor $\Gamma(\theta_{\mathbf{k}})$ is nonnegative, the zeros of $\lambda(\mathbf{k})$ are determined by $k^2 + \varepsilon^{-2}F''(u_0) + \beta k^4 = 0$. In particular, for $\beta = 0$ equation~\eqref{eq:disp-ani} reduces to
$$
\lambda(\mathbf k)=-k^2\,\Gamma(\theta_{\mathbf k})\bigl(k^2+\varepsilon^{-2}F''(u_0)\bigr),
$$
which gives  the spinodal criterion for $F''(u_0)<0$.

If $\beta>0$, setting $s=k^2\ge 0$ gives
$$
\lambda((\mathbf k))=-s\Gamma(\theta_{\mathbf k})\bigl(\varepsilon^{-2}F''(u_0)+s+\beta s^2\bigr).
$$
When $F''(u_0)>0$, one has $\lambda<0$ for all $k$, hence in this case the system is linearly stable. If $F''(u_0)<0$, there exists a positive root $s_+>0$ given by
$$
s_{+}=
\frac{-1+\sqrt{1-4\beta\,\varepsilon^{-2}F''(u_0)}}{2\beta}.
$$

Thus, the system is unstable for \(0<k<\sqrt{s_+}\), which is similar to the isotropic case. For visualization, Fig.~\ref{fig:dispersion-ani} shows a representative directional slice with \(\theta=0\) and \(u_0=0\). Since anisotropy breaks rotational invariance, this slice is used only as a reference direction. Here \(u_0\) is chosen in the spinodal region, so that the unstable band of wave numbers is clearly visible.
\begin{figure}[htbp]
\centering
\begin{subfigure}[t]{0.5\textwidth}
	\centering
	\includegraphics[width=\linewidth]{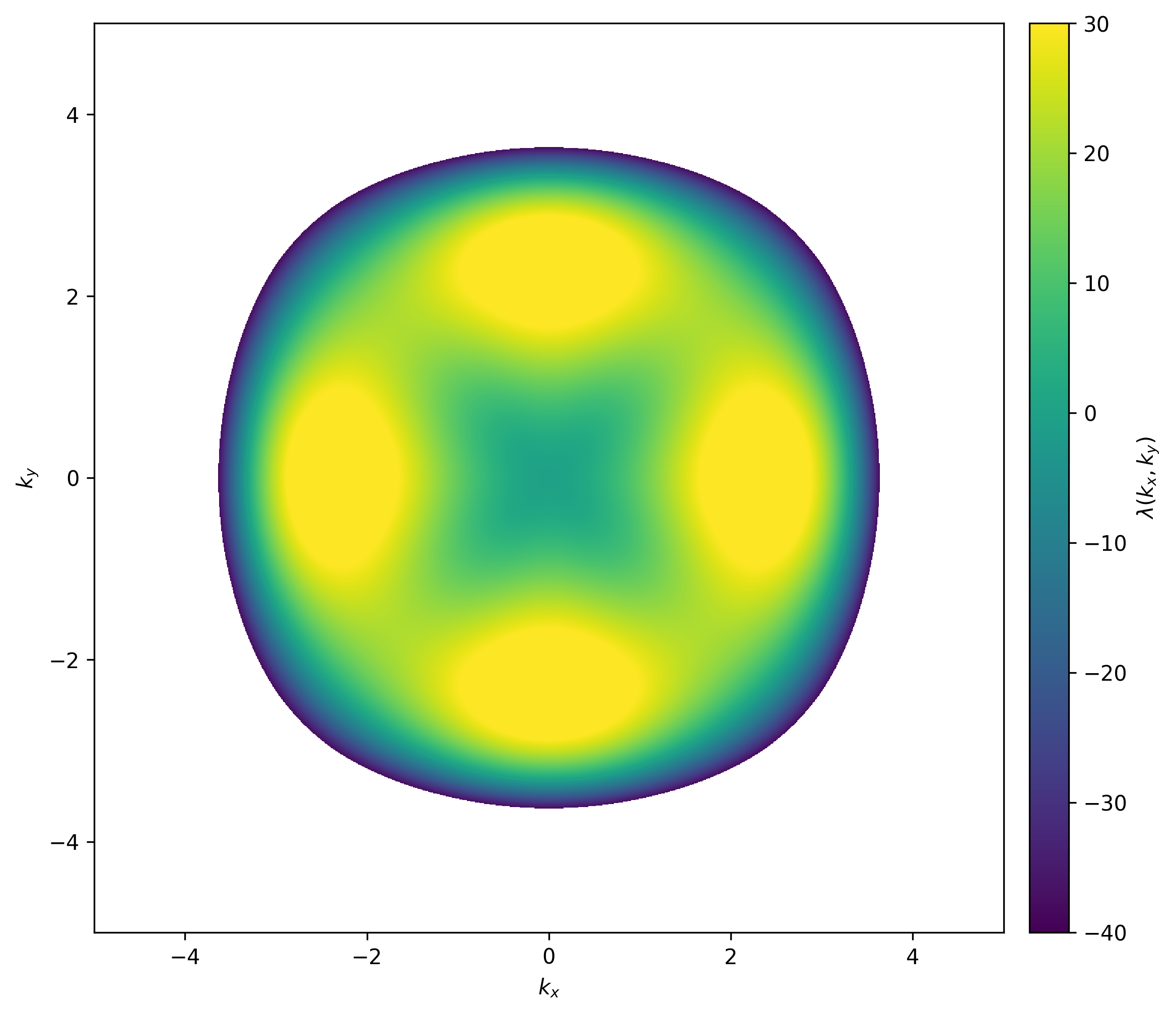}
\end{subfigure}\hfill
\begin{subfigure}[t]{0.45\textwidth}
	\includegraphics[width=\linewidth]{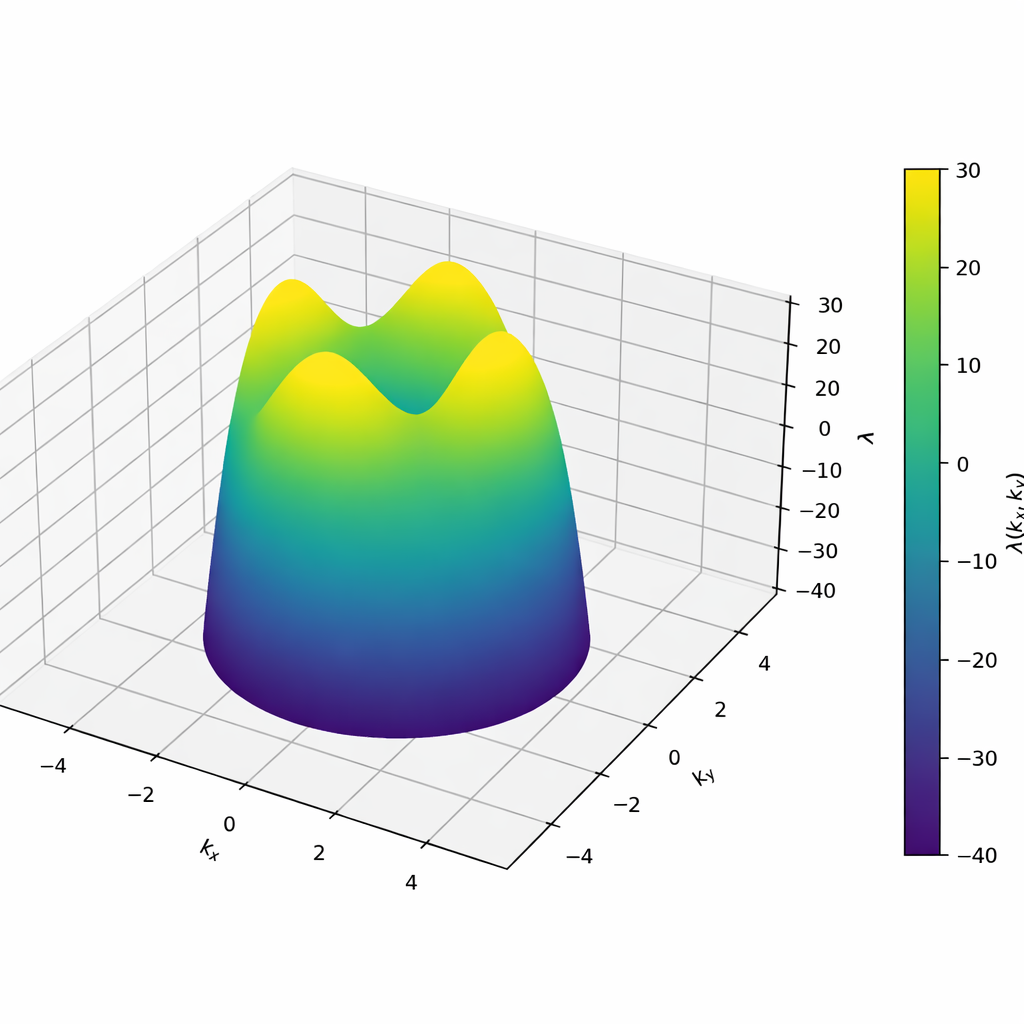}
\end{subfigure}
\caption{Dispersion relation of the linearized anisotropic CH equation with $u_0=0$ and $\theta=0$. Left: Contour plot of $\lambda(\mathbf{k})$;
	Right: 3D surface of $\lambda(\mathbf{k})$. }
\label{fig:dispersion-ani}
\end{figure}

Similar to the isotropic case, the equation~\eqref{eq:eta} yields the exact amplification factor $g_{\rm ex}(\mathbf{k})=\exp\!\bigl(\Delta t\,\lambda(\mathbf{k})\bigr)$.
Applying explicit Euler (EE), implicit Euler (IE), and implicit midpoint (IM) to this mode ODE yields the discrete amplification factors

\begin{equation}\label{eq:g_aniso_three_methods} 
\begin{aligned} 
	g_{\rm EE}(\mathbf k) &=1-\Delta t\,M k^2 \Gamma(\theta_{\mathbf{k}}) \bigl(k^2+\varepsilon^{-2}F''(u_0)+\beta k^4\bigr),\\[0.5ex] g_{\rm IE}(\mathbf k) &=\frac{1}{1+\Delta t\,M k^2 \Gamma(\theta_{\mathbf{k}}) \bigl(k^2+\varepsilon^{-2}F''(u_0)+\beta k^4\bigr)},\\[0.5ex] g_{\rm IM}(\mathbf k) &=\frac{1-\frac{\Delta t}{2}M k^2 \Gamma(\theta_{\mathbf{k}}) \bigl(k^2+\varepsilon^{-2}F''(u_0)+\beta k^4\bigr)} {1+\frac{\Delta t}{2}M k^2 \Gamma(\theta_{\mathbf{k}}) \bigl(k^2+\varepsilon^{-2}F''(u_0)+\beta k^4\bigr)}.
\end{aligned} 
\end{equation}
In Fig.~\ref{fig:comparison-aniso}, we compare the exact amplification factor \(g_{\rm ex}\) with the numerical factors \(g_{\rm EE}\), \(g_{\rm IE}\), and \(g_{\rm IM}\) on the \((k_x,k_y)\) plane. The results show that the implicit midpoint rule better preserves the directional level-set structure of the exact amplification factor than the implicit Euler method.
\begin{figure}[htbp]
	\centering
	\resizebox{0.7\textwidth}{!}{%
		\begin{minipage}{\textwidth}
			\centering
			
			\begin{subfigure}{0.49\linewidth}
				\centering
				\includegraphics[width=\linewidth]{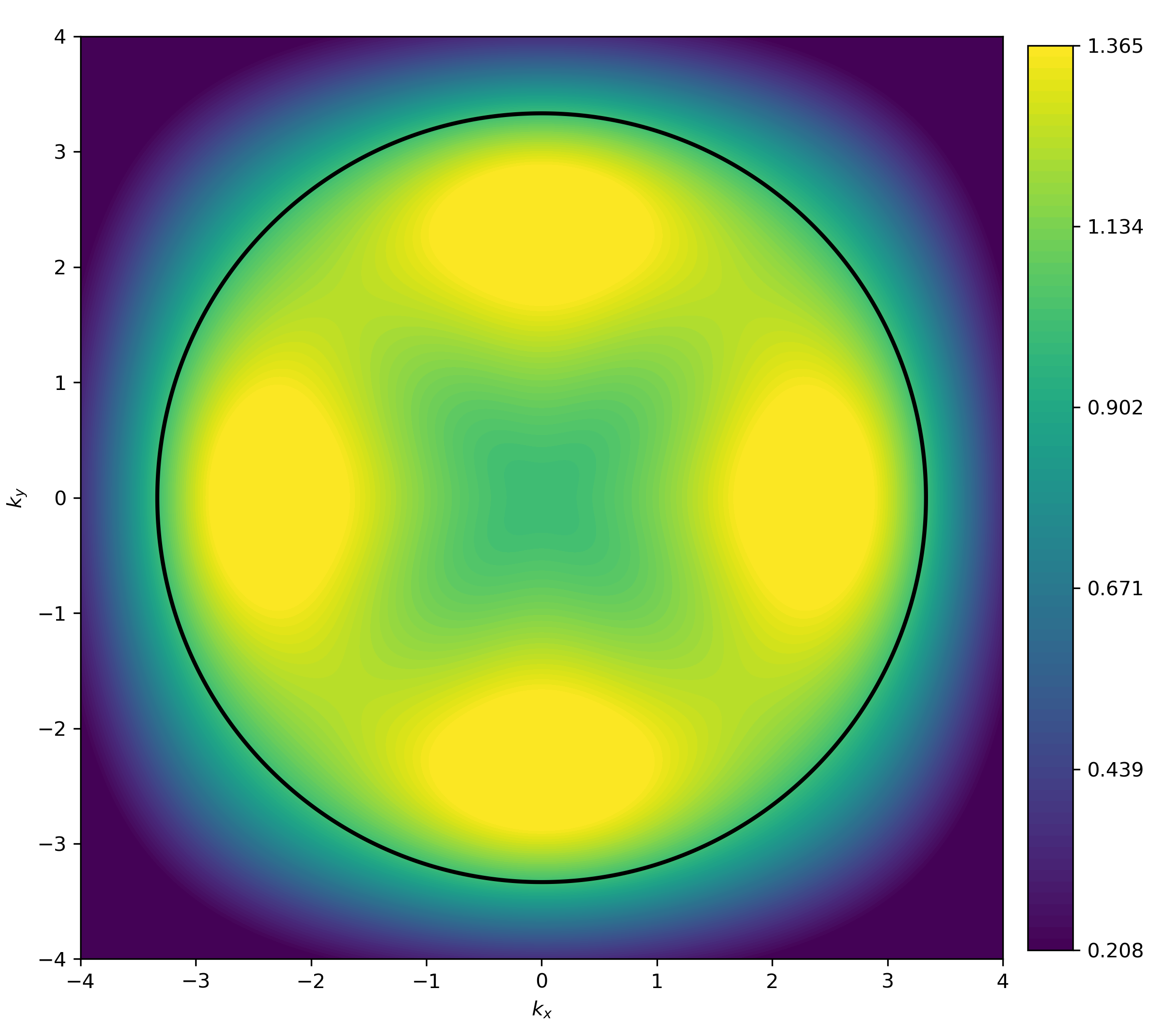}
			\end{subfigure}\hfill
			\begin{subfigure}{0.49\linewidth}
				\centering
				\includegraphics[width=\linewidth]{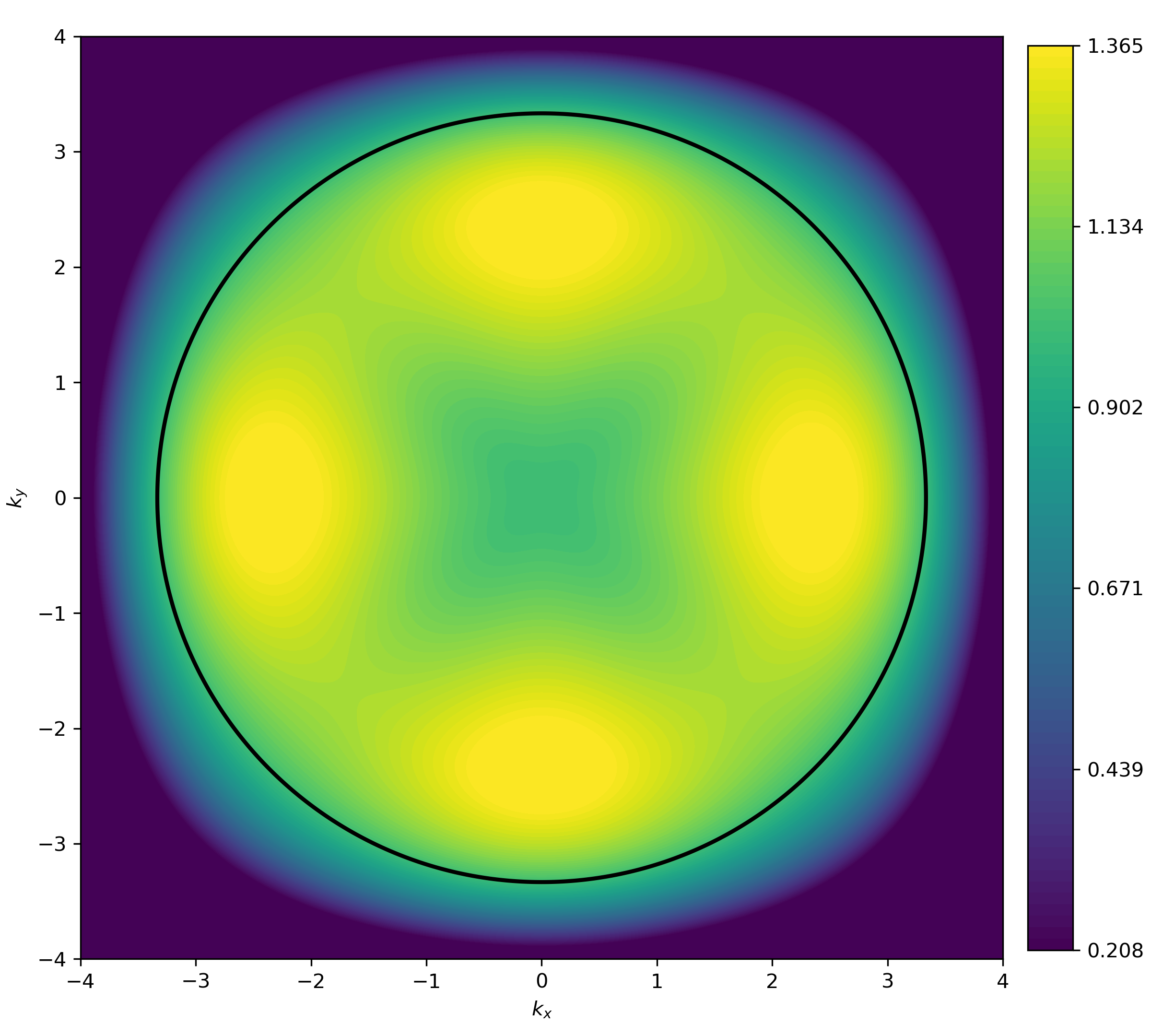}
			\end{subfigure}
			
			\vspace{0.6em}
			
			\begin{subfigure}{0.49\linewidth}
				\centering
				\includegraphics[width=\linewidth]{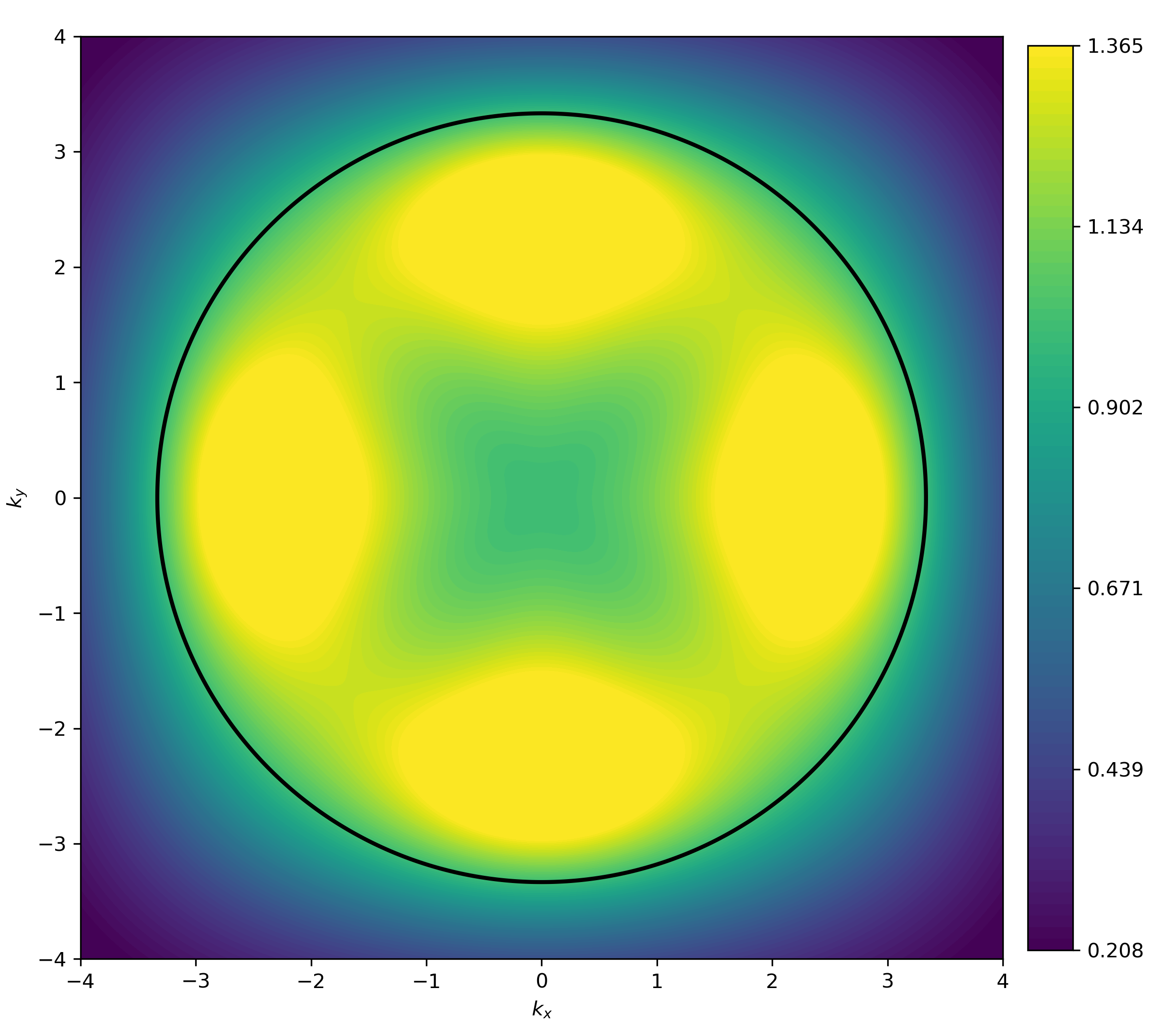}
			\end{subfigure}\hfill
			\begin{subfigure}{0.49\linewidth}
				\centering
				\includegraphics[width=\linewidth]{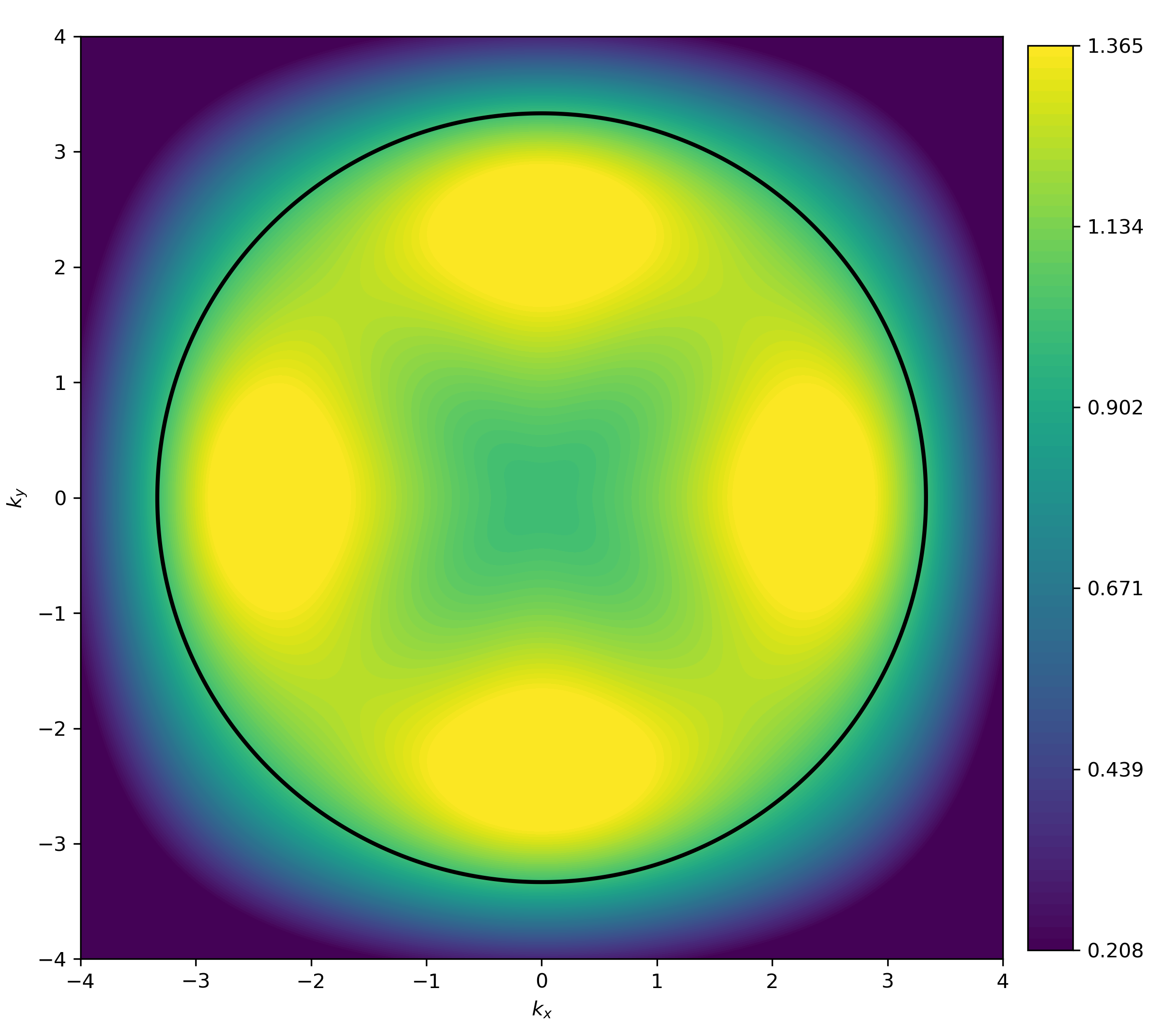}
			\end{subfigure}
			
		\end{minipage}%
	}
\caption{Discrete dispersion relation of the linearized anisotropic CH equation with $u_0=0$ and $\theta=0$. Top left: exact amplification factor; top right: explicit Euler; bottom left: implicit Euler; bottom right: midpoint method.}
\label{fig:comparison-aniso}
\end{figure}
\paragraph{Equilibrium shapes and missing orientations}
The Gibbs--Thomson relation describes the dependence of the chemical potential on the curvature of an interface. For a 2D interface,  it is described as
\[
\mu=\mu_0+v_m\bigl(\Gamma(\theta)+\Gamma_{\theta\theta}(\theta)\bigr)\,K,
\] where  $\Gamma(\theta)$ is the anisotropic surface energy and $\mu$ is the interfacial chemical potential.  At equilibrium, $\mu \equiv \mu_e$ is constant, and the equilibrium crystal corresponds to the Wulff shape of $\Gamma(\theta)$. A regular Wulff shape (with no missing orientations) requires the surface stiffness to be nonnegative, which implies
\begin{equation}\label{eq:stiffness}
\Gamma(\theta)+\Gamma_{\theta\theta}(\theta)\ge 0.
\end{equation}
If Eq.~\eqref{eq:stiffness} is violated over an angular interval, the associated high-energy orientations are missing from the equilibrium interface. Geometrically, the naive parametric Wulff curve then contains unstable branches, often called "ears" \cite{CahnHoffman1974,Herring1951,HoffmanCahn1972,Sekerka2005}. The physical Wulff shape is then obtained by convexifying the construction and eliminating these branches.

For the twofold anisotropy, \(\Gamma(\theta;\alpha)=1+\alpha\cos(2\theta)\), so that \(\Gamma+\Gamma_{\theta\theta}=1-3\alpha\cos(2\theta)\), and the corresponding critical value is \(\alpha_c=\frac13\). For the fourfold anisotropy, \(\Gamma(\theta;\alpha)=1+\alpha\cos(4\theta)\), so that \(\Gamma+\Gamma_{\theta\theta}=1-15\alpha\cos(4\theta)\), and the corresponding critical value is \(\alpha_c=\frac{1}{15}\), where \(\alpha_c\) denotes the critical value at which the stiffness~\eqref{eq:stiffness} first vanishes.
We next study how the anisotropy strength $\alpha$ affects the equilibrium morphology and the missing-orientation behavior. 
The results are illustrated in Fig.~\ref{fig:Alpha2-mix}-- \ref{fig:Alpha4-mix}. Different values of \(\alpha\) give rise to distinct interface shapes and may induce missing-orientation behavior.

\begin{figure}[htbp]
\centering
\begin{subfigure}[t]{0.32\textwidth}
	\centering
	\includegraphics[width=\textwidth]{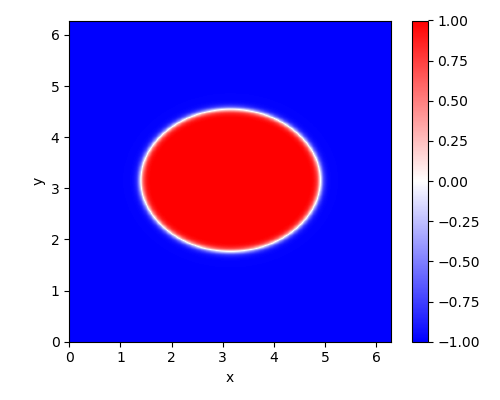}
\end{subfigure}\hfill
\begin{subfigure}[t]{0.66\textwidth}
	\centering
	\includegraphics[width=\textwidth]{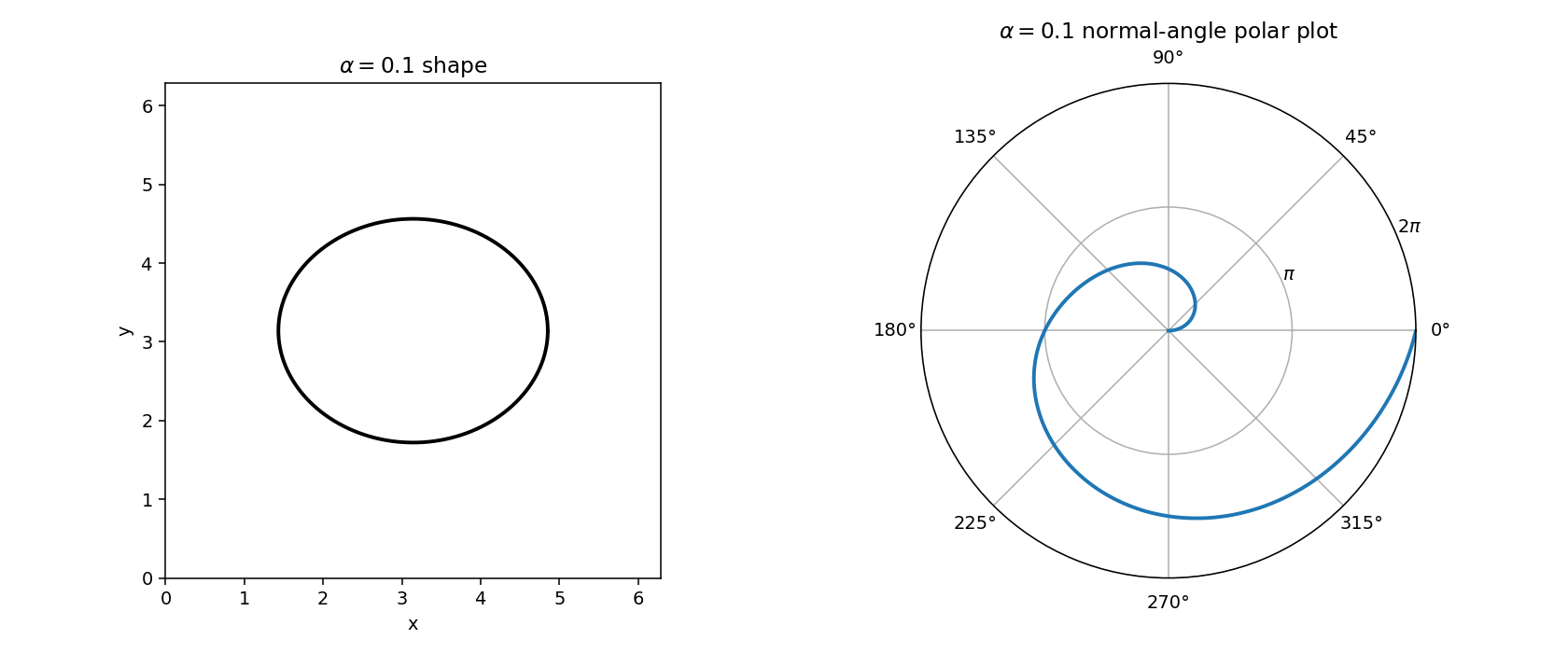}
\end{subfigure}

\vspace{0.5em}

\begin{subfigure}[t]{0.32\textwidth}
	\centering
	\includegraphics[width=\textwidth]{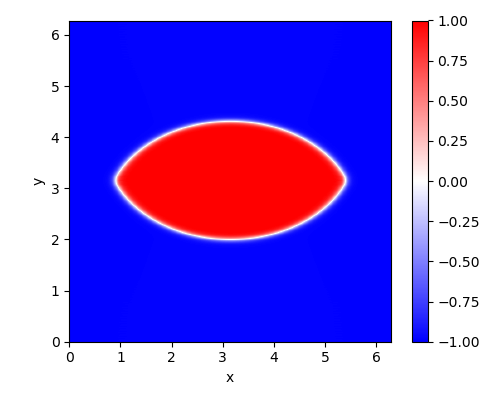}
\end{subfigure}\hfill
\begin{subfigure}[t]{0.66\textwidth}
	\centering
	\includegraphics[width=\textwidth]{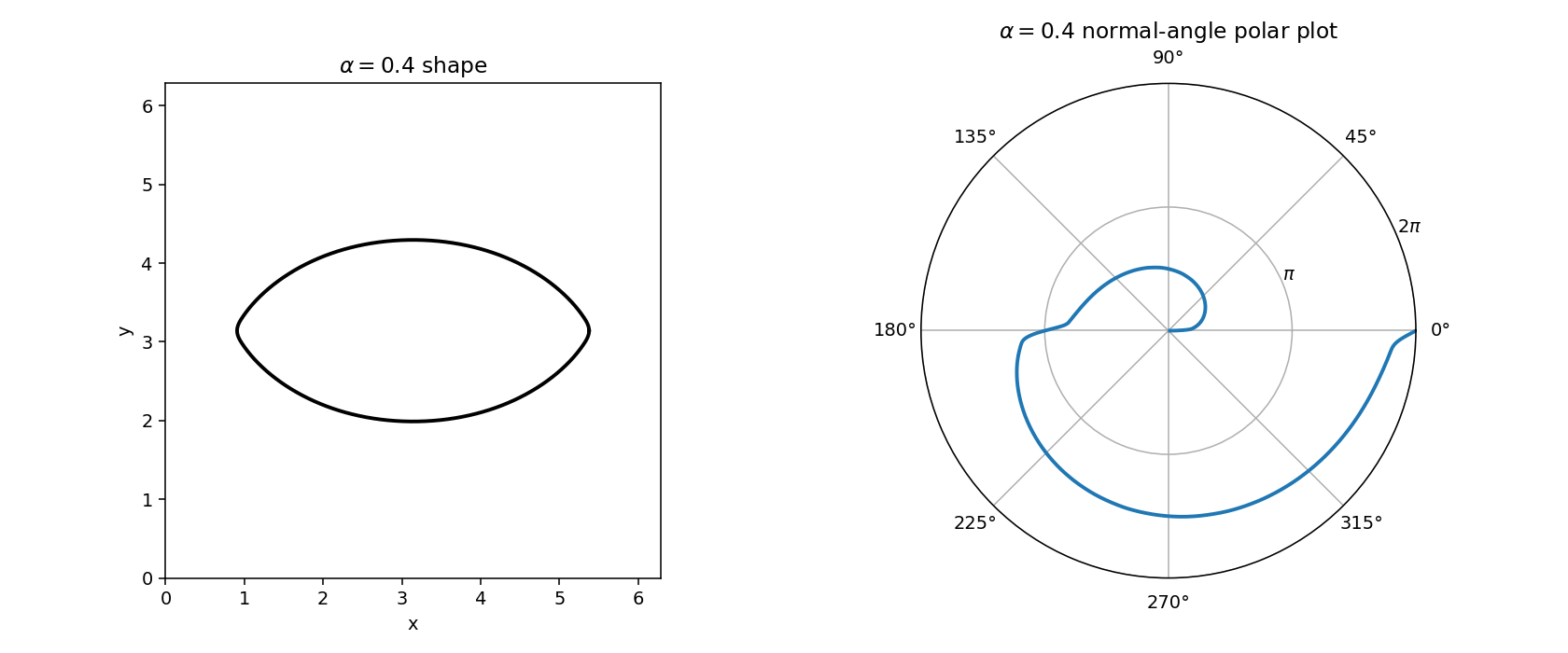}
\end{subfigure}
\caption{Morphologies and normal-angle polar plots for the twofold anisotropic CH model.}
\label{fig:Alpha2-mix}
\end{figure}

\begin{figure}[htbp]
\centering
\begin{subfigure}[t]{0.32\textwidth}
	\centering
	\includegraphics[width=\textwidth]{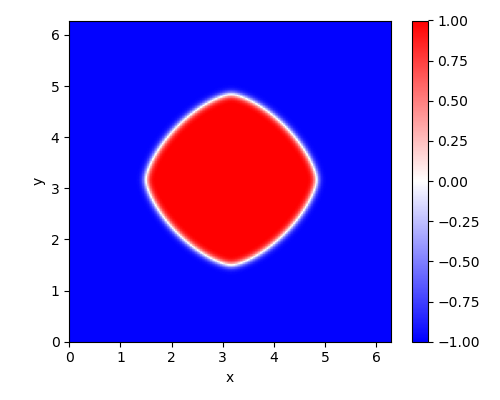}
\end{subfigure}\hfill
\begin{subfigure}[t]{0.66\textwidth}
	\centering
	\includegraphics[width=\textwidth]{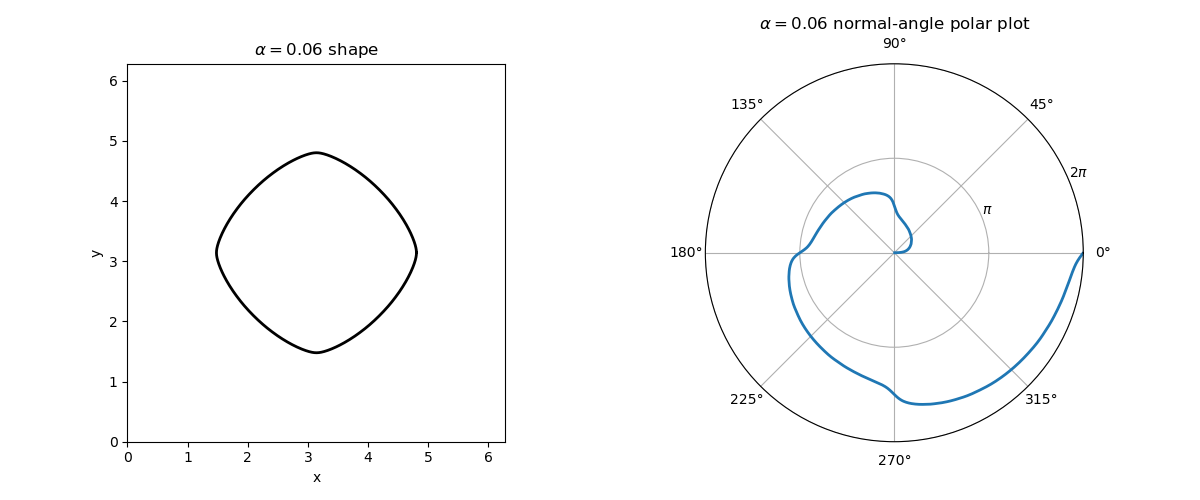}
\end{subfigure}

\vspace{0.5em}

\begin{subfigure}[t]{0.32\textwidth}
	\centering
	\includegraphics[width=\textwidth]{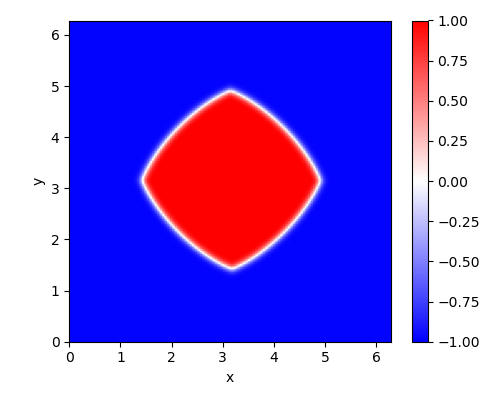}
\end{subfigure}\hfill
\begin{subfigure}[t]{0.66\textwidth}
	\centering
	\includegraphics[width=\textwidth]{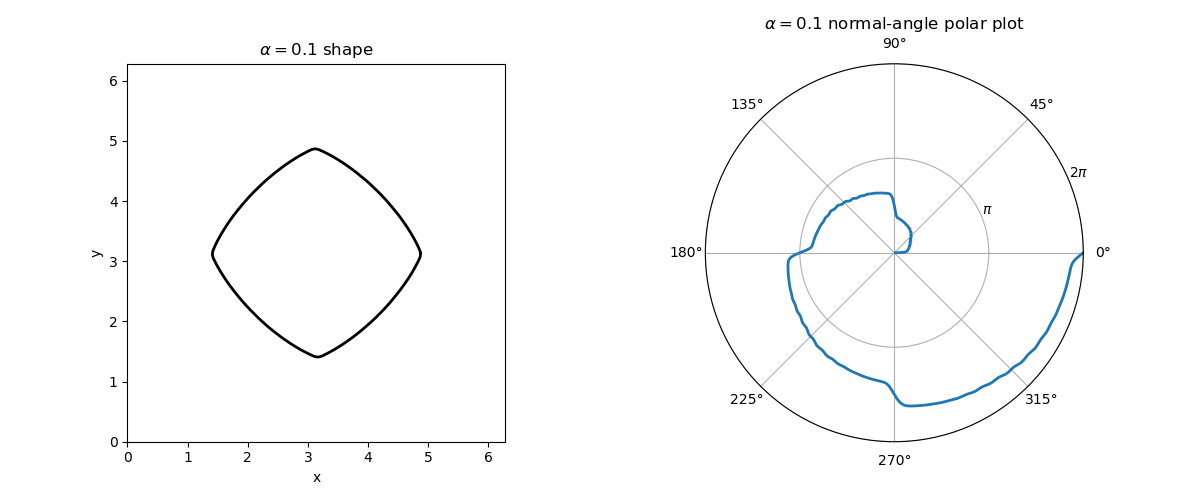}
\end{subfigure}
\caption{Morphologies and normal-angle polar plots for the fourfold anisotropic CH model.}
\label{fig:Alpha4-mix}
\end{figure}

Figures~\ref{fig:Alpha2-mix}--\ref{fig:Alpha4-mix} show the interface shapes and the corresponding normal-angle polar plots for different anisotropy strengths. It is observed that when Eq.~\eqref{eq:stiffness} is satisfied, the boundary is smooth and the normal-angle polar plot covers the angular range continuously, so that every orientation appears on the interface. In contrast, when Eq.~\eqref{eq:stiffness} is violated, the boundary becomes visibly faceted, and the normal-angle polar plot exhibits a gap-like loss of admissible directions. This indicates that some normal orientations are missing.
For the twofold and fourfold anisotropies, the changes occur across the
critical values \(\alpha_c=1/3\) and \(\alpha_c=1/15\), respectively. The gaps in	the normal-angle polar plots are therefore consistent with the stiffness criterion~\eqref{eq:stiffness}.
\section{Numerical Experiments}
In the previous sections, we developed fully discrete AIQ schemes for the isotropic and anisotropic CH equations. In this section, we examine the numerical performance of the proposed method. In the computations below, the AIQ scheme is implemented with the implicit midpoint rule, which is a second-order symplectic Runge--Kutta method. For the isotropic model, we compare AIQ with two representative second-order auxiliary-variable methods: the stabilized invariant energy quadratization (S-IEQ) method~\cite{Yang2017IEQ} and the scalar auxiliary variable (SAV) method~\cite{Shen2018SAV}. The comparison focuses on the temporal convergence of the solution error, the dissipation behavior of the discrete energy, and the convergence behavior of the original-energy error.

Unless otherwise specified, all numerical experiments are carried out on the periodic domain $\Omega=[0,2\pi]^2$. For the isotropic case, the initial condition is chosen as
\[
u_0(x,y)=0.05\left[\cos(3x)\cos(4y)
+
\bigl(\cos(4x)\cos(3y)\big)^2
+
\cos(x-5y)\cos(2x-y)\right].
\]
This smooth multi-mode initial condition is used as a benchmark for assessing the temporal accuracy, energy-dissipation behavior, and auxiliary-variable consistency of the numerical schemes. All schemes are implemented using the same spatial discretization and time-step sizes. 

To examine temporal accuracy, we compute numerical solutions with
$\Delta t=5\times10^{-5},\,2.5\times10^{-5},\,1.25\times10^{-5}$
on a fixed spatial grid. To further compare the energy behavior, we examine the energy evolution of the AIQ, S-IEQ, and SAV schemes. The results are shown in Fig.~\ref{fig:iso-compare}.
\begin{figure}[htbp]
\centering
\begin{subfigure}[t]{0.32\textwidth}
	\centering
	\includegraphics[width=\textwidth]{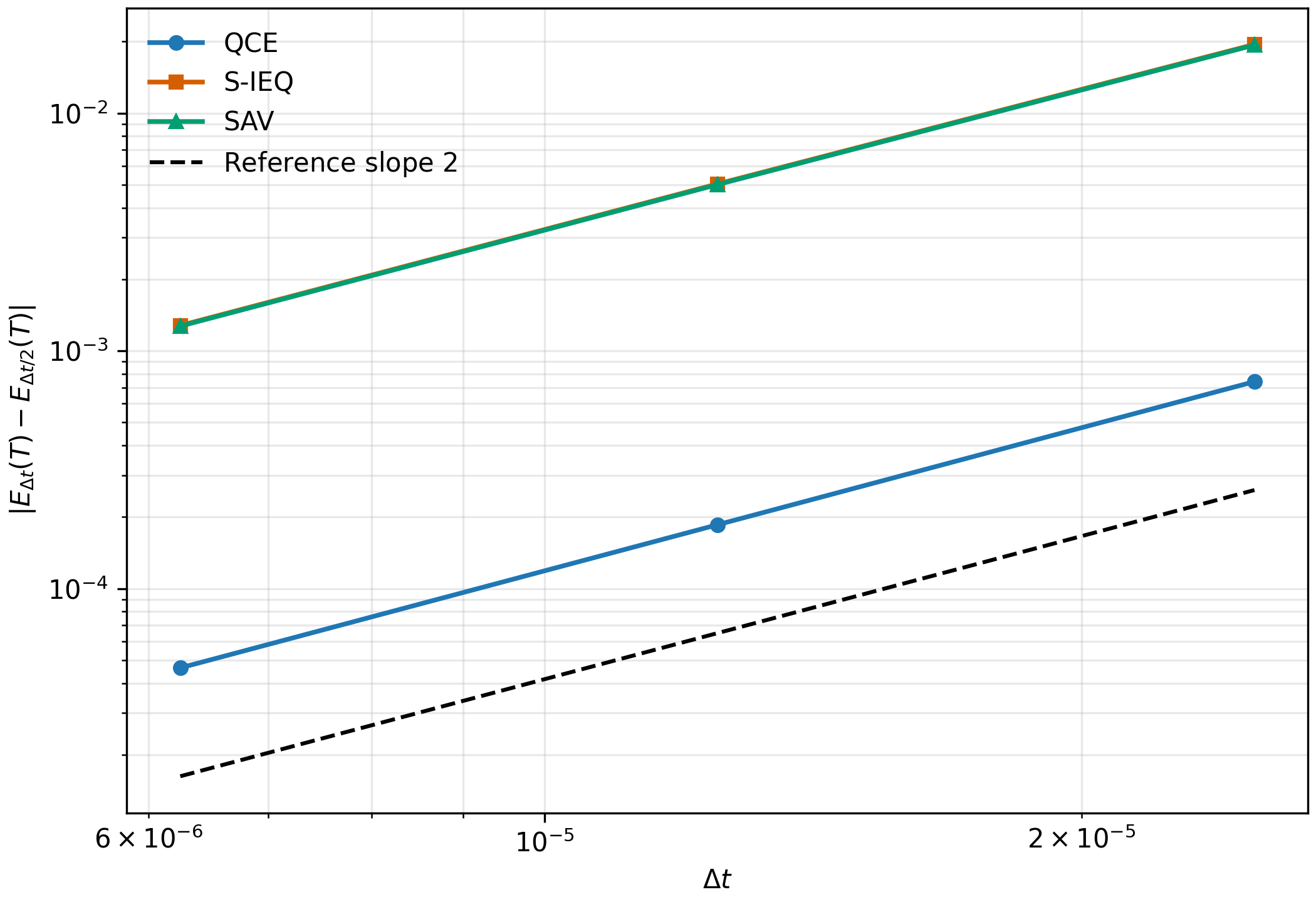}
	\caption{Solution-error}
\end{subfigure}
\hfill
\begin{subfigure}[t]{0.32\textwidth}
	\centering
	\makebox[\textwidth][c]{%
	\begin{tikzpicture}
		\node[inner sep=0pt] (iso-energy) {\includegraphics[width=\textwidth]{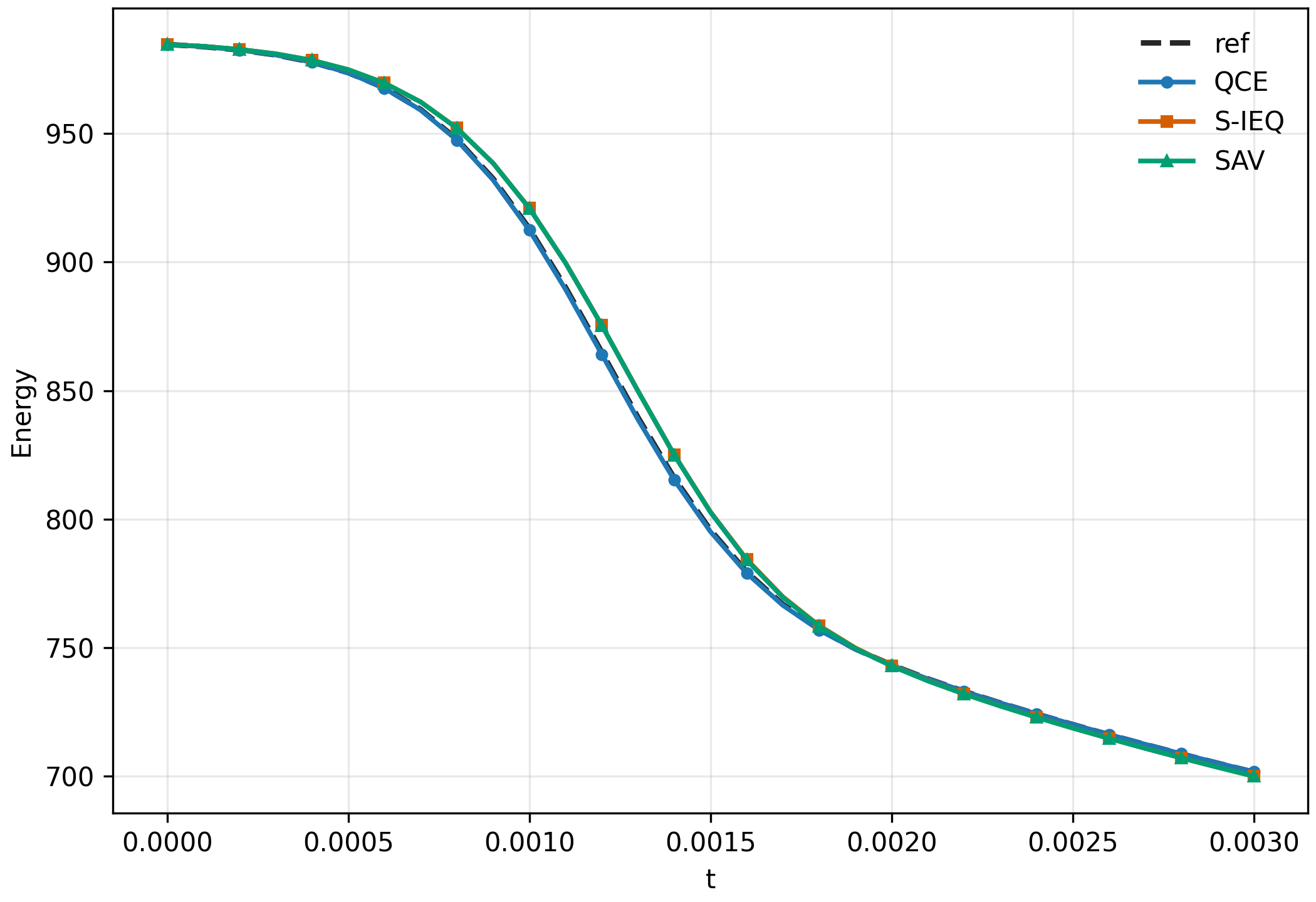}};
		\path[use as bounding box] (iso-energy.south west) rectangle (iso-energy.north east);
		\node[anchor=center,inner sep=0pt,draw=black,line width=0.2pt,fill=white] (iso-energy-zoom)
			at ([xshift=-0.16\textwidth,yshift=-0.10\textwidth]iso-energy.center)
			{\includegraphics[width=0.38\textwidth]{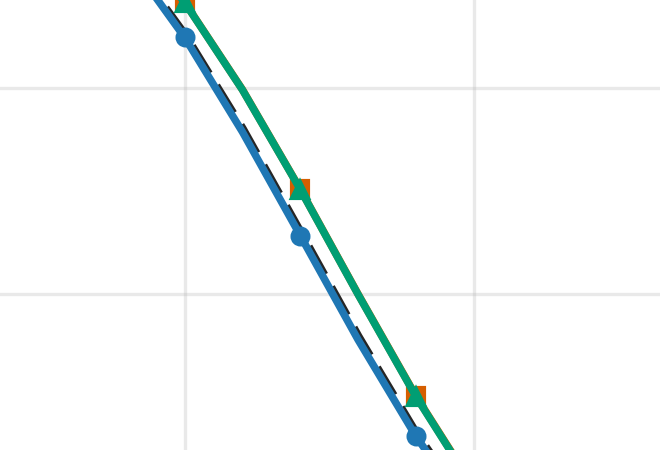}};
		\draw[-{Stealth[length=1.8mm]},line width=0.25pt]
			(iso-energy-zoom.east) -- ([xshift=0.00\textwidth,yshift=0.00\textwidth]iso-energy.center);
	\end{tikzpicture}
	}
	\caption{Original energy}
\end{subfigure}
\hfill
\begin{subfigure}[t]{0.32\textwidth}
	\centering
	\includegraphics[width=\textwidth]{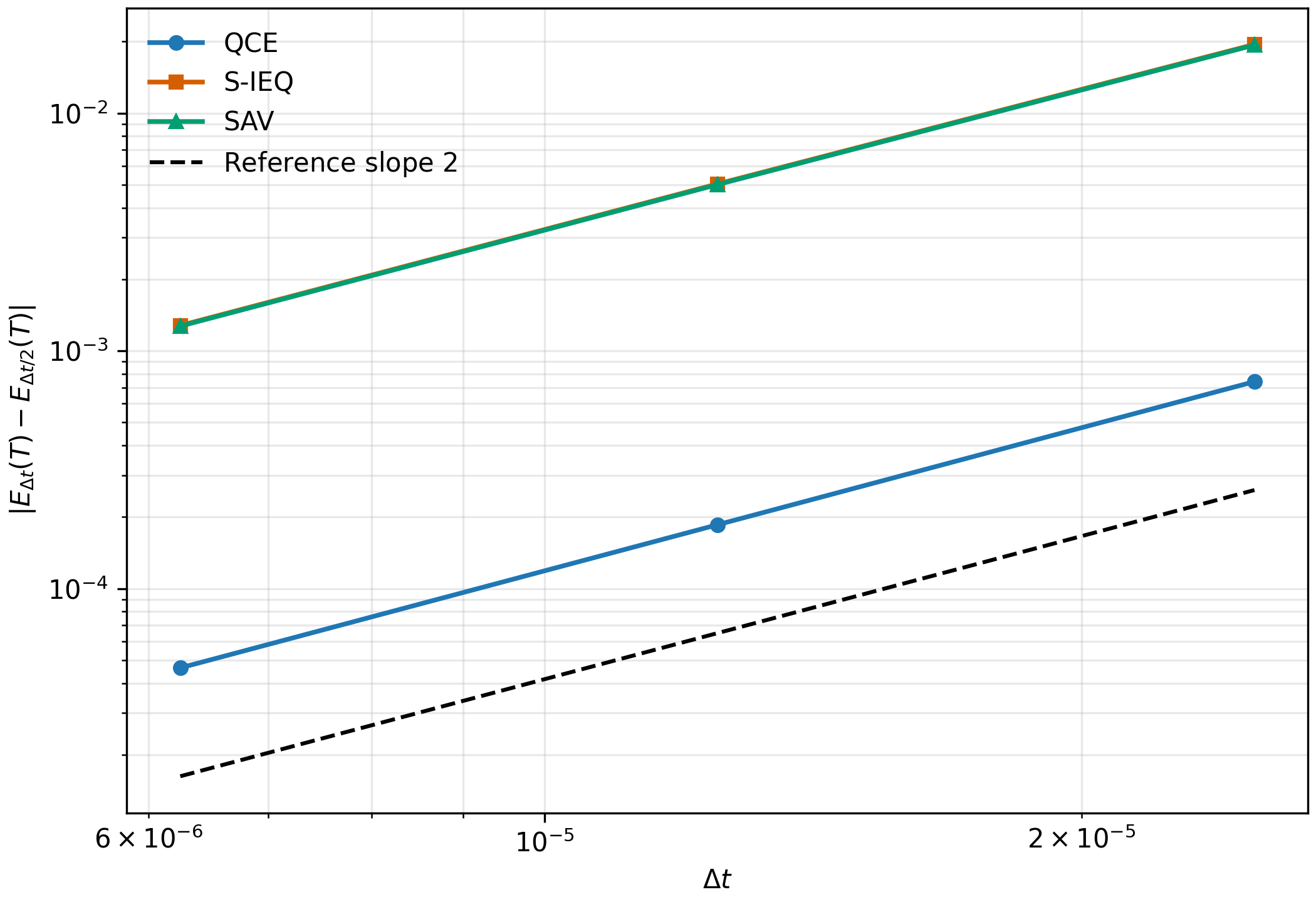}
	\caption{Original-energy error}
\end{subfigure}
\caption{Numerical comparison for the isotropic CH equation.}
\label{fig:iso-compare}
\end{figure}

Figure~\ref{fig:iso-compare}(a) shows that all three methods converge at approximately second order in time, while AIQ has a smaller error constant. Compared with the reference solution, Figure~\ref{fig:iso-compare}(b) indicates that the AIQ method remains closer to the reference energy evolution than those of S-IEQ and SAV. All methods preserve the decreasing trend of the energy, but they differ in their accuracy for the original energy evolution. In particular, the AIQ scheme tracks the reference energy curve more closely. Figure~\ref{fig:iso-compare}(c) shows the temporal convergence of the original-energy error. These results indicate that AIQ better preserves the original energy structure in this test.

We also verify that the AIQ formulation can be combined with higher-order SRK time discretizations. As an example, we consider the fourth-order AIQ scheme obtained from the two-stage Gauss--Legendre tableau The test uses the same smooth isotropic initial condition with \(T=2\times10^{-3}\), and \(\Delta t=2.5\times10^{-5},1.25\times10^{-5},6.25\times10^{-6},3.125\times10^{-6}\). Figure~\ref{fig:QCE-gauss4-order} shows the expected fourth-order temporal convergence.
\begin{figure}[htbp]
	\centering
	\includegraphics[width=0.62\textwidth]{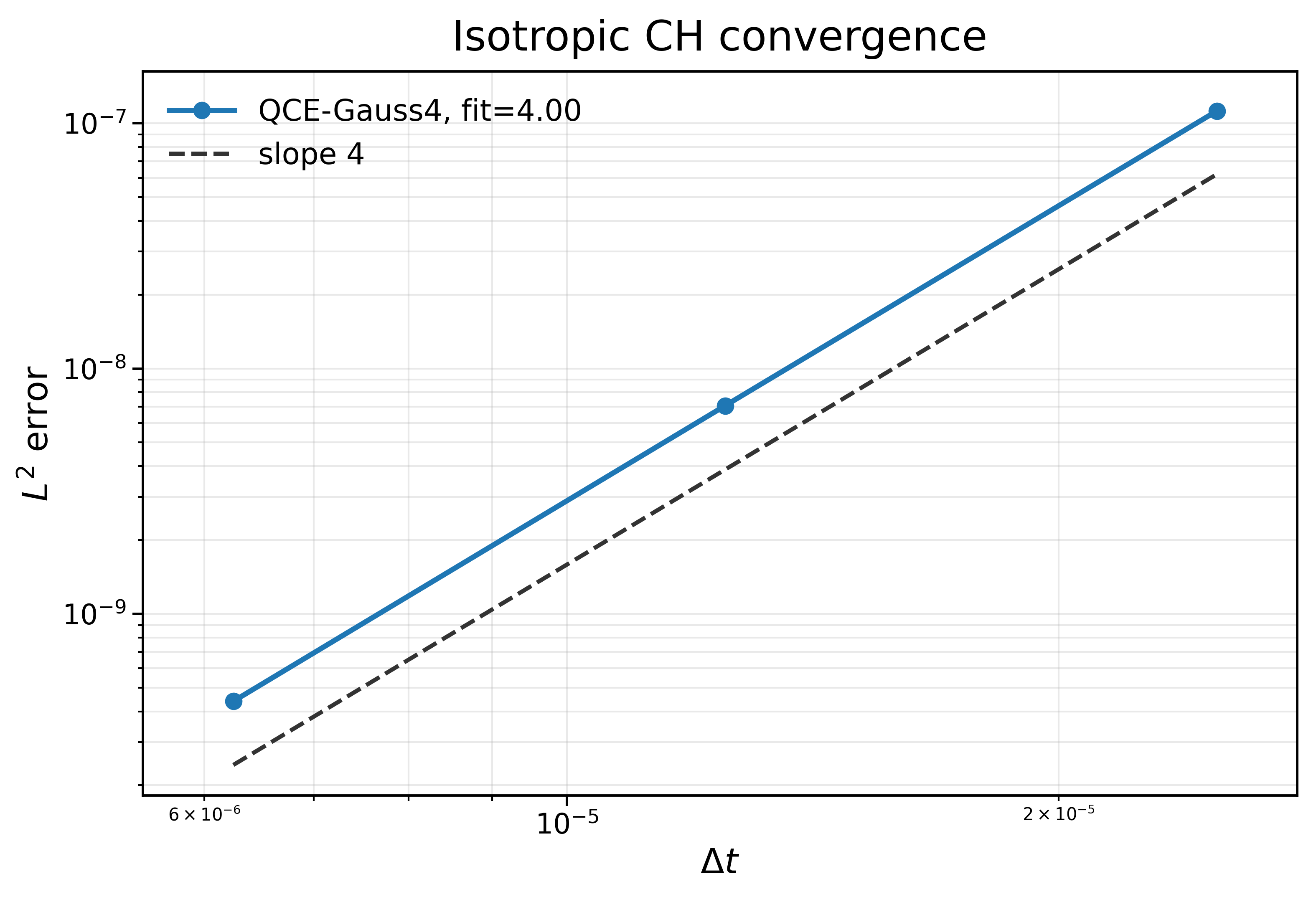}
	\caption{Self-convergence test for the fourth-order AIQ scheme based on the two-stage Gauss--Legendre SRK method.}
	\label{fig:QCE-gauss4-order}
\end{figure}

We next consider the anisotropic CH model~\eqref{eq:CH-ani} and examine the effect of anisotropy on the interfacial dynamics. Unless otherwise specified, we take $\alpha=0.1$ and $\beta=6\times10^{-4}$ in the following experiments.
\paragraph{Single droplet}
We initialize $u$ as a single droplet centered at $(\pi,\pi)$ which is given by
\[
u_0(x,y)=\tanh\!\Bigl(\frac{\sqrt{(x-\pi)^2+(y-\pi)^2}-0.5\pi}{1.2\varepsilon}\Bigr).
\]
We use a time step size $\Delta t=10^{-4}$. Figure~\ref{fig:I-single} shows the phase-field evolution and the corresponding discrete free energy, respectively.

\begin{figure}[htbp]
\centering
\begin{subfigure}[t]{0.55\textwidth}
	\centering
	\includegraphics[width=\textwidth]{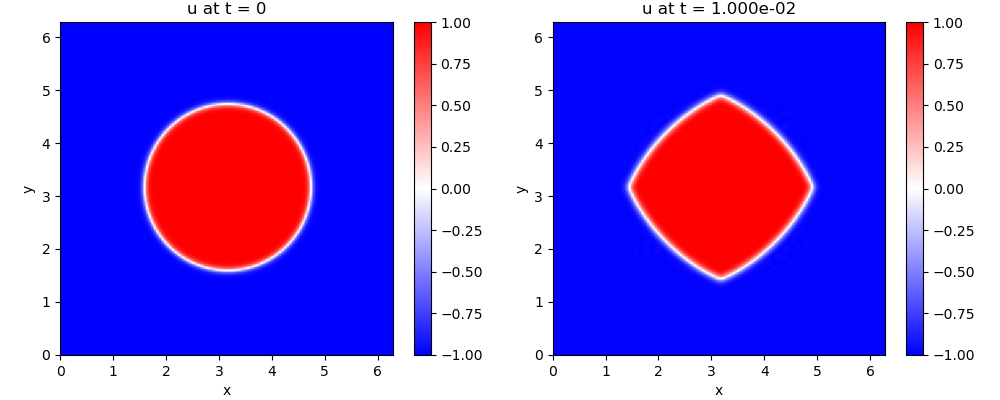}
	\caption{Evolution of the phase-field}
\end{subfigure}
\hfill
\begin{subfigure}[t]{0.40\textwidth}
	\centering
	\includegraphics[width=\textwidth]{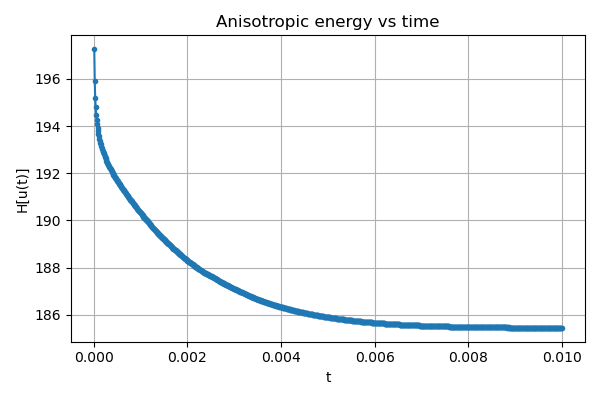}
	\caption{Evolution of discrete energy}
\end{subfigure}
\caption{Single-droplet test: phase-field evolution and discrete energy decay.}
\label{fig:I-single}
\end{figure}

Figure~\ref{fig:I-single} illustrates the evolution of a single droplet. The initially circular interface quickly loses rotational symmetry and develops flat edges and sharp corners. This behavior is expected because the anisotropy function $\Gamma(\theta)$ assigns different interfacial energies to different orientations, so the interface motion favors the directions that minimize the anisotropic surface energy.

Meanwhile, Fig.~\ref{fig:I-single}(b) shows that the discrete free energy decreases monotonically in time. This monotone decay indicates that the proposed scheme preserves the dissipative structure of the CH dynamics.

Under the same initial condition, we compare the proposed AIQ scheme with the S-IEQ scheme in terms of the solution error, original energy evolution, and original-energy error.

\begin{figure}[htbp]
\centering
\begin{subfigure}[t]{0.32\textwidth}
	\centering
	\includegraphics[width=\textwidth]{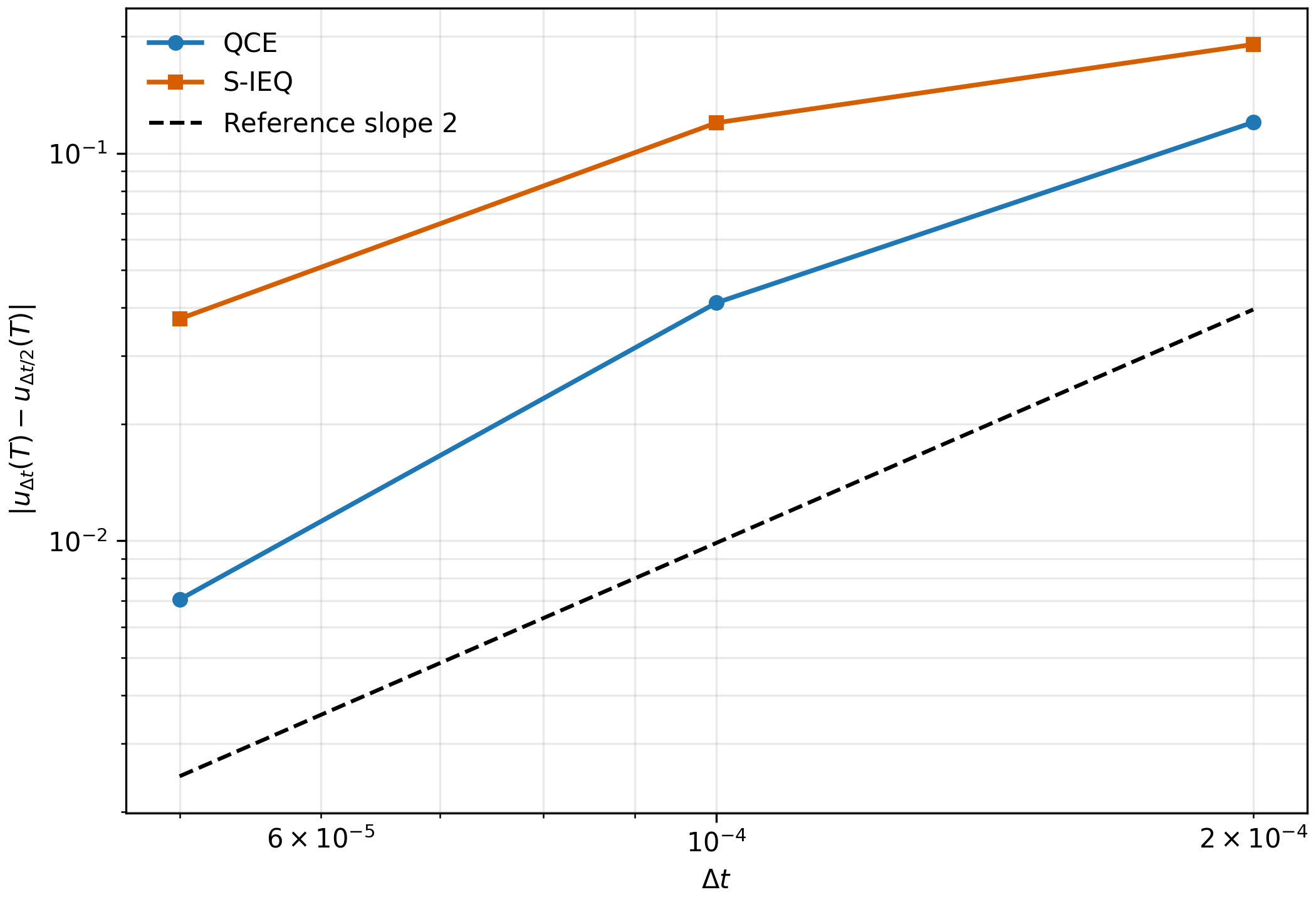}
	\caption{Solution-error}
\end{subfigure}
\hfill
\begin{subfigure}[t]{0.32\textwidth}
	\centering
	\makebox[\textwidth][c]{%
	\begin{tikzpicture}
		\node[inner sep=0pt] (ani-energy) {\includegraphics[width=1.12\textwidth]{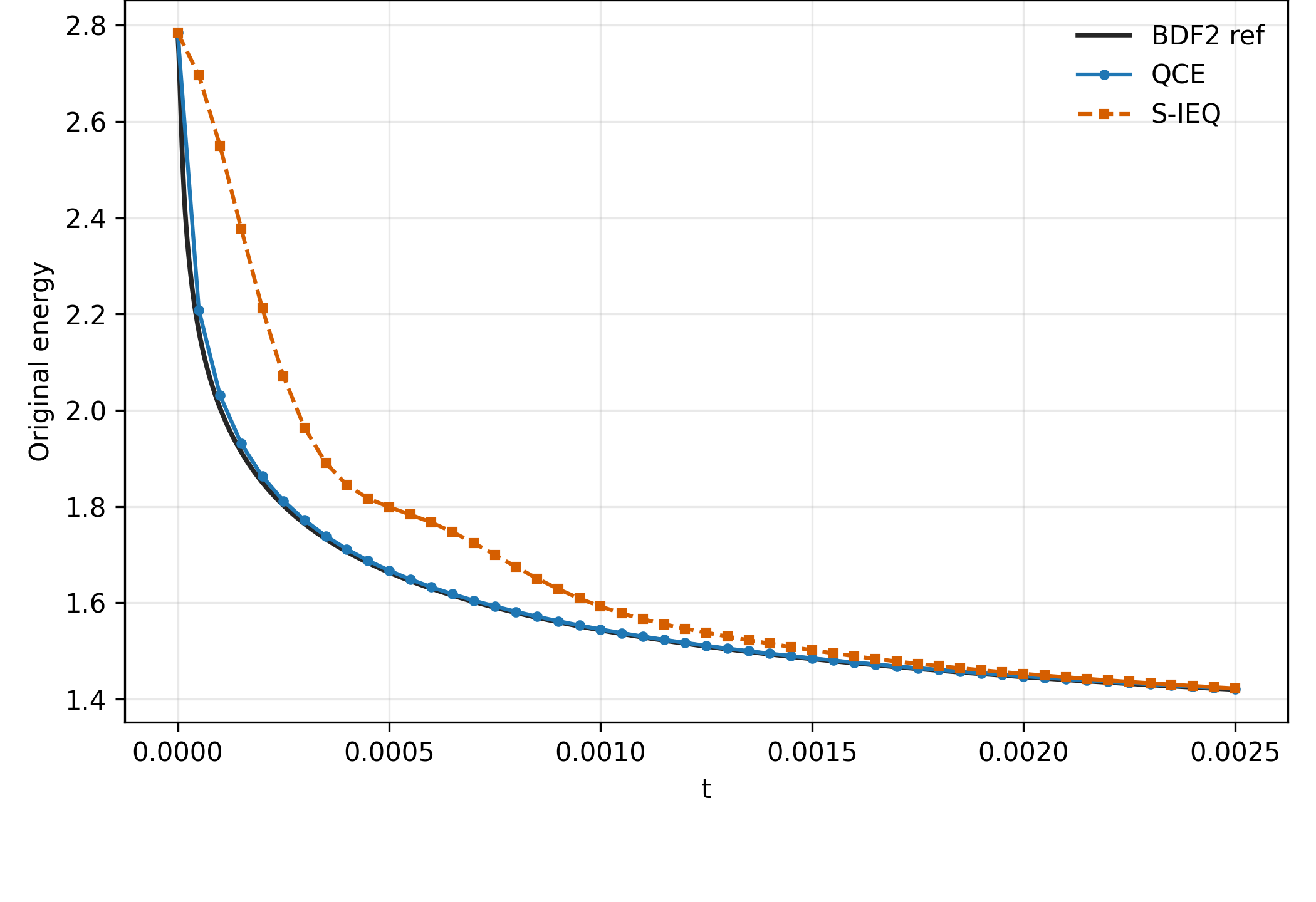}};
		\path[use as bounding box] (ani-energy.south west) rectangle (ani-energy.north east);
		\node[anchor=center,inner sep=0pt,draw=black,line width=0.2pt,fill=white] (ani-energy-zoom)
			at ([xshift=0.23\textwidth,yshift=0.02\textwidth]ani-energy.center)
			{\includegraphics[width=0.38\textwidth]{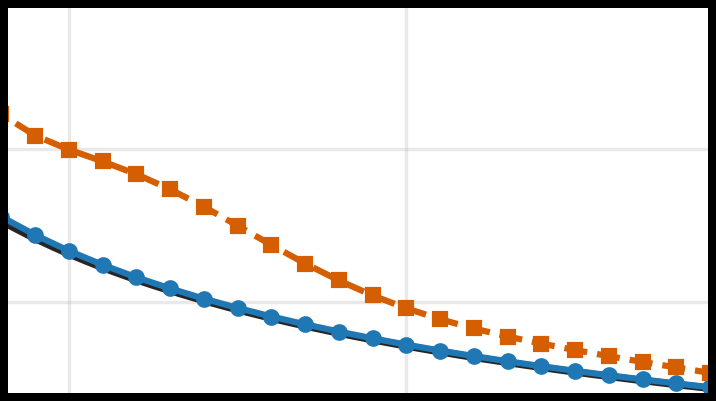}};
		\draw[-{Stealth[length=1.8mm]},line width=0.25pt]
			(ani-energy-zoom.south west) -- ([xshift=-0.04\textwidth,yshift=-0.15\textwidth]ani-energy.center);
	\end{tikzpicture}
	}
	\caption{Original energy}
\end{subfigure}
\hfill
\begin{subfigure}[t]{0.32\textwidth}
	\centering
	\includegraphics[width=\textwidth]{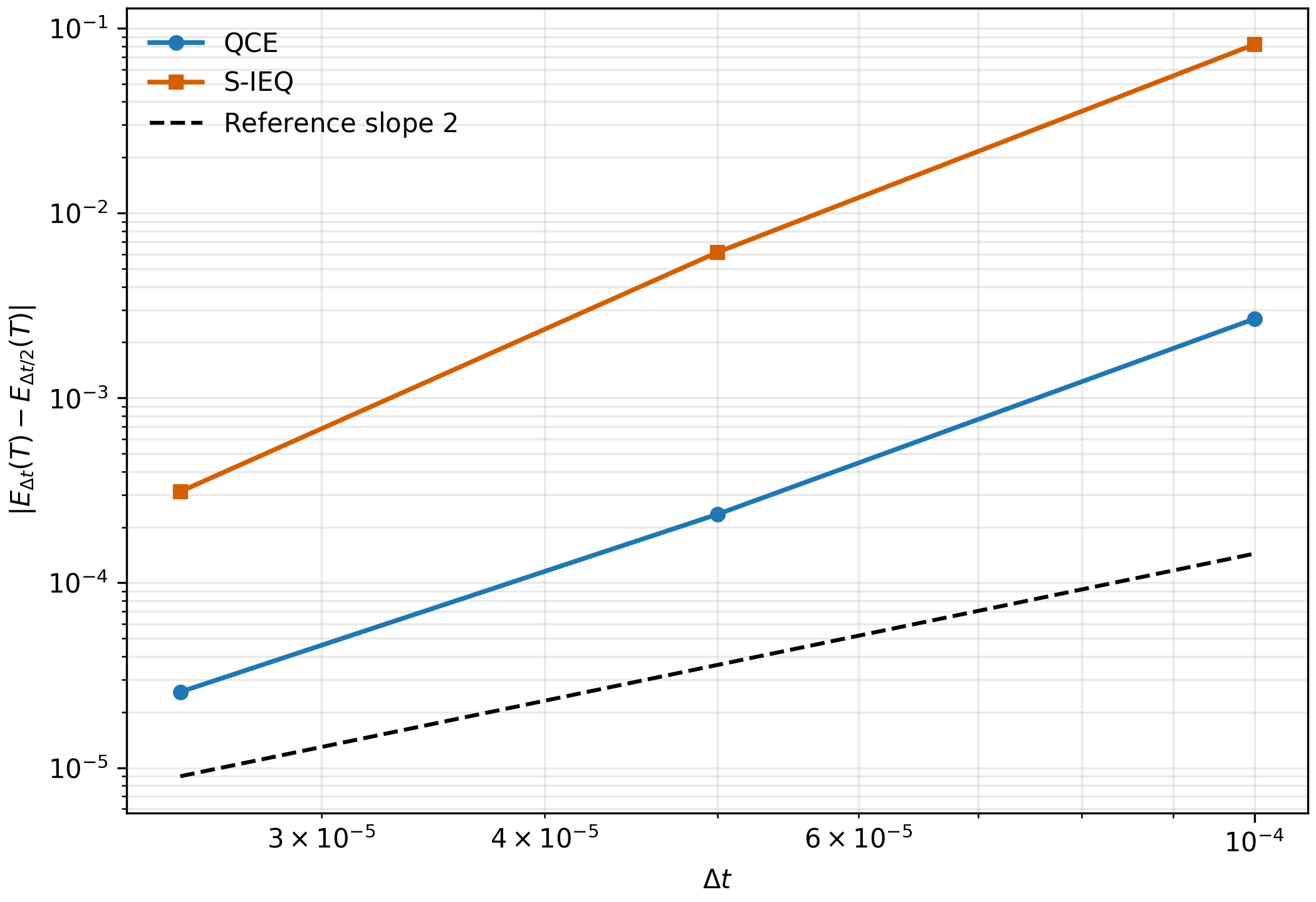}
	\caption{Original-energy error}
\end{subfigure}
\caption{Numerical comparison for the anisotropic CH equation.}
\label{fig:ani-compare}
\end{figure}

Figure~\ref{fig:ani-compare} compares the AIQ and S-IEQ schemes for the anisotropic CH equation in terms of solution-error convergence, original energy dissipation, and original-energy error convergence. Figure~\ref{fig:ani-compare}(a) shows that both schemes follow the reference second-order slope, while AIQ yields smaller solution errors. Figure~\ref{fig:ani-compare}(b) shows that the original discrete energy decreases in time, and the AIQ energy curve stays closer to the reference energy evolution. Figure~\ref{fig:ani-compare}(c) further indicates that AIQ gives smaller original-energy errors than S-IEQ. These results suggest that the AIQ formulation provides a more accurate approximation of the original anisotropic energy structure.
\paragraph{Double droplets}
We consider the following two-droplet initial condition:
\begin{equation}\label{eg:two-droplet}
u_0(x,y)=\max\{u_1(x,y),u_2(x,y)\},
\end{equation}
where \(u_i(x,y)=\tanh\!\left(
\frac{\sqrt{(x-x_i)^2+(y-y_i)^2}-R_i}{1.2\,\varepsilon}
\right)\),
with
\[
(x_1,y_1,R_1)=(0.8\pi,\,1.02\pi,\,0.5\pi),
\qquad
(x_2,y_2,R_2)=(1.57\pi,\,0.98\pi,\,0.2\pi).
\]
The resulting coalescence dynamics and the corresponding energy evolution are
shown in Fig.~\ref{fig:I-double}.

%
\begin{figure}[htbp] \centering \begin{subfigure}[t]{0.62\textwidth} \centering \includegraphics[width=\textwidth]{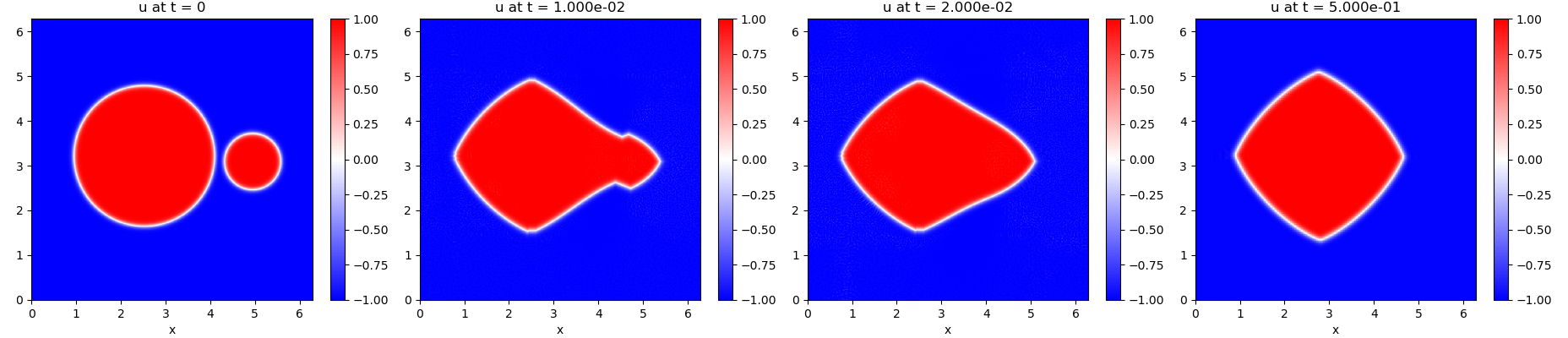} \caption{Evolution of the phase-field} 
\end{subfigure}
\begin{subfigure}[t]{0.34\textwidth} \centering \includegraphics[width=\textwidth]{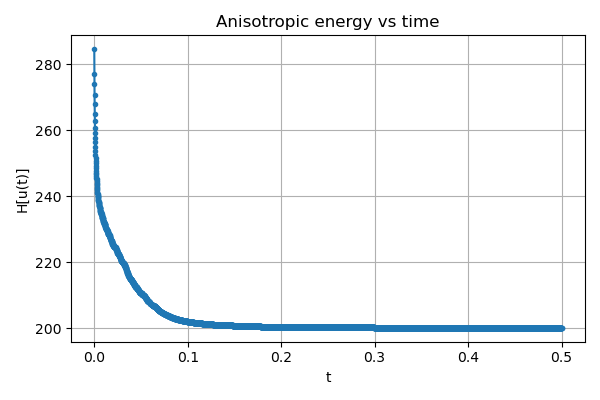} \caption{Evolution of discrete energy.}
\end{subfigure} 
\caption{Double-droplet test: phase-field evolution and discrete energy decay.} \label{fig:I-double} 
\end{figure}
Starting from two circular droplets, the solution rapidly develops straight facets and sharp corners aligned with the fourfold preferred directions prescribed by $\Gamma(\theta)$, and the merged droplet evolves toward a diamond-like Wulff shape. Meanwhile, the total mass remains conserved and the discrete free energy decreases monotonically throughout the simulation. This is consistent with the mass-conservation and energy-dissipation properties of the fully discrete AIQ scheme.

\paragraph{Random initial value}
We next consider a randomly perturbed initial field to test anisotropic pattern selection from nonstructured data. We take

\[
u_0(x,y)=\xi(x,y),
\qquad \xi(x,y)\sim \mathcal U(-1,1).
\]
We need to take a smaller time step $\Delta t=10^{-6}$ due to the stronger nonlinearity induced by the orientation-dependent factor, and the results are shown in
Fig.~\ref{fig:I-rand-phase}.

\begin{figure}[htbp]
\centering
\includegraphics[width=0.8\textwidth]{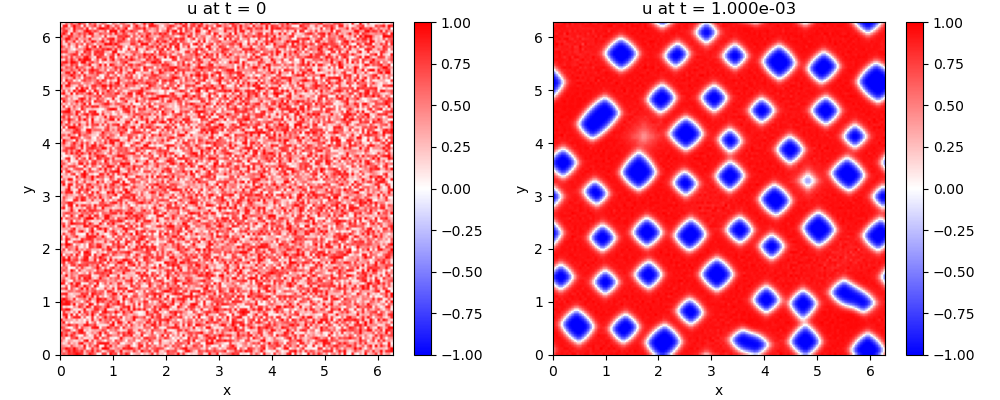}
\caption{Phase-field evolution for the random test. Left: initial condition at $t=0$. Right: solution at $t=0.001$.}
\label{fig:I-rand-phase}
\end{figure}

Fig.~\ref{fig:I-rand-phase} shows that the random perturbations quickly evolve into a fourfold-symmetric pattern, and the interfaces develop diamond-like facets aligned with the preferred orientations prescribed by \(\Gamma(\theta)\).


\section{Concluding remarks}
In this paper, we proposed a quadratic reformulation framework for rational-like energy functions. Based on this framework, we developed the Quadratic Conserving Elevation (AIQ) method by introducing suitable auxiliary variables and applying the implicit midpoint rule to the corresponding extended system. We have applied this method to Cahn--Hilliard (CH) equations with rational-like free-energy terms and proved that the resulting schemes preserve the original energy dissipation law.

We also analyzed the discrete dispersion relation of the proposed schemes and investigated their consistency with the continuous dynamics. Numerical experiments confirmed the expected spinodal decomposition, coarsening behavior, and second-order temporal accuracy. For anisotropic CH models, the method was further shown to capture missing orientations associated with different anisotropic energy functions. Simulations with various initial conditions illustrated phase separation, long-time coarsening dynamics, and anisotropic evolution.

Several directions remain for future work. It would be interesting to extend the original-energy AIQ strategy to logarithmic Flory--Huggins free energies, more general anisotropic surface energies, and anisotropic mobility operators. Another important direction is to improve the computational efficiency of the extended systems. Possible approaches include designing preconditioned iterative solvers for the nonlinear algebraic systems, reducing redundant auxiliary variables, and developing adaptive time-stepping strategies. These techniques may further accelerate the AIQ schemes while retaining their original-energy consistency.

\bibliographystyle{plain}      

\bibliography{references}

\end{document}